\newdimen\proofrulebreadth \proofrulebreadth=.05em
\newdimen\proofdotseparation \proofdotseparation=1.25ex
\newdimen\proofrulebaseline \proofrulebaseline=2ex
\let\then\relax
\def\hfi{\hskip0pt plus.0001fil}
\mathchardef\squigto="3A3B
\newif\ifinsideprooftree\insideprooftreefalse
\newif\ifonleftofproofrule\onleftofproofrulefalse
\newif\ifproofdots\proofdotsfalse
\newif\ifdoubleproof\doubleprooffalse
\let\wereinproofbit\relax
\newdimen\shortenproofleft
\newdimen\shortenproofright
\newdimen\proofbelowshift
\newbox\proofabove
\newbox\proofbelow
\newbox\proofrulename
\def\shiftproofbelow{\let\next\relax\afterassignment\setshiftproofbelow\dimen0 }
\def\shiftproofbelowneg{\def\next{\multiply\dimen0 by-1 }%
\afterassignment\setshiftproofbelow\dimen0 }
\def\setshiftproofbelow{\next\proofbelowshift=\dimen0 }
\def\setproofrulebreadth{\proofrulebreadth}
\def\prooftree{% NESTED ZERO (\ifonleftofproofrule)
%
% first find out whether we're at the left-hand end of a proof rule
\ifnum  \lastpenalty=1
\then   \unpenalty
\else   \onleftofproofrulefalse
\fi
%
% some space on left (except if we're on left, and no infinity for outermost)
\ifonleftofproofrule
\else   \ifinsideprooftree
        \then   \hskip.5em plus1fil
        \fi
\fi
%
% begin our proof tree environment
\bgroup% NESTED ONE (\proofbelow, \proofrulename, \proofabove,
%               \shortenproofleft, \shortenproofright, \proofrulebreadth)
\setbox\proofbelow=\hbox{}\setbox\proofrulename=\hbox{}%
\let\justifies\proofover\let\leadsto\proofoverdots\let\Justifies\proofoverdbl
\let\using\proofusing\let\[\prooftree
\ifinsideprooftree\let\]\endprooftree\fi
\proofdotsfalse\doubleprooffalse
\let\thickness\setproofrulebreadth
\let\shiftright\shiftproofbelow \let\shift\shiftproofbelow
\let\shiftleft\shiftproofbelowneg
\let\ifwasinsideprooftree\ifinsideprooftree
\insideprooftreetrue
%
% now begin to set the top of the rule (definitions local to it)
\setbox\proofabove=\hbox\bgroup$\displaystyle % NESTED TWO
\let\wereinproofbit\prooftree
%
% these local variables will be copied out:
\shortenproofleft=0pt \shortenproofright=0pt \proofbelowshift=0pt
%
% flags to enable inner proof tree to detect if on left:
\onleftofproofruletrue\penalty1
}
\def\eproofbit{% NESTED TWO
%
% various hacks applicable to hypothesis list 
\ifx    \wereinproofbit\prooftree
\then   \ifcase \lastpenalty
        \then   \shortenproofright=0pt  % 0: some other object, no indentation
        \or     \unpenalty\hfil         % 1: empty hypotheses, just glue
        \or     \unpenalty\unskip       % 2: just had a tree, remove glue
        \else   \shortenproofright=0pt  % eh?
        \fi
\fi
%
% pass out crucial values from scope
\global\dimen0=\shortenproofleft
\global\dimen1=\shortenproofright
\global\dimen2=\proofrulebreadth
\global\dimen3=\proofbelowshift
\global\dimen4=\proofdotseparation
\global\count255=\proofdotnumber
%
% end the box
$\egroup  % NESTED ONE
%
% restore the values
\shortenproofleft=\dimen0
\shortenproofright=\dimen1
\proofrulebreadth=\dimen2
\proofbelowshift=\dimen3
\proofdotseparation=\dimen4
\proofdotnumber=\count255
}
\def\proofover{% NESTED TWO
\eproofbit % NESTED ONE
\setbox\proofbelow=\hbox\bgroup % NESTED TWO
\let\wereinproofbit\proofover
$\displaystyle
}%
\def\proofoverdbl{% NESTED TWO
\eproofbit % NESTED ONE
\doubleprooftrue
\setbox\proofbelow=\hbox\bgroup % NESTED TWO
\let\wereinproofbit\proofoverdbl
$\displaystyle
}%
\def\proofoverdots{% NESTED TWO
\eproofbit % NESTED ONE
\proofdotstrue
\setbox\proofbelow=\hbox\bgroup % NESTED TWO
\let\wereinproofbit\proofoverdots
$\displaystyle
}%
\def\proofusing{% NESTED TWO
\eproofbit % NESTED ONE
\setbox\proofrulename=\hbox\bgroup % NESTED TWO
\let\wereinproofbit\proofusing
\kern0.3em$
}
\def\endprooftree{% NESTED TWO
\eproofbit % NESTED ONE
% \dimen0 =     length of proof rule
% \dimen1 =     indentation of conclusion wrt rule
% \dimen2 =     new \shortenproofleft, ie indentation of conclusion
% \dimen3 =     new \shortenproofright, ie
%                space on right of conclusion to end of tree
% \dimen4 =     space on right of conclusion below rule
  \dimen5 =0pt% spread of hypotheses
% \dimen6, \dimen7 = height & depth of rule
%
% length of rule needed by proof above
\dimen0=\wd\proofabove \advance\dimen0-\shortenproofleft
\advance\dimen0-\shortenproofright
%
% amount of spare space below
\dimen1=.5\dimen0 \advance\dimen1-.5\wd\proofbelow
\dimen4=\dimen1
\advance\dimen1\proofbelowshift \advance\dimen4-\proofbelowshift
%
% conclusion sticks out to left of immediate hypotheses
\ifdim  \dimen1<0pt
\then   \advance\shortenproofleft\dimen1
        \advance\dimen0-\dimen1
        \dimen1=0pt
%       now it sticks out to left of tree!
        \ifdim  \shortenproofleft<0pt
        \then   \setbox\proofabove=\hbox{%
                        \kern-\shortenproofleft\unhbox\proofabove}%
                \shortenproofleft=0pt
        \fi
\fi
%
% and to the right
\ifdim  \dimen4<0pt
\then   \advance\shortenproofright\dimen4
        \advance\dimen0-\dimen4
        \dimen4=0pt
\fi
%
% make sure enough space for label
\ifdim  \shortenproofright<\wd\proofrulename
\then   \shortenproofright=\wd\proofrulename
\fi
%
% calculate new indentations
\dimen2=\shortenproofleft \advance\dimen2 by\dimen1
\dimen3=\shortenproofright\advance\dimen3 by\dimen4
%
% make the rule or dots, with name attached
\ifproofdots
\then
        \dimen6=\shortenproofleft \advance\dimen6 .5\dimen0
        \setbox1=\vbox to\proofdotseparation{\vss\hbox{$\cdot$}\vss}%
        \setbox0=\hbox{%
                \advance\dimen6-.5\wd1
                \kern\dimen6
                $\vcenter to\proofdotnumber\proofdotseparation
                        {\leaders\box1\vfill}$%
                \unhbox\proofrulename}%
\else   \dimen6=\fontdimen22\the\textfont2 % height of maths axis
        \dimen7=\dimen6
        \advance\dimen6by.5\proofrulebreadth
        \advance\dimen7by-.5\proofrulebreadth
        \setbox0=\hbox{%
                \kern\shortenproofleft
                \ifdoubleproof
                \then   \hbox to\dimen0{%
                        $\mathsurround0pt\mathord=\mkern-6mu%
                        \cleaders\hbox{$\mkern-2mu=\mkern-2mu$}\hfill
                        \mkern-6mu\mathord=$}%
                \else   \vrule height\dimen6 depth-\dimen7 width\dimen0
                \fi
                \unhbox\proofrulename}%
        \ht0=\dimen6 \dp0=-\dimen7
\fi
%
% set up to centre outermost tree only
\let\doll\relax
\ifwasinsideprooftree
\then   \let\VBOX\vbox
\else   \ifmmode\else$\let\doll=$\fi
        \let\VBOX\vcenter
\fi
% this \vbox or \vcenter is the actual output:
\VBOX   {\baselineskip\proofrulebaseline \lineskip.2ex
        \expandafter\lineskiplimit\ifproofdots0ex\else-0.6ex\fi
        \hbox   spread\dimen5   {\hfi\unhbox\proofabove\hfi}%
        \hbox{\box0}%
        \hbox   {\kern\dimen2 \box\proofbelow}}\doll%
%
% pass new indentations out of scope
\global\dimen2=\dimen2
\global\dimen3=\dimen3
\egroup % NESTED ZERO
\ifonleftofproofrule
\then   \shortenproofleft=\dimen2
\fi
\shortenproofright=\dimen3
%
% some space on right and flag we've just made a tree
\onleftofproofrulefalse
\ifinsideprooftree
\then   \hskip.5em plus 1fil \penalty2
\fi
}
\numberwithin{equation}{section}
\def\@tocline#1#2#3#4#5#6#7{\relax
\ifnum #1>\c@tocdepth % then omit
  \else 
    \par \addpenalty\@secpenalty\addvspace{#2}% 
\begingroup \hyphenpenalty\@M
    \@ifempty{#4}{%
      \@tempdima\csname r@tocindent\number#1\endcsname\relax
 }{%
   \@tempdima#4\relax
 }%
 \parindent\z@ \leftskip#3\relax \advance\leftskip\@tempdima\relax
 \rightskip\@pnumwidth plus4em \parfillskip-\@pnumwidth
 #5\leavevmode\hskip-\@tempdima #6\nobreak\relax
 \ifnum#1<0\hfill\else\dotfill\fi\hbox to\@pnumwidth{\@tocpagenum{#7}}\par
 \nobreak
 \endgroup
  \fi}
\def\noqed{\renewcommand{\qedsymbol}{}}
\newtheoremstyle{mythm}% 
{10pt}% Space above 
{}% Space below 
{\itshape}% Body font 
{}% Indent amount 
{\bfseries}%  Theorem head font 
{.}% Punctuation after theorem head 
{.5em}% Space after theorem head 
{}% 
\newtheoremstyle{mydef}% 
{10pt}% Space above 
{3pt}% Space below 
{}% Body font 
{}% Indent amount 
{\bfseries}%  Theorem head font 
{.}% Punctuation after theorem head 
{.5em}% Space after theorem head 
{}% 
\newtheoremstyle{myrmk}% 
{10pt}% Space above 
{3pt}% Space below 
{}% Body font 
{}% Indent amount 
{\itshape}%  Theorem head font 
{.}% Punctuation after theorem head 
{.5em}% Space after theorem head 
{}% 
\theoremstyle{mythm}
\newtheorem{theorem}{Theorem}[section]
\newtheorem*{theorem*}{Theorem}
\newtheorem{lemma}[theorem]{Lemma} 
\newtheorem{proposition}[theorem]{Proposition} 
\newtheorem{corollary}[theorem]{Corollary}
\theoremstyle{mydef}
\newtheorem{definition}[theorem]{Definition}	
\newtheorem*{definition*}{Definition}	
\theoremstyle{myrmk}
\newtheorem{remark}[theorem]{Remark} 
\newtheorem*{remark*}{Remark} 
\newtheorem*{remarks*}{Remarks}
\newtheorem*{example*}{Example}
\newtheorem*{examples*}{Examples}
\newcommand{\ie}{\text{i.e.\ }}
\newcommand{\myemph}[1]{\textit{#1}}
\newcommand{\by}[1]{\quad&&\text{by {$#1$}}}
\newcommand{\deq}{=}
\newcommand{\defeq}{=_{\mathrm{def}}}
\newcommand{\co}{\,{:}\,}
\newcommand{\type}{\mathsf{type}}
\newcommand{\iso}{\cong}
\newcommand{\ct}{\cdot}
\newcommand{\Hint}{\mathcal{H}}
\newcommand{\Hext}{\mathcal{H}^{\mathrm{ext}}}
\newcommand{\hfiber}{\mathsf{hfiber}}
\newcommand{\iscontr}{\mathsf{iscontr}}
\newcommand{\isprop}{\mathsf{isprop}}
\newcommand{\isequiv}{\mathsf{isequiv}}
\newcommand{\isind}{\mathsf{isind}}
\newcommand{\isbipind}{\mathsf{isind}}
\newcommand{\isalgind}{\mathsf{isind}}
\newcommand{\ishinit}{\mathsf{ishinit}}
\newcommand{\isbiphinit}{\mathsf{ishinit}}
\newcommand{\isalghinit}{\mathsf{ishinit}}
\newcommand{\Hot}{\mathsf{Hot}}
\newcommand{\Eq}{\mathsf{Equiv}}
\newcommand{\ext}{\mathsf{ext}}
\renewcommand{\int}{\mathsf{int}}
\newcommand{\Bool}{\mathsf{Bool}}
\newcommand{\boolind}{\mathsf{boolelim}}
\newcommand{\suc}{\mathsf{succ}}
\newcommand{\Id}{\mathsf{Id}}
\newcommand{\id}[1]{\Id_{#1}}
\newcommand{\refl}{\mathsf{refl}}
\newcommand{\idrec}{\mathsf{J}}
\newcommand{\app}{\mathsf{app}}
\newcommand{\pair}{\mathsf{pair}}
\newcommand{\mysplit}{\mathsf{split}}
\newcommand{\W}{\mathrm{W}}
\newcommand{\wsup}{\mathsf{sup}}
\newcommand{\wind}{\mathsf{elim}}
\newcommand{\U}{\mathsf{U}}
\newcommand{\Bip}{\mathsf{Bip}}
\newcommand{\BipHom}{\mathsf{Bip}}
\newcommand{\BipHot}{\mathsf{BipHot}}
\newcommand{\FibBip}{\mathsf{FibBip}}
\newcommand{\BipSec}{\mathsf{BipSec}}
\newcommand{\isbipequiv}{\mathsf{isbipequiv}}
\newcommand{\BipEquiv}{\mathsf{BipEquiv}}
\newcommand{\elim}{\mathsf{elim}}
\newcommand{\comp}{\mathsf{comp}}
\newcommand{\rec}{\mathsf{rec}}
\newcommand{\Palg}{\mathsf{Alg}}
\renewcommand{\sup}{\mathrm{sup}}
\newcommand{\isalgequiv}{\mathsf{isalgequiv}}
\newcommand{\AlgEquiv}{\mathsf{AlgEquiv}}
\newcommand{\AlgHot}{\mathsf{AlgHot}}
\newcommand{\FibPalg}{\mathsf{FibAlg}}
\newcommand{\PalgSec}{\mathsf{AlgSec}}
\newcommand{\AlgSecHot}{\mathsf{AlgSecHot}}
\begin{document}

\title[]{Homotopy-initial algebras in type theory}
\author[S. Awodey]{STEVE AWODEY}
\address{Steve Awodey, Department of Philosophy 
Carnegie Mellon University 
Pittsburgh, PA  15213, USA}
\email{awodey@cmu.edu}
\author[N. Gambino]{NICOLA GAMBINO}
\address{Nicola Gambino, School of Mathematics, University of Leeds, Leeds LS2 9JT, UK}
\email{n.gambino@leeds.ac.uk}
\author[K. Sojakova]{KRISTINA SOJAKOVA}
\address{Kristina Sojakova, Department of Computer Science, Carnegie Mellon University, Pittsburgh, PA 15213, USA}
\email{kristinas@cmu.edu}
\date{\today}

\begin{abstract}
We investigate inductive types in type theory, using the insights provided by homotopy type theory and univalent foundations of mathematics. We do so by introducing the new notion of a homotopy-initial algebra. This notion is defined by a purely type-theoretic contractibility condition which replaces the standard, category-theoretic universal property 
involving the existence and uniqueness of appropriate morphisms. 
Our main result characterises the types that are equivalent to $\W$-types as homotopy-initial algebras. 
\end{abstract}

\maketitle

\newcommand{\Nat}{\mathbb{N}}
\newcommand{\natrec}{\mathsf{rec}}
\newcommand{\natelim}{\mathsf{elim}}

\section*{Introduction}
Inductive types, such as the type of natural numbers and types of well-founded trees,  are one of the fundamental ingredients of dependent type theories, including  Martin-L\"of's type theories~\cite{NordstromB:marltt} and the Calculus of Inductive Constructions~\cite{BertotY:inttpp,CoquandT:inddt}. In the present work, we investigate inductive types using the
insights provided by homotopy type theory~\cite{hott} and univalent foundations of mathematics~\cite{VoevodskyV:unifc}.

As an introduction to the general problem that we will investigate, let us consider the case of the type of natural numbers. Its elimination rule can be seen as the propositions-as-types translation of the familiar induction principle:

\begin{equation}
\label{equ:natind}
\tag{E}
\begin{prooftree}
x \co \Nat \vdash E(x) \co \type \quad
c \co E(0) \quad
x \co \Nat, y \co E(x) \vdash d(x,y) \co E(\suc(x)) 
\justifies
x \co \Nat \vdash  \natelim(x, c,d) \co E(x) \, .
\end{prooftree} \medskip
\end{equation}
As is well-known, the special case of the rule~\eqref{equ:natind} obtained by considering the dependent type in its premiss to be constant
 provides a counterpart of the familiar principle of defintion of a function by recursion:

\begin{equation}
\label{equ:natrec}
\tag{R}
\begin{prooftree}
A  \co \type \quad
c \co A \quad
y \co A \vdash d(y) \co A
\justifies
x \co \Nat \vdash  \natrec(x, c, d) \co A \, .
\end{prooftree} \medskip
\end{equation}
This rule is closely related to Lawvere's notion of a natural number object in a category \cite{Lawvere:NNO}. Indeed, 
it allows us to define a function $f \co \Nat \to A$ such that the following diagram commutes:
\[
\xymatrix@R=0.4cm@C=1.4cm{
 & \Nat \ar[r]^{\suc}  \ar[dd]^{f}  & \Nat \ar[dd]^{f}  \\
 1 \ar[ur]^{0} \ar[dr]_{c} & & \\ 
  & A \ar[r]_{d}  & \; A \, . }
  \]
Within type theory, the commutativity of the diagram is expressed by judgemental equalities
\[
f(0) =  c \co A \, , \quad x \co \Nat \vdash f(\suc(x)) = d(f(x)) \co A \, ,
\]
which can be proved as a special case of the computation rules for~$\Nat$. 

In the notion of a natural number object, however, one not only requires the existence of such a function~$f$, 
but also its uniqueness. Remarkably, within type theory, it is possible to use the elimination
rule~\eqref{equ:natind} to show that such a function $f$ is unique up to a pointwise propositional equality,
\ie that given another function $g \co \Nat \to A$ making the corresponding diagram commute, there are
propositional equalities $\phi_x  \co \Id_A(fx, gx)$ for every $x \co \Nat$. This suggests the possibility of 
 characterizing inductive types, such as the type of natural numbers, by means of standard
category-theoretic universal properties. Unfortunately, this seems to be possible only in the
presence of additional extensionality principles such as the equality reflection rule (which forces 
propositional equality to coincide with judgemental equality)~\cite{DybjerP:repids,GoguenH:inddtw,MoerdijkI:weltc}.  Without these principles, the uniqueness up to pointwise propositional equality of the functions defined by recursion does not seem to
be sufficient to derive the elimination and computation rules for inductive types. Indeed,
the elimination rules imply not only the existence of pointwise propositional equalities,
as above, but also their essential uniqueness, expressed by a system of higher and higher propositional equalities
whose combinatorics are difficult to axiomatize directly. 

The aim of this paper is to solve this problem using ideas inspired by the
recent connections between type theory, homotopy theory
and higher-dimensional category theory~\cite{AwodeyS:homtmi,vandenBergB:typwg,gambino_garner,ssets,LumsdaineP:weaci}, which are at the core of homotopy type theory~\cite{hott} and Voevodsky's univalent foundations of mathematics programme~\cite{VoevodskyV:notts}. 
Our analysis focuses on well-ordering types (W-types for short),
which can be easily characterized as initial algebras for polynomial functors within extensional type
theories~\cite{AbbottM:concsp,DybjerP:repids,GambinoN:weltdp,MoerdijkI:weltc}. Our
results show that in the system under consideration, a type is equivalent to a W-type if and only if
it is a \emph{homotopy-initial algebra} for a polynomial functor. The notion of homotopy-initial
algebra, which we introduce here, is intended as a generalization of the standard category-theoretic
notion of an initial algebra, obtained by replacing the usual existence and uniqueness requirements
by asking for the \emph{contractibility} of suitable types of algebra morphisms. The notion 
of homotopy-initial algebra is entirely type-theoretic, but it is inspired by ideas of higher-dimensional
category theory, where standard category-theoretic universal properties are generalized using the topological
 notion of contractibility~\cite{LurieJ:higtt}. Although we do not consider the semantics of homotopy-initial
 algebras in
 this paper, we expect it to be given by homotopy-invariant versions of initial
 algebras for polynomial functors (cf.~\cite{vandenBergMoerdijk:Wtypes}). 

As part of our development, we also establish several results that do not have counterparts in the extensional setting.  
For  example, we show how the elements of the identity type between two algebra morphisms are essentially
 type-theoretic counterparts of the notion of an algebra 2-cell~\cite{BlackwellR:twodmt}.
This surprising fact provides further evidence for the idea that the rules for identity types encapsulate 
higher-dimensional categorical structure~\cite{vandenBergB:typwg,LumsdaineP:weaci}. We also
analyze the complexity of the types of proofs that a given type is homotopy-initial, showing that it
is a \emph{mere proposition}, \ie a type of homotopy level 1~\cite{VoevodskyV:unifc}. Finally, we show that, 
under the assumption of Voevodsky's univalence axiom, a version of univalence  also holds for
algebras and that such algebras, when they exist, are
essentially unique, \ie unique up to a contractible type of propositional equalities.  It may be noted that, because of the higher-dimensional structure provided by identity types, polynomial functors may acquire further aspects, not present in the extensional setting 
({cf.}~\cite{KockJ:dattsp}).

Our development can be extended without difficulty to other kinds of inductive types, such as coproducts $A + B$ and the natural numbers  $\Nat$.  In fact, in order to illustrate our ideas,  we begin the paper by considering the simpler case
of the type $\Bool$ of Boolean truth values, establishing analogues of the results proved
later for W-types. 

Some of the results presented here were announced in our extended abstract~\cite{wtypes}, and are summarized in the book \cite{hott}. The present paper expands the material outlined there by including not only all of the omitted proofs (which requires the statement of auxiliary lemmas), but also a new, more algebraic treatment of the elimination and computation rules for inductive types, as well as an analysis of the complexity of the type of proofs that a type is homotopy-initial, and an investigation of the further consequences of the univalence axiom.  
\smallskip

\noindent
\emph{Formalization.} All the results in this paper have been formally verified using the
Coq proof assistant. The formalization files, which build on the existing libraries for homotopy type theory and 
univalent foundations of mathematics, are available from the third author's GitHub repository:
\begin{center}
\url{https://github.com/kristinas/hinitiality}
\end{center}

\smallskip

\noindent
\emph{Organization of the paper.} Section~\ref{sec:bac} reviews all of the  preliminaries necessary to
read the paper and introduces the type theory $\Hint$ which will provide the background theory for
our investigations. The rest of the paper is divided in two parts. The first part considers the type $\Bool$.
We begin in Section~\ref{sec:bip} by defining the notions of a bipointed type, bipointed morphism,
fibered bipointed type, bipointed section analyzing homotopies between morphisms and 
sections in terms of identity types. We also discuss the notion of equivalence between bipointed types.
Section~\ref{sec:homibt} introduces the notions of inductive bipointed type and homotopy-initial
bipointed type, so as to arrive at the main results, characterizing $\Bool$ up to equivalence and
exploring consequences of the univalence axiom. The second part, which comprises
Sections~\ref{section:wfiles} and~\ref{sec:homta}, proceeds in parallel with the 
first part, but with algebras for a polynomial functor instead of bipointed types. 
This second part forms the main contribution of the paper, while the first part provides a simpler setting in which to introduce the new concepts and methods of proof.  The formal structure of the two parts is intentionally parallel, in order to guide the reader through the more difficult, second part.

\section{Homotopy-theoretic concepts in type theory}
\label{sec:bac}

\subsection*{Review of type theory.} The type theories considered in this paper are formulated using the 
following four forms of judgement:
\[
A \co \type \, , \quad A \deq B \co \type \, , \quad   a \co A \, , \quad a \deq b \co A \, . 
\]
We refer to the equality relation in these judgements as \emph{judgemental equality}, 
which should be contrasted with the notion of \emph{propositional equality}
defined below. 
Each kind of judgement can also be made relative to a context of variable declarations $\Gamma$, e.g. $\Gamma \vdash A \co \type$. However, when stating deduction rules we may omit the mention
of a context common to premisses and conclusions of the rule, and we
make use of other standard conventions to simplify the exposition.

We begin by introducing a very basic version of Martin-L\"of's type theory, denoted by~$\mathcal{M}$.
This type theory  has rules for the following forms of type:
\[
(\Sigma x \co A) B(x) \, , \quad 
(\Pi x \co A) B(x) \, , \quad
 \Id_A(a,b) \,  ,  \quad
 \U \, . 
 \]
The rules for these types are recalled in 
Tables~\ref{tab:sigmarules},~\ref{tab:pirules},~\ref{tab:idrules} and~\ref{tab:urules}, respectively. The rules
are as in~\cite{NordstromB:marltt}, except that the rules for the type universe $\U$ are stated 
\`a la Russell  for simplicity. As usual, we refer to an element of the form appearing
in the conclusion of an introduction rule as a \emph{canonical element}.

\begin{table}[htb]
\fbox{\begin{minipage}{14cm}
\[
\begin{prooftree}
x \co A \vdash B(x) \co \type
\justifies
\textstyle
(\Sigma x \co A) B(x) \co \type
\end{prooftree}
\qquad \qquad
\begin{prooftree}
a \co A \qquad 
b(a) \co B(a) 
\justifies
\textstyle
\pair(a,b) \co (\Sigma x \co A) B(x)
\end{prooftree}
\]  \bigskip
\[
\begin{prooftree}
\textstyle
z \co (\Sigma x \co A) B(x) \vdash E(z) \co \type \quad
 x \co A, y \co B(x) \vdash  e(x,y) \co  E(\pair(x,y))  
\justifies
\textstyle
z \co (\Sigma x \co A) B(x) \vdash  \mysplit(z,e) \co  E(z)
\end{prooftree}
\]  \bigskip
\[
\begin{prooftree}
\textstyle
z \co (\Sigma x \co A) B(x) \vdash E(z) \co  \type \quad
 x \co  A, y \co B(x) \vdash  e(x,y) \co  E(\pair(x,y))  
 \justifies
x \co A, y \co B(x) \vdash \mysplit(\pair(x,y),e) = e(x,y) \co  E(\pair(x,y)) 
\end{prooftree} \medskip
\]
\end{minipage}} \smallskip
\caption{Rules for $\Sigma$-types.}
\label{tab:sigmarules} 
\end{table}

\begin{table}[htb]
\fbox{
\begin{minipage}{14cm}
\[
\begin{prooftree}
x \co A \vdash B(x) \co \type
\justifies
\textstyle
(\Pi x \co A) B(x) \co \type
\end{prooftree} \qquad \qquad
\begin{prooftree}
x \co A \vdash b(x) \co B(x) 
\justifies
\textstyle
(\lambda x \co A)b(x) \co (\Pi x \co A)B(x)
\end{prooftree}
\] \medskip 
\[
\begin{prooftree}
\textstyle
f \co (\Pi x \co A) B(x) \quad
a \co A 
\justifies
\app(f, a) \co B(a)
\end{prooftree} \qquad \qquad
\begin{prooftree}
x \co A \vdash b(x) \co B(x) 
\justifies
\app( (\lambda x \co A) b(x), a) = b(a) \co B(a) 
\end{prooftree} \medskip 
\]
\end{minipage}} \medskip
\caption{Rules for $\Pi$-types.}
\label{tab:pirules}

 \end{table}

\begin{table}[htb]
\fbox{
\begin{minipage}{14cm}
\[
\begin{prooftree}
A \co  \type \quad 
a \co  A  \quad
b \co  A 
\justifies
 \id{A}(a,b) \co  \type
 \end{prooftree} \qquad \qquad 
\begin{prooftree}
a \co A 
\justifies
 \refl(a) \co  \id{A}(a,a)
 \end{prooftree} 
\] \bigskip
\[
\begin{prooftree}
x, y \co A, u \co  \id{A}(x,y) \vdash E(x,y,u) \co \type \qquad
 x \co A \vdash  e(x) \co  E(x,x,\refl(x))  
\justifies
x, y \co A, u \co  \id{A}(x,y) \vdash  \idrec(x,y,u,e) \co E(x,y,u)
\end{prooftree}
\] \bigskip
\[
\begin{prooftree}
x, y \co  A, u \co  \id{A}(x,y) \vdash E(x,y,u) \co \type \qquad
 x \co  A \vdash  e(x) \co  E(x,x,\refl(x)) 
 \justifies
x \co A \vdash \idrec(x,x,\refl(x), e) = e(x) \co  E(x, x, \refl(x)) 
\end{prooftree}
\] \smallskip
\end{minipage}} \smallskip
\caption{Rules for $\Id$-types.} 
\label{tab:idrules}
\end{table}

\begin{table}
\fbox{
\begin{minipage}{14cm}
\[
\begin{prooftree}
A \co \U \quad
x \co A \vdash B(x) \co \U 
\justifies
(\Sigma x \co A) B(x) \co \U
\end{prooftree} \qquad
\begin{prooftree}
A \co \U \quad
x \co A \vdash B(x) \co \U 
\justifies
(\Pi x \co A) B(x) \co \U
\end{prooftree}  
\]
\medskip
\[
\begin{prooftree}
A \co \U \quad
a \co A \quad
b \co A 
\justifies
\Id_A(a,b) \co \U
\end{prooftree}  \qquad
\begin{prooftree}
A \co \U 
\justifies
A \co \type
\end{prooftree}
\]
\smallskip
\end{minipage}} \smallskip 
\caption{Rules for the type universe $\U$.}
\label{tab:urules}
\end{table}

Let us establish some notation and recall some
basic facts and  terminogy. First of all, for~$f \co (\Pi x \co A) B(x)$ and $a \co A$, we write~$f(a)$ or~$f  a$ instead of $\app(f,a)$. We may also write~$(a,b)$ instead of $\pair(a,b)$ to denote canonical elements of $\Sigma$-types.
Given types $A$ and~$B$, the product type $A \times B$ and the function $A \rightarrow B$  are defined via $\Sigma$-types and $\Pi$-types in the usual way.  As is standard, we let 
$
A \leftrightarrow B \defeq (A\to B) \times (B\to A)$.
The rules for $\Sigma$-types allow us to derive the rules for projections

\[
\begin{prooftree}
c \co (\Sigma x \co A)B(x) 
\justifies
\pi_1(c) \co A 
\end{prooftree} \qquad
\begin{prooftree}
c \co (\Sigma x \co A)B(x) 
\justifies
\pi_2(c) \co B(\pi_1(c)) \, .
\end{prooftree} \medskip
\]

We say that two elements  $a, b \co A$ are \emph{propositionally equal} if  the type $\Id_A(a,b)$ is inhabited and
write  $a \iso b$ to denote this situation. The rules for $\Sigma$-types allow us to prove the following propositional form of the $\eta$-rule for $\Sigma$-types:
\begin{equation}
\label{equ:etasigma}
\begin{prooftree}
c \co (\Sigma x \co A) B(x)
\justifies
\eta_c \co \Id( c, \pair(\pi_1(c), \pi_2(c))) \, .
\end{prooftree}
\end{equation}
This rule asserts that every element of a $\Sigma$-type is propositionally equal to one of canonical form.
Note that neither $\mathcal{M}$ nor $\Hint$ include the judgemental form of the $\eta$-rules for $\Sigma$-types, 
as is done in~\cite{GoguenH:inddtw}.  The presence of the type universe $\U$ allows us to define the notion of a small type:
as usual, we say that a type  $A$ is \emph{small} if it is an element of  the type universe, \ie $A\co\U$. 

We write $\mathcal{M}^{\mathrm{ext}}$ for the extensional type
theory obtained from $\mathcal{M}$ by adding the following rule, known as the \emph{identity reflection rule}:
\begin{equation}
\label{equ:collapse}
\begin{prooftree}
 p \co  \Id_A(a,b)
  \justifies
  a=b \co  A \, . 
  \end{prooftree} \medskip
\end{equation}
This rule collapses propositional equality to definitional equality, thus making the overall system
somewhat simpler to work with, but makes type-checking undecidable~\cite{HofmannM:extcit}. For this reason, it is not assumed
in the most recent formulations of Martin-L\"of type theories~\cite{NordstromB:marltt} or in automated proof assistants like Coq~\cite{BertotY:inttpp}.  Rather than working in $\mathcal{M}^{\mathrm{ext}}$, we work in a weaker extension of $\mathcal{M}$ which we now describe.

 \subsection*{The type theory $\Hint$}
The type theory $\Hint$ which will serve as the background theory for our development extends the type
theory $\mathcal{M}$ described above with two additional rules. The first additional rule is a judgemental form of the 
$\eta$-rule for $\Pi$-types:
\begin{equation}
\label{equ:etapi}
\begin{prooftree}
\textstyle
f \co (\Pi x \co A) B(x) 
\justifies
\textstyle
f = (\lambda x \co A) \app(f, x) \co  (\Pi x \co A)B(x) \, .
\end{prooftree} 
\end{equation}
An immediate consequence of this rule is that we can identify a family of small types,
given by a dependent type $x \co A \vdash B(x) \co \U$ with functions $B \co A \to \U$.
In the followiing, we shall refer to both of these as \emph{small dependent types}.  
The second additional rule is the \emph{function extensionality} axiom, 
which is considered here with propositional equalities:
 \begin{equation}
 \label{equ:funext}
 \begin{prooftree} 
 f \co (\Pi x \co A)B(x) \qquad
 g \co (\Pi x \co A) B(x) \qquad
 x \co A \vdash \alpha_x \co \Id_{B(x)}(f(x), g(x))
 \justifies
 \mathsf{funext}(f, g, \alpha) \co \Id_{(\Pi x \co A)B(x)}(f,g) \, .
 \end{prooftree}
\end{equation}
 
\medskip

Note that $\Hint$ does not have any ground types apart from the type universe $\U$. This is because these type theories are intended as background
theories for our study of inductive types. The type theory~$\Hint$ 
does not include any global extensionality principles, like the identity reflection rule, the K rule, or 
the uniqueness of identity proofs (UIP) principle~\cite{StreicherT:invitt}. This makes it possible
for~$\Hint$ to have not only straightforward set-theoretic models (where those extensionality
principles are valid), but also with homotopy-theoretic models, such as the groupoid model~\cite{HofmannM:gromtt}
and the simplicial model~\cite{ssets}, in which the rules of~$\Hint$, but not the extensionality principles mentioned above, remain valid. Indeed, $\Hint$ is a subsystem of the type theory 
used in Voevodsky's univalent foundations of Mathematics programme~\cite{VoevodskyV:unifc}. 
In particular, the 
function extensionality axiom in~\eqref{equ:funext} is formally implied by the univalence axiom~\cite{VoevodskyV:notts} (using the fact that function extensionality, as stated in~\eqref{equ:funext}, follows from its special case for function types). But, in contrast with the univalence
axiom, the function extensionality axiom is valid also in set-theoretic models. Uses of the univalence axiom will be explicitely noted.

We write~$\Hext$ for the  extension of~$\Hint$ with the identity reflection rule in~\eqref{equ:collapse}.

\begin{remark*} Our results continue to hold when the judgemental $\eta$-rule for $\Pi$-types in~\eqref{equ:etapi}
is weakened by replacing the judgemental equality in its conclusion with a propositional one, which is
derivable if $\Pi$-types are defined as inductive types, as done in~\cite{NordstromB:promlt}. However, since
some of our proofs can be  simplified in its presence and the
current version of the Coq proof assistant assumes the rule~\eqref{equ:etapi}, 
we prefer to work with it in order to keep our presentation simpler and 
more faithful to the formalization.
\end{remark*}

\subsection*{Homotopy-theoretic notions in type theory} For the convenience of the reader, 
we review some ideas developed in more detail in~\cite{hott,VoevodskyV:notts}. 
First of all, we will frequently refer to elements of identity types of the form 
$p \co \Id_A(a,b)$ as \emph{paths} (from $a$ to $b$ in $A$). By the $\Id$-elimination rules, 
for every dependent type
\begin{equation}
\label{equ:dependenttype}
x\co A \vdash E(x) \co \type \, ,
\end{equation} 
a path $p\co \Id_A(a,b)$ determines the so-called \emph{transport functions} 
\[
p_{\, ! } \co E(a) \rightarrow E(b) \, , \quad p^* \co E(b) \to E(a) \, .
\] 
These are defined so that, for $x \co A$, the functions $\refl(x)_{\, !}$ 
and~$\refl(x)^*$ are the identity function $1_{E(x)} \co E(x) \to E(x)$.  In order to emphasize the fact
that dependent types are interpreted as fibrations in homotopy-theoretic models, we sometimes refer to a dependent
type as in~\eqref{equ:dependenttype} as a \emph{fibered type} over $A$. Accordingly, elements of the 
type~$(\Pi x \co A)E(x)$ may be referred to as~\emph{sections} of the fibered type. This terminology is supported by the fact that a section $f \co (\Pi x \co A)E(x)$ determines a function $f' \co A \to E'$, where $E' \defeq (\Sigma x \co A)E(x)$, defined by
$f' \defeq (\lambda x \co A) \pair(x, fx)$, which is such that $ \pi_1 f'(x) = x$ for every $x \co A$.  We represent such a situation with the diagram
  \[
   \xymatrix{
    E' \ar[d]_{\pi_1} \\
    \;  A \, . \ar@/_1pc/[u]_{f'} }
     \]

Let us now review the notion of an equivalence of types. In order to do this, we need some auxiliary notions. Recall that a type $A$ is said to be \emph{contractible} if the  type 
 \begin{equation}
 \label{eq:contractible}
\iscontr(A) \defeq (\Sigma x\co A) (\Pi y\co A) \Id_A(x,y)
\end{equation}
is inhabited. The type $\iscontr(A)$ can be seen as the propositions-as-types translation
of the formula stating that $A$ has a unique element. However, its homotopical interpretation 
is as a space that is inhabited if and only if the space interpreting $A$ is contractible in the usual
topological sense. Next, we define the \emph{homotopy fiber} of a function $f \co A \to B$ over $y \co B$ as the type
\[
 \hfiber(f,y) \defeq (\Sigma x \co A)\, \Id_B(f x, y) \, .
\]
A function $f \co A \to B$ is then  said to be an \emph{equivalence} if and only if all of its homotopy fibers are contractible, \ie the type
\[
\isequiv(f) \defeq (\Pi y \co B) \, \iscontr(\hfiber(f,y)) 
\]
is inhabited.  For types $A$ and $B$, the type $\Eq(A,B)$ of equivalences from $A$ to $B$ is  defined so that its canonical elements are pairs consisting of a function $f \co A \to B$ and a proof that it is an equivalence, \ie we let 
 \begin{equation}
 \label{equ:weq}
 \Eq(A,B) \defeq (\Sigma f \co A \to B) \, \isequiv(f) \, .
 \end{equation}
 We write $A\simeq B$ if there is an equivalence from $A$ to $B$. For example, the well-known $\Pi\Sigma$-distributivity, which is sometimes referred to as the \emph{type-theoretic axiom of choice}~\cite{MartinLofP:inttt}, can be expressed as an equivalence 
 \begin{equation}
 \label{equ:ac}
 (\Pi x \co A)(\Sigma y \co B(x)) E(x,y) \simeq 
 (\Sigma u  \co (\Pi x \co A)B(x)) (\Pi x \co A) E(x, ux) \, .
 \end{equation}
 It can be shown that a function $f \co A \rightarrow B$ is an equivalence if and only if it has a two-sided inverse,
 \ie there exists a function $g \co B \to A$ such that the types $\Id(gf, 1_A)$
 and $\Id(fg, 1_B)$ are inhabited. However, the type of equivalences is not equivalent to
 the type of functions with a two-sided inverse as above, but instead (as suggested by Andr\'e Joyal) 
to the type of functions that have a left inverse and a right inverse, \ie functions
$g \co B \to A$ and $h \co B \to A$ such that the  types $ \Id(g  f, 1_A)$ and $\Id(f h, 1_B)$ are inhabited. 
More precisely, for every $f \co A \to B$, there is an equivalence
\begin{equation}
\label{equ:equivalternative}
\isequiv(f) \simeq 
 \big( 
 (\Sigma g \co B \to A) \Id(gf, 1_A) \times (\Sigma h \co B \to A) \Id(fh, 1_B) 
 \big) \, .
 \end{equation}
For our purposes, the idea of equivalences as functions with a left and a right inverse will be most
easily generalized when we consider types equipped with additional structure.

Because of the presence of the principle of function extensionality in $\Hint$, 
identity types of function types and of $\Pi$-types admit an equivalent description in terms of the notion of
a homotopy, which we now review.  For $f \, , g \co (\Pi x \co A) B(x)$, the type of homotopies between $f$ and $g$ 
is defined by letting
\[
\Hot(f,g) \defeq (\Pi x:A) \Id_{B(x)}(fx,gx) \, .
\]
We sometimes write  $\alpha \co f \sim g$ rather than $\alpha \co \Hot(f,g)$. 

One of the key insights derived from the homotopy-theoretic interpretation of type theories is that
the notion of contractibility in~\eqref{eq:contractible}  can be used to articulate the world of types  
into a hierarchy of so-called \emph{homotopy levels} (or h-levels for short) according to their homotopical complexity \cite{VoevodskyV:notts}. These
are defined inductively by saying that a type $A$ has level $0$ if it is contractible and it has level
$n+1$ if for every $x, y \co A$ the type $\Id_A(x,y)$ has level $n$. Types of h-level 1 are called 
here \emph{mere propositions}. By definition, a type $A$ is said to be a \emph{mere proposition} if the type
\[
\isprop(A) \defeq (\Pi x \co A)(\Pi  y \co A) \, \iscontr( \Id_A(x,y)) 
\]
is inhabited.

\subsection*{Characterization of identity types} \label{sec:chait} We now recall that the identity types of various kinds of 
compound types admit an equivalent description. We begin by considering product types and function types.
Let $A$ and $B$ be types. For any  $c, d  \co A \times B$, and any $f, g \co A \to B$, we have  canonical maps
\begin{align*} 
\ext^{\times}_{c,d}  \co & \Id_{A \times B}(c, d) \to \Id_{A}(\pi_1 c, \pi_1 d) \times \Id_{B}(\pi_2 c, \pi_2 d) \, , \\
\ext^{\to}_{f,g} \co & \Id_{A \to B}(f, g) \to (\Pi x \co A) \Id_B(fx, gx) \, .
\intertext{Note that the codomain of the second map is $\Hot(f,g)$. These functions can be easily generalized to $\Sigma$-types and $\Pi$-types, so as to obtain functions}
\ext^\Sigma_{c,d} \co &  \Id_{(\Sigma x \co A)B(x)}(c, d) \to (\Sigma p \co \Id_A( \pi_1 c, \pi_1 d)) \,  \Id_{B(\pi_2 d)} ( p_{!}( \pi_2 c), \pi_2 d) \, , \\ 
\ext^\Pi_{f,g} \co & \Id_{(\Pi x \co A)B(x)}(f, g) \to (\Pi x \co A) \Id_{B(x)}(f x, gx)  \, . \\ 
\intertext{Again, the codomain of the second map is $\Hot(f,g)$. Furthermore, for the type universe $\U$, there is an evident function}
\ext^\U_{A,B} \co & \Id_\U(A,B) \to \Eq(A,B) \, .
\end{align*}
We refer to these functions as the \emph{extension functions} for product types, function types, $\Sigma$-types,
$\Pi$-types and $\U$, respectively.  We then have that the extension functions for product types and $\Sigma$-types 
can be shown to be equivalences within the type theory~$\mathcal{M}$, using the (provable) $\eta$-rule for $\Sigma$-types in~\eqref{equ:etasigma}.
The extension functions for function types and $\Pi$-types, for their part, can be shown to be equivalences within the type theory $\Hint$, using the function extensionality principle in~\eqref{equ:funext} that is part of $\Hint$. Finally, the assertion that the extension function for the type universe is an equivalence is exactly the univalence axiom. Thus, within the type theory $\Hint$ we have the following inverses to the
extension functions
\begin{align*}
\int^{\times}_{c,d} \co & \big( \Id_A(\pi_1 c \, , \pi_1 d) \times \Id_B(\pi_2 c,\pi_2 d) \big) \to 
\Id_{A \times B}(c,d)   \\
\int^{\to}_{f,g} \co  & \big( (\Pi x \co A) \Id_B (fx, gx) \big)) \to \Id_{A \to B}(f, g) \\ 
\int^{\Sigma}_{c,d} \co & \big( (\Sigma p \co \Id_A( \pi_1 c, \pi_1 d )) \Id_{B(\pi_2 c)}( p_{!} \pi_2 c, 
\pi_2 d) \big) \to \Id_{(\Sigma x \co A)B(x)}(c, d)  \\
\int^{\Pi}_{f,g} \co & (\Pi x \co A) \Id_{B(x)}(f x, gx) \to  \Id_{(\Pi x \co A)B(x)}(f, g)  \, ,\\ 
 \intertext{and, in the extension of $\Hint$ with the univalence axiom, also the inverse}  
\int^{\U}_{A,B} \co & \Id_\U(A, B) \to  \Eq(A, B)   \, . 
 \end{align*}
In the following, if the context does not create any confusion, we may omit  superscripts and subscripts when manipulating these functions, writing simply $\ext$ and $\int$. Let us also remark that for $\Sigma$-types we could have also used $p^*$ instead
of $p_{!}$, making the evident changes. In the following, we shall use both, depending on which is more convenient.

\subsection*{Higher-dimensional categorical structure} Even if our development is entirely syntactic,
many of the ideas presented in the paper are inspired by concepts of homotopy theory and higher-dimensional
algebra. Therefore, we conclude this preliminary section by  discussing some aspects of the relationship with higher-dimensional category theory, so as to provide further insight into our development.

 First of all, observe that types and functions can be organized into an ordinary 
 category, where the composition and identity laws hold as judgemental equalities. Indeed, if we define the composite $g \circ f \co A 
\to C$ of $f \co A \to B$ and $g \co B \to C$ by letting
\[
 g \circ f \defeq (\lambda x \co A) g ( f  x) \, ,
 \]
 and the identity $1_A \co A \to A$ by letting $1_A \defeq (\lambda x \co A) x$, 
 the presence of the judgemental $\eta$-rule for $\Pi$-types in~\eqref{equ:etapi} in $\Hint$ implies that
 we have judgemental equalities
 \begin{equation}
 \label{equ:assoc}
  h \circ (g \circ f) = (h \circ g) \circ f \, , \quad 1_B \circ f =  f \, , \quad  f \circ 1_A = f \, .
  \end{equation}
  Because of the strict associativity, we may omit bracketing of multiple composites and sometimes write simply $gf$ instead of $g \circ f$. 

The presence of identity types in our type theories, however, equips this category with additional structure. Each  
type~$A$ is a weak $\infty$-groupoid, having  elements of~$A$ as objects, paths
$p \co \Id_A(a,b)$ as 1-morphisms (from~$a$ to~$b$) and elements of iterated identity types as 
$n$-morphisms~\cite{LumsdaineP:weaci,vandenBergB:typwg}. 
We may write 
\[
q \ct p \co \Id_A(a,c) \, , \quad
1_a \co \Id_A(a,a) \, , \quad 
p^{-1} \co \Id_A(b,a) \, ,
\]
for the path obtained by composing $p \co \Id_A(a,b)$ and~$q \co \Id_A(a,c)$,  for the path~$\refl(a) \co \Id_A(a,a)$, and for the quasi-inverse of $p \co \Id_A(a,b)$, respectively~\cite{HofmannM:gromtt}. 
When manipulating this structure, we  refer to the  propositional equalities holding between 
various composites as the \emph{groupoid laws}.

The category of types and functions can then be considered informally as enriched in $\infty$-groupoids
(and hence as an $(\infty, 1)$-category\footnote{We follow convention of using $(\infty, n)$-category to denote an $\infty$-category in which $k$-morphisms, for $k > n$, are invertible. An $\infty$-groupoid is then the same thing as an $(\infty,0)$-category.}), since function types  $A \to B$, just like any other type,
are $\infty$-groupoids. This $(\infty, 1)$-category has types as objects,
functions as 1-morphisms, paths $p \co \Id_{A \to B}(f, g)$ as 2-morphisms, and higher paths as $n$-morphisms. We will not need all the structure of this higher-dimensional category (for which see \cite{Lumsdaine:higcft}), but only some low-dimensional layers of it which can be defined easily.  For example, given functions $f \co A \to B$, $g_1, g_2 \co B \to C$ and a path~$p \co \Id_{B \to C}(g_1, g_2)$, represented diagrammatically as 
\[
\xymatrix{
A \ar[r]^{f} & B \ar@/^1pc/[r]^{g_1} \ar@/_1pc/[r]_{g_2} \ar@{}[r]|{\Downarrow \, p}  & C \, ,}
\]
it is possible to define a path~$p \circ f \co \Id_{A \to C} (g_1 \circ f,  g_2 \circ f)$. 

Because of the equivalences $\Id_{A \to B}(f,g) \simeq \Hot(f,g)$ recalled above, this $(\infty,1)$-category can be described equivalently
as having  types as objects, functions as 1-morphisms, homotopies $\alpha \co \Hot(f, g)$ as 2-morphisms, and higher homotopies as $n$-morphisms. For example, given functions $f \co A \to B$, $g_1, g_2 \co B \to C$ and a homotopy $\alpha \co \Id(g_1, g_2)$, there is a homotopy~$\alpha \circ f \co \Hot(g_1 \circ f, g_2 \circ f)$ which is defined so that, for every $p \co \Id_{B \to C}(g_1, g_2)$, the homotopies~$\ext( p \circ f )$ and~$\ext(p) \circ f$ are propositionally equal, where $\ext$ denotes the extension function for function types.

\section{Bipointed types}
\label{sec:bip}

\subsection*{Bipointed types and bipointed morphisms} \label{sec:biptm}
In this section and the next, we focus on the type $\Bool$ of Boolean truth values. Our development in
these sections provides a template for what we will do for $\W$-types in Section~\ref{section:wfiles}
and Section~\ref{sec:homta} and allows us to present the key ideas in a simpler context. 

The rules for the type $\Bool$ that we consider here are given in Table~\ref{tab:boolrules}. 
The introduction rules state that we have two canonical elements in $\Bool$, written~$0$ and~$1$ here. The elimination rule can be understood as the propositions-as-types translation of an induction principle for $\Bool$. Finally, the computation rules specify what happens if one applies the elimination rule immediately after applying the introduction rule.

\begin{table}[htb]
\fbox{
\begin{minipage}{14cm}
\[
 \Bool \co \type  \qquad \qquad 
0 \co \Bool  \qquad  1 \co \Bool  \qquad \qquad \Bool \co \U 
\]  
\smallskip
\[
\begin{prooftree}
x\in\Bool \vdash E(x) \co \type \qquad
e_0 \co E(0) \qquad
e_1 \co E(1) 
\justifies
x \co \Bool \vdash \boolind(x, e_0, e_1) \co E(x) 
\end{prooftree}
\] \smallskip

\begin{equation*}
\begin{prooftree}
x\in\Bool \vdash E(x) \co \type \qquad
e_0 \co E(0) \qquad
e_1 \co E(1)
\justifies
  \boolind(0, e_0, e_1)  \deq  e_0 \co E(0) \, , 
\end{prooftree}
 \end{equation*}  
 \bigskip
 \begin{equation*}
\begin{prooftree}
x\in\Bool \vdash E(x) \co \type \qquad
e_0 \co E(0) \qquad
e_1 \co E(1)
\justifies
 \boolind(1,e_0,e_1)  \deq e_1 \co E(1) 
\end{prooftree} \bigskip
 \end{equation*}
 \medskip
 \end{minipage}} \smallskip
 \caption{Rules for the type of Boolean truth values.}
 \label{tab:boolrules}
 \end{table}
 
Let us now suppose that we have a small type $A \co \U$ and an equivalence $f \co \Bool \to A$. Then, the type~$A$ 
has two distinguished elements $a_0 \defeq f(0)$ and $a_1 \defeq f(1)$, and it satisfies
analogues of the elimination and computation rules for $\Bool$, except that the conclusions of the computation 
rules need to be modified  by replacing the judgemental equalities  with propositional ones. Our aim in this section is to provide a characterisation of the small types equivalent to $\Bool$ by means of a type-theoretical universal property. But in our development we do not need to assume to have the type $\Bool$, and rather work in the type 
theory~$\Hint$ specified in Section~\ref{sec:bac}.  We begin by introducing the notion of a bipointed type.

\begin{definition} \label{thm:bipointedtype}
A \emph{bipointed type} $(A, a_0, a_1)$ is a type $A$ equipped with two elements  $a_0 \, , a_1 \co A$. 
\end{definition}

When referring to a bipointed type we sometimes suppress mention of its distinguished elements and write $A = (A, a_0, a_1)$ to recall this abuse of language. Similar conventions will be used throughout the paper for other kinds of
structures. In the following, it will be convenient to represent a bipointed type~$A$  
diagrammatically as follows:
\[
\xymatrix{
1 \ar[r]^-{a_0}&  A & 1 \ar[l]_-{a_1} \, .}
 \]
Here, the symbol $1$ does not represent the unit type, which is not assumed as part the type theory $\Hint$.
The type $\Bool$ and its canonical elements $0, 1 \co \Bool$ give us a bipointed type:
\[
\xymatrix{
 1 \ar[r]^-{0}&  \Bool  & 1 \ar[l]_-{1} \, . }
 \]
We say that a bipointed type $A = (A, a_0, a_1)$ is \emph{small} if the type $A$ is a small type, \ie $A \co \U$. 
Accordingly, the type of small bipointed types (which is not small) is then defined by letting 
\[
\Bip \defeq (\Sigma A \co \U)( A \times A ) \, .
\]
Next, we introduce the notion of a bipointed morphism between bipointed types. 
As one might imagine, a bipointed morphism consists of a function between the underlying types which preserves
the bipointed structure. In our context, we formalize this by requiring the existence of appropriate paths, witnessing  
the preservation of structure, as the next definition makes precise. 

\medskip

Let us fix two bipointed types $A = (A, a_0, a_1)$ and~$B = (B, b_0, b_1)$.

\begin{definition} A \emph{bipointed morphism} $(f, \bar{f}_0, \bar{f}_1)  \co A  \to B$
is a function $f \co A \to B$ equipped with paths $\bar{f}_0 \co  \Id(f a_0, b_0)$ 
and~$\bar{f}_1 \co \Id(f a_1, b_1)$.  
\end{definition}

Diagrammatically, we represent a bipointed morphism as follows:
\begin{equation}
\label{equ:bipmor}
\vcenter{\hbox{\xymatrix@C=2cm{
1 \ar[r]   \ar[r]^{a_0} \ar@{=}[d]  \ar@{}[dr]|{\Downarrow \, \bar{f}_0} & A  \ar[d]^{f} & 1  \ar[l]_{a_1} \ar@{=}[d] \ar@{}[dl]|{\Downarrow \,  \bar{f}_1} \\
1 \ar[r]_{b_0}  & B   & \; 1 \, . \ar[l]^{b_1} }}}
 \end{equation}
The type of bipointed morphisms from $A$ to $B$ is then defined by letting
\[
\BipHom(A,B) \defeq (\Sigma f \co A \to B) \big( \Id(  f a_0, b_0 )  \times \Id(  f a_1 , b_1 )  \big) \, .
\]
 Bipointed types and their morphisms behave much like objects and morphisms in a category.
Given two bipointed morphisms  $(f, \bar{f}_0, \bar{f}_1) \co A \to B$ and $(g, \bar{g}_0, \bar{g}_1) \co B \to C$, we can define their composite 
 as the triple consisting of the composite $g  f \co A \to C$ and the paths represented
by the following pasting diagram:
\[
\xymatrix@C=2cm@R=1.2cm{
1 \ar[r]   \ar[r]^{a_0} \ar@{=}[d]  \ar@{}[dr]|{\Downarrow \, \bar{f}_0} & A  \ar[d]^{f} & 1  \ar[l]_{a_1} \ar@{=}[d] \ar@{}[dl]|{\Downarrow \,  \bar{f}_1} \\
1 \ar[r]_{b_0}   \ar@{}[dr]|{\Downarrow \, \bar{g}_0}  \ar@{=}[d] & B \ar[d]^f   & 1 \ar[l]^{b_1} \ar@{=}[d] \ar@{}[dl]|{\Downarrow \,  \bar{g}_1} \\
1 \ar[r]_{c_0}  & C   & \; 1 \, .\ar[l]^{c_1}}
 \]
Explicitly, for $k \in \{ 0, 1 \}$, the path $\overline{(g  f)}_k \co \Id( g f a_k, c_k)$ is obtained as the composite
\[
\xymatrix@C=1.5cm{
g f a_k \ar[r]^{g \circ \bar{f}_k} & g b_k \ar[r]^{\bar{g}_k} & c_k \, .}
\]
Also, for any bipointed type $A = (A, a_0, a_1)$, the identity function $1_A \co A \to A$ can be equipped with the structure of a bipointed by taking $\overline{(1_A)}_k \co \Id( 1_A(a_k), a_k)$ to be $1_{a_k} = \refl(a_k) \co \Id(a_k, a_k)$ for $k \in \{ 0, 1\}$. We represent this as the
diagram
 \begin{equation}
 \label{equ:bipidA}
{\vcenter{\hbox{\xymatrix@C=2cm{
1 \ar[r]^{a_0} \ar@{=}[d] \ar@{}[dr]|{\Downarrow \, 1_{a_0}}& A \ar[d]^{1_A} & 
1 \ar[l]_{a_1} \ar@{=}[d]  \ar@{}[dl]|{\Downarrow \, 1_{a_1}}  \\ 
 1 \ar[r]_{a_0} & A & \; 1 .\ar[l]^{a_1} }}}}
 \end{equation}
 Note that, even if associativity and unit laws for composition for functions between types hold strictly
 (\ie up to judgemental equality, {cf.}~\eqref{equ:assoc}), the  associativity and unit laws for bipointed morphisms do not. This
 is due to the presence of paths in their definition, in complete analogy with the well-known situation
 in homotopy theory~\cite{BoardmanM:homiast}.

We have seen in Section~\ref{sec:chait} that for types~$A$ and~$B$, the identity type of the function
 type~$A \to B$ can be described equivalently as the type of homotopies between functions from $A$ to $B$. As
we show next, it is possible to extend this equivalence to bipointed morphisms. In order to do so, the next definition  introduces the notion of a bipointed homotopy.

\medskip

Let us now fix two bipointed morphisms $f = (f, \bar{f}_0, \bar{f}_1)$ and $g = (g, \bar{g}_0, \bar{g}_1)$ from~$A$
to~$B$.

\begin{definition} \label{thm:biphomotopy} A \emph{bipointed homotopy} 
$(\alpha, \bar{\alpha}_0, \bar{\alpha}_1) \co f \to  g$
is a homotopy~$\alpha \co  \Hot(f, g)$ equipped with paths
$\bar{\alpha}_0 \co \Id(  \bar{f}_0 ,  \bar{g}_0 \ct \alpha_{a_0}  )$ and $\bar{\alpha}_1 \co \Id(
\bar{f}_1 , \bar{g}_1 \ct  \alpha_{a_1})$. 
\end{definition}

Diagrammatically, we represent the paths  involved in a bipointed homotopy as follows:
\[
\xymatrix@C=1.5cm{
f a_k  \ar[r]^{\alpha_{a_k}}  \ar@/_1pc/[dr]_{\bar{f}_k}  
\ar@{}[dr]|{\qquad \Rightarrow \; \bar{\alpha}_k}  & g a_k \ar[d]^{\bar{g}_k}  \\ 
 & \; b_k  \, ,}
  \] 
  for $k \in \{ 0, 1 \}$. The type of bipointed homotopies between $f$ and $g$ is then defined by letting
\[
 \BipHot  \big( (f,\bar{f}_0, \bar{f}_1), (g, \bar{g}_0, \bar{g}_1) \big)   \defeq   
 (\Sigma \alpha \co \Hot( f , g)) \big( 
  \Id\big( \bar{f}_0 ,   \bar{g}_0 \ct \alpha_{a_0}   \big) \times 
  \Id \big( \bar{f}_1,    \bar{g}_1 \ct \alpha_{a_1}  \big) \big) \, .
\]
Lemma~\ref{BoolHomSpace} essentially says that paths between bipointed morphisms
essentially the same thing as bipointed homotopies. This  is
the first instance of the suprising phenomenon, mentioned in the introduction,
that identity types capture higher-dimensional algebraic structures in an apparently
automatic way. It should also be pointed out that, as a consequence of the lemma, types of bipointed homotopies satisfy
analogues of the rules for identity types.

\begin{lemma} \label{BoolHomSpace} 
The canonical  function 
\[
\ext^{\Bip}_{f,g} \co \Id \big( (f, \bar{f}_0, \bar{f}_1), (g, \bar{g}_0, \bar{g}_1) \big) \to 
\BipHot\big( (f, \bar{f}_0, \bar{f}_1), (g, \bar{g}_0, \bar{g}_1) ) \big) \, .
\]
is an equivalence of types.
\end{lemma}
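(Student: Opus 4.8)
The plan is to realise $\ext^{\Bip}_{f,g}$ as a composite of equivalences already established in Section~\ref{sec:chait}, by unfolding the domain as the identity type of a $\Sigma$-type and matching it, component by component, with the $\Sigma$-type defining $\BipHot$. Abbreviate $\bar f \defeq (\bar f_0, \bar f_1)$ and $\bar g \defeq (\bar g_0, \bar g_1)$. Since $\BipHom(A,B) = (\Sigma f \co A \to B)\big(\Id(fa_0,b_0) \times \Id(fa_1,b_1)\big)$, the extension function $\ext^\Sigma$ for $\Sigma$-types, which is an equivalence already in $\mathcal{M}$, gives
\[
\Id\big((f,\bar f),(g,\bar g)\big) \simeq (\Sigma p \co \Id_{A \to B}(f,g))\, \Id\big(p_!\,\bar f,\, \bar g\big),
\]
the transport being taken in the family $h \mapsto \Id(ha_0,b_0) \times \Id(ha_1,b_1)$ over $h \co A \to B$. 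Applying the extension function $\ext^\times$ for products inside the fibre splits this as a product of the two conditions $\Id(p_!^k\, \bar f_k,\, \bar g_k)$ for $k \in \{0,1\}$, where $p_!^k$ denotes transport in the family $h \mapsto \Id(h a_k, b_k)$.

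The heart of the argument is the computation of these transports. For each $k$, the family $h \mapsto \Id(h a_k, b_k)$ is the composite of $\Id(-, b_k)$ with the evaluation map $\mathsf{ev}_{a_k} \co (A \to B) \to B$, so by functoriality of transport $p_!^k$ is transport in the identity family $y \mapsto \Id(y, b_k)$ along $\mathsf{ap}_{\mathsf{ev}_{a_k}}(p) \co \Id(fa_k, ga_k)$, which is exactly the component $\alpha_{a_k}$ of the homotopy $\alpha \defeq \ext^\to(p)$. A short path induction on $p$ then gives $p_!^k\, \bar f_k = \bar f_k \ct \alpha_{a_k}^{-1}$, so the fibre condition becomes $\Id(\bar f_k \ct \alpha_{a_k}^{-1},\, \bar g_k)$, and the groupoid laws supply an equivalence $\Id(\bar f_k \ct \alpha_{a_k}^{-1}, \bar g_k) \simeq \Id(\bar f_k,\, \bar g_k \ct \alpha_{a_k})$, transposing $\alpha_{a_k}$ across the equation. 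The fibre now depends on $p$ only through $\alpha = \ext^\to(p)$, so I may reindex the base along the equivalence $\ext^\to \co \Id_{A \to B}(f,g) \simeq \Hot(f,g)$ (this is the one step that uses function extensionality, hence $\Hint$), turning the $\Sigma$ into one over $\alpha \co \Hot(f,g)$ with fibre $\prod_{k}\Id(\bar f_k, \bar g_k \ct \alpha_{a_k})$, that is, exactly $\BipHot\big((f,\bar f),(g,\bar g)\big)$.

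Finally, I must check that the composite equivalence thus obtained is homotopic to the canonical map $\ext^{\Bip}_{f,g}$, so that it is this map which is shown to be an equivalence (being an equivalence is invariant under homotopy). Like the other extension functions, $\ext^{\Bip}_{f,g}$ is defined by $\Id$-elimination, sending $\refl_{(f,\bar f)}$ to the identity bipointed homotopy; so by path induction it suffices to see that the composite agrees with it on $\refl$, and there every transport and whiskering above reduces to the identity while each constituent $\ext$ returns reflexivity data, so both maps yield the identity bipointed homotopy. I expect the main obstacle to be the transport computation of the second paragraph: matching $\mathsf{ap}_{\mathsf{ev}_{a_k}}(p)$ with $\alpha_{a_k}$ and keeping the directions of composition and inversion consistent with the convention $q \ct p$ (in which $p$ is applied first) is where the bookkeeping is delicate, whereas the remainder is a routine assembly of the characterizations recalled in Section~\ref{sec:chait}.
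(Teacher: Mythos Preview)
Your proof is correct and follows essentially the same route as the paper: unfold the identity type of the $\Sigma$-type $\BipHom(A,B)$, compute the transport in the fibre $h \mapsto \Id(ha_k,b_k)$ in terms of the homotopy associated to $p$, and then reindex the outer $\Sigma$ along the function-extensionality equivalence $\Id(f,g)\simeq\Hot(f,g)$. The only cosmetic difference is that the paper uses the backward transport $p^*$ applied to $\bar g_k$ (yielding directly $\Id(\bar f_k,\,\bar g_k\ct(\ext\,p)_{a_k})$ without a separate transposition step), whereas you use the forward transport $p_!$ on $\bar f_k$ and then invoke the groupoid laws; and you are more explicit than the paper in checking that the composite equivalence is homotopic to the canonical $\ext^{\Bip}_{f,g}$.
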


\begin{proof} Recall that, for a path $p \co \Id( f, g)$, we write $\ext \, p \co \Hot(f, g)$
for the corresponding homotopy. We then have
\begin{align*}
 \Id \big( (f,\bar{f}_0,\bar{f}_1) , (g,\bar{g}_0,\bar{g}_1)  \big)
  &  \simeq (\Sigma p \co \Id( f, g))  \, 
  \Id \big(  (\bar{f}_0, p^*(\bar{g}_0) \big) \times \Id \big( \bar{f}_1,   p^{\ast} (\bar{g}_1) \big) \\
 & \simeq (\Sigma p  \co \Id(f,g))  \, \Id(\bar{f}_0,  \bar{g}_0 \ct (\ext \, p)_{a_0} ) \times \Id( \bar{f}_1, 
 \bar{g}_1\ct (\ext \, p)_{a_1} ) \\
& \simeq (\Sigma \alpha \co \Hot(f, g)) \,  \Id(\bar{f}_0,  \bar{g}_0 \ct \alpha_{a_0} ) \times \Id(\bar{f}_1, \bar{g}_1
\ct \alpha_{a_1} ) \\
& = \BipHot \big( (f,\bar{f}_0,\bar{f}_1) \; (g,\bar{g}_0,\bar{g}_1) \big) \, , 
\end{align*} 
as required.
\end{proof}

\subsection*{Fibered bipointed types and bipointed sections} Recall that for a dependent type
\[
 x \co A \vdash E(x) \co \type
\]
we referred to an element $f \co (\Pi x \co A) E(x)$ as a \emph{section} of the dependent type.
It will be convenient to extend this notion  to bipointed types  by introducing the following definition. 

\medskip

Let us fix a bipointed type $A = (A, a_0, a_1)$. 

\begin{definition} \label{def:fibbipointed}
A \emph{fibered bipointed type} $(E, e_0, e_1)$ over $A$  is a dependent type~$x\co A \vdash E(x) \co \type$ 
equipped  with elements $e_0 \co E(a_0)$ and $e_1 \co E(a_1)$.
\end{definition}

The type of  small fibered bipointed types over a bipointed type $A$ is then defined by letting
\[
\FibBip(A) \defeq (\Sigma E \co A \to \U) \big( E(a_0) \times E(a_1) \big) \, .
 \]
 
 \medskip

Let us now fix a fibered bipointed type $E = (E, e_0, e_1)$  over $A$.
 
 \medskip
 
The type
$E'  \defeq (\Sigma x \co A) E(x)$ can be equipped with the structure of
a a bipointed type by considering $e'_k \defeq \pair(a_k, e_k)$ 
(for $k \in \{ 0, 1 \}$) as distinguished elements of $E'$. In this way, the first projection $\pi_1 \co E' \to A$ becomes a bipointed morphism:
\[
\xymatrix@C=2cm{
1  \ar[r]^-{e'_0} \ar@{=}[d] \ar@{}[dr]|{\overline{(\pi_1)}_0} & E' \ar[d]^{\pi_1} & 1 \ar[l]_-{e'_1} \ar@{=}[d]
\ar@{}[dl]|{\overline{(\pi_1)}_1}   \\ 
 1 \ar[r]_{a_0} & A  & \; 1 \, . \ar[l]^{a_1} }
 \]

\begin{definition} \label{def:fibsection} A \emph{bipointed section} $(f, \bar{f}_0, \bar{f}_1)$ of $E$
is a section $f \co (\Pi x \co A) E(x)$ equipped with paths $\bar{f}_0 \co \Id_{E(a_0)}(f a_0, e_0)$ 
and $\bar{f}_1 \co  \Id_{E(a_1)}( f a_1 , e_1)$. 
\end{definition}

The type of bipointed sections of $E$ is then defined by letting
\[
\BipSec(A,E) \defeq (\Sigma f \co (\Pi x \co A)E(x) ) \; \big(
  \Id_{E(a_0)}(f a_0,  e_0)  \times \Id_{E(a_1)}( fa_1, e_1)  \big) \, .
\]
Given a bipointed section $f = (f, \bar{f}_0, \bar{f}_1)$ of $E$, we can define a bipointed morphism~$f'  \co A \to E'$, where $E' =
(E', e'_0, e'_1)$ is  the bipointed type associated to $E$. Its underlying function is defined
by $f' \defeq (\lambda x \co A) \pair(x, fx)$. With this definition, it is 
immediate to get the required paths~$\bar{f'}_k \co \Id( f' a_k ,  e'_k)$, for $k \in \{ 0, 1\}$. Note that
the morphism $f' \co A \to E'$ provides a  right inverse for~$\pi_1 \co E' \to A$,
 since for every $x \co A$ we have the judgemental equalities $\pi_1 (f' x) \deq \pi_1\, \pair(x, f x) \deq x$.
 We represent this situation with the diagram
\[
\xymatrix{
E' \ar[d]_{\pi_1} \\
A . \ar@/_1.2pc/[u]_{f'} }
\]
We characterize the identity type between two bipointed sections, using
the notion of a bipointed homotopy. This is in complete analogy with what was done
for bipointed morphisms  in Lemma~\ref{BoolHomSpace}. 

\medskip

Let us now fix two bipointed sections $f = (f, \bar{f}_0,\bar{f}_1)$ and $g = (g, \bar{g}_0, \bar{g}_1)$  of $E$.

\begin{definition} \label{def:2cellsection} A \emph{bipointed homotopy}  $(\alpha, \bar{\alpha_0},
\bar{\alpha}_1) \co f \to g$ is  a homotopy~$\alpha \co \Hot(f, g)$ equipped with paths $\bar{\alpha}_0 \co \Id(  \bar{f}_0 ,  \bar{g}_0 \ct \alpha_{a_0}  )$ and $\bar{\alpha}_1 \co \Id(
\bar{f}_1 , \bar{g}_1 \ct  \alpha_{a_1})$.
\end{definition}

The type of bipointed homotopies between $f$ and $g$ as above is then defined by letting:
\[
\BipHot \big( (f, \bar{f}_0, \bar{f}_1), (g, \bar{g}_0, \bar{g}_1) \big) \defeq
(\Sigma \alpha \co \Hot( f, g)) \, \big( 
\Id\big( \bar{f}_0 ,   \bar{g}_0 \ct \alpha_{a_0}  \big) \times 
  \Id \big( \bar{f}_1,    \bar{g}_1 \ct \alpha_{a_1}  \big) \big)   \, .
\]

\begin{lemma} \label{thm:biphot}
The canonical function
\[
\ext^{\BipHot}_{f,g} \co \Id\big( (f, \bar{f}_0, \bar{f}_1), (g, \bar{g}_0, \bar{g}_1) \big) \rightarrow
\BipHot \big( (f, \bar{f}_0, \bar{f}_1), (g, \bar{g}_0, \bar{g}_1) \big) 
\]
is an equivalence of types. 
\end{lemma}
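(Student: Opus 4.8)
The plan is to reproduce, step for step, the argument used for Lemma~\ref{BoolHomSpace}, exhibiting $\ext^{\BipHot}_{f,g}$ as a composite of equivalences. Write $F \defeq (\Pi x \co A)E(x)$ for the underlying type of sections, so that $\BipSec(A,E)$ is a $\Sigma$-type with base $F$ and with fiber $h \mapsto \Id_{E(a_0)}(ha_0, e_0) \times \Id_{E(a_1)}(ha_1, e_1)$ over $h \co F$. First I would apply the extension function $\ext^{\Sigma}$ for $\Sigma$-types, which is already an equivalence in $\mathcal{M}$, together with the fact that transport into a product fiber is computed componentwise, to obtain
\[
\Id\big((f,\bar{f}_0,\bar{f}_1),(g,\bar{g}_0,\bar{g}_1)\big) \simeq (\Sigma p \co \Id_F(f,g))\; \Id(\bar{f}_0, p^*\bar{g}_0) \times \Id(\bar{f}_1, p^*\bar{g}_1),
\]
using the $p^*$-variant of $\ext^{\Sigma}$ exactly as in Lemma~\ref{BoolHomSpace}.

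The middle step is a computation identifying the contravariant transport $p^*\bar{g}_k \co \Id_{E(a_k)}(fa_k, e_k)$ with the composite $\bar{g}_k \ct (\ext\, p)_{a_k}$, where $\ext\, p \co \Hot(f,g)$ is the homotopy associated to $p$ by the extension function for $\Pi$-types. I would prove this by path induction on $p$, generalizing $g$ and $\bar{g}_k$ into the motive: it then suffices to treat $p = \refl(f)$, in which case $p^*$ is the identity, $\ext\, \refl(f)$ is the constant homotopy with components $\refl(fa_k)$, and the groupoid unit law gives $\bar{g}_k \ct \refl(fa_k) = \bar{g}_k$. This propositional equality of endpoints induces, fiberwise, an equivalence $\Id(\bar{f}_k, p^*\bar{g}_k) \simeq \Id(\bar{f}_k, \bar{g}_k \ct (\ext\, p)_{a_k})$, and hence an equivalence of the corresponding total spaces.

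Finally I would replace the base of the $\Sigma$-type. Since the $\Pi$-extension function $\ext^{\Pi}_{f,g} \co \Id_F(f,g) \to \Hot(f,g)$ is an equivalence in $\Hint$ by function extensionality, reindexing the fibers along it and writing $\alpha \defeq \ext\, p$ yields
\[
(\Sigma p \co \Id_F(f,g))\; \Id(\bar{f}_0, \bar{g}_0 \ct (\ext\, p)_{a_0}) \times \Id(\bar{f}_1, \bar{g}_1 \ct (\ext\, p)_{a_1}) \simeq \BipHot\big((f,\bar{f}_0,\bar{f}_1),(g,\bar{g}_0,\bar{g}_1)\big).
\]
Composing the three equivalences proves the statement, and, just as in Lemma~\ref{BoolHomSpace}, one checks that the composite coincides with the canonical function $\ext^{\BipHot}_{f,g}$. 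The construction is formally parallel to the bipointed-morphism case; the only differences are that $f,g$ now live in the $\Pi$-type $(\Pi x \co A)E(x)$ rather than in a function type $A \to B$, and that the structure paths $\bar{f}_k$, $\bar{g}_k$ are typed in the fibers $E(a_k)$. Neither difference affects the argument, because $\ext^{\Pi}$ is an equivalence for $\Pi$-types in exactly the way $\ext^{\to}$ is for function types. The only place requiring genuine care is the transport identity of the middle step, and even that reduces by $\Id$-induction to the reflexivity case, so I expect no real obstacle.
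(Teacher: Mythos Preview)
Your proposal is correct and follows exactly the approach the paper intends: the paper's own proof reads, in full, ``The claim follows by an argument analogous to that of Lemma~\ref{BoolHomSpace},'' and you have faithfully unfolded that analogous argument step by step. The three-step chain of equivalences you describe (characterization of paths in $\Sigma$-types, identification of $p^*\bar{g}_k$ with $\bar{g}_k \ct (\ext\, p)_{a_k}$ by $\Id$-elimination, and reindexing along $\ext^{\Pi}$) is precisely the content of the proof of Lemma~\ref{BoolHomSpace}, with function types replaced by $\Pi$-types.
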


\begin{proof} The claim follows by an argument analogous to that of Lemma~\ref{BoolHomSpace}.
\end{proof}

\subsection*{Bipointed equivalences} We introduce the notion of equivalence between bipointed types and show
in Proposition~\ref{thm:usemere}  that a bipointed morphism is an equivalence of bipointed types if and only its underlying function is an equivalence of types. For this, we will use the characterization of equivalence of types
as functions with a left and right inverse, which we recalled in Section~\ref{sec:bac}. The characterization of
bipointed equivalences given below will be used in Section~\ref{sec:unibip} where we consider the counterpart of the univalence axiom for bipointed types.

\begin{definition} We say that a bipointed morphism $f \co A \to B$ is a \myemph{bipointed equivalence}
if there exist bipointed morphisms $g \co B \to A$ and $h \co B \to A$ which provide a left and a right bipointed inverse for $f$, \ie such that there exist paths $p \co \Id_{\Bip(A,A)}(g  f, 1_A)$ 
and $q \co \Id_{\Bip(B,B)} ( f  h, 1_B)$.
\end{definition}

For a bipointed morphism $f \co A \to B$, the type of proofs that $f$ is a bipointed equivalence is
then defined by letting
\[
\isbipequiv(f) \defeq   (\Sigma g \co \BipHom(B,A)) \,  \Id_{\Bip(A,A)}( g  f, 1_A ) \times 
    (\Sigma h \co \BipHom(A, B)) \, \Id_{\Bip(B,B)} (f  h , 1_B ) \, ,
\]
and type of bipointed equivalences between $A$ and $B$ is defined by letting
\[
\BipEquiv(A, B)
\defeq    
(\Sigma f \co \BipHom(A,B)) \, \isbipequiv(f)  \, . 
\]

\begin{lemma} The underlying function of a bipointed equivalence is an equivalence of types. In particular,
for every bipointed morphism $f \co A \to B$  there is a  function 
\[
\pi_f \co \isbipequiv(f) \to \isequiv(f) \, .
\]
\end{lemma}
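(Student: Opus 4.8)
The plan is to project the bipointed left and right inverses down to ordinary function-level left and right inverses for the underlying function of $f$, and then to invoke the characterization of equivalences as functions possessing a left and a right inverse from~\eqref{equ:equivalternative}.

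First I would unpack an element of $\isbipequiv(f)$ as consisting of a bipointed morphism $g \co B \to A$ together with a path $p \co \Id_{\Bip(A,A)}(g \circ f, 1_A)$, and a bipointed morphism $h \co B \to A$ together with a path $q \co \Id_{\Bip(B,B)}(f \circ h, 1_B)$. Since each type of bipointed morphisms is a $\Sigma$-type whose first component is the underlying function, the first projection $\pi_1$ sends a bipointed morphism to its underlying function. By the definitions of bipointed composition and bipointed identity, the underlying function of $g \circ f$ is the ordinary composite $\pi_1 g \circ \pi_1 f$ and the underlying function of $1_A$ is the identity function $1_A$, these equalities holding judgementally; similarly for $f \circ h$ and $1_B$.

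Next I would push the paths $p$ and $q$ down to the level of underlying functions. Applying the first projection $\pi_1 \co \Bip(A,A) \to (A \to A)$ to the path $p$---that is, taking its image under the action of $\pi_1$ on paths, which is definable by $\Id$-elimination---yields a path in $\Id_{A \to A}(\pi_1 g \circ \pi_1 f, 1_A)$, exhibiting $\pi_1 g$ as a function-level left inverse of $\pi_1 f$. Likewise, projecting $q$ yields a path in $\Id_{B \to B}(\pi_1 f \circ \pi_1 h, 1_B)$, exhibiting $\pi_1 h$ as a function-level right inverse. Pairing these gives an element of
\[
(\Sigma g' \co B \to A)\,\Id(g' \circ \pi_1 f, 1_A) \times (\Sigma h' \co B \to A)\,\Id(\pi_1 f \circ h', 1_B),
\]
which is exactly the right-hand side of~\eqref{equ:equivalternative} for the function $\pi_1 f$. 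Composing with the inverse of the equivalence~\eqref{equ:equivalternative} then produces the desired element of $\isequiv(\pi_1 f)$; since $\pi_1 f$ is the underlying function of the bipointed morphism $f$, this is precisely $\isequiv(f)$, and the whole construction defines the required function $\pi_f$.

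The construction is routine, and the only point demanding care is the compatibility of $\pi_1$ with composition and identities, which guarantees that the projected paths inhabit the identity types between the correct underlying functions. This compatibility is immediate from the definitions of the bipointed composite and the bipointed identity morphism, whose underlying functions are the ordinary composite and the ordinary identity, respectively.
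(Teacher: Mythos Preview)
Your argument is correct: projecting the bipointed left and right inverses to their underlying functions, applying the action of $\pi_1$ on paths to $p$ and $q$, and then invoking~\eqref{equ:equivalternative} does define the required map $\pi_f$. The judgemental identifications you rely on (that $\pi_1$ of a bipointed composite is the ordinary composite, and $\pi_1$ of the bipointed identity is the identity function) hold because both are canonical pairs and the projection computes on canonical pairs.

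The paper, however, takes a more elaborate route. Rather than projecting directly, it first establishes an equivalence
\[
\isbipequiv(f) \;\simeq\; (\Sigma g)(\Sigma p \co \Id(gf,1_A))\,G'(g,p) \times (\Sigma h)(\Sigma q \co \Id(fh,1_B))\,H'(h,q),
\]
in which the ``extra'' bipointed data over each pair $(g,p)$ and $(h,q)$ is isolated as a separate factor $G'(g,p)$, $H'(h,q)$. The map $\pi_f$ is then obtained by composing this equivalence with the projection that forgets $G'$ and $H'$, followed by~\eqref{equ:equivalternative}. Your direct projection and the paper's composite are the same function up to homotopy, so nothing is lost for the lemma itself. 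What the paper's decomposition buys is the subsequent Proposition~\ref{thm:usemere}: once $\pi_f$ is presented in this factored form, its homotopy fibre over a given $(g,h,p,q)$ is immediately identified with $G'(g,p) \times H'(h,q)$, and the proof that $\pi_f$ is an equivalence reduces to showing these factors are contractible. Your approach proves the lemma more quickly, but you would need to redo this structural analysis when you reach the proposition.
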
 

\begin{proof} Let $f = (f, \bar{f}_0, \bar{f}_1)$ be a bipointed morphism from $A$ to $B$. Unfolding  the definition
of  $\isbipequiv(f)$ yields the type
\begin{multline}
\label{equ:unfoldisbipequiv} 
(\Sigma g \co B \to  A)
(\Sigma \bar{g}_0 \co \Id( g b_0, a_0)) 
(\Sigma  \bar{g}_1 \co \Id( g b_1, a_1)) \, 
 G(g,\bar{g}_0,\bar{g}_1) \, \times \\
 (\Sigma h \co B \to A)
 (\Sigma \bar{h}_0 \co \Id ( h b_0,  a_0))
 (\Sigma \bar{h}_1 \co  \Id ( h b_1,  a_1)) \, 
 H(h,\bar{h}_0,\bar{h}_1)   \, ,
\end{multline}
where 
\begin{align*}
G(g,\bar{g}_0,\bar{g}_1) & \defeq 
\Id 
\big( \, 
( 
g f \, ,  \overline{gf}_0  \, ,    \overline{gf}_1 
) \, , \;
( 
1_A \, ,  1_{a_0} \, , 1_{a_1} 
) \, 
\big)  \, , \\
H(h,\bar{h}_0,\bar{h}_1) & \defeq 
\Id 
\big( 
(
 f  h \, ,   \overline{fh}_0 \, , \overline{fh}_1   
 ) \, , \; 
 ( 
 1_B \, ,  1_{b_0} \, , 1_{b_1} 
 ) 
 \big) \, .
\end{align*}
The type $G(g, \bar{g}_0, \bar{g}_1)$ can be thought of as the type of proofs that the
bipointed morphism  $g  f \co A \to A$ is propositionally equal to the identity bipointed morphism
$1_A \co A \to A$, while  $H(h, \bar{h}_0, \bar{h}_1)$ can be thought of as the type of proofs that the
bipointed morphism  $f  h \co B \to B$ is propositionally equal to the identity bipointed morphism
$1_B \co B \to B$. In particular, the elements of $G(g, \bar{g}_0, \bar{g}_1)$ can be thought of as
proofs that the pasting diagram 
\[
\xymatrix@C=2cm@R=1.2cm{
1    \ar[r]^{a_0} \ar@{=}[d]  \ar@{}[dr]|{\Downarrow \, \bar{f}_0} & A  \ar[d]^{f} & 1  \ar[l]_{a_1} \ar@{=}[d] \ar@{}[dl]|{\Downarrow \,  \bar{f}_1} \\
1 \ar[r]_{b_0}   \ar@{}[dr]|{\Downarrow \, \bar{g}_0}  \ar@{=}[d] & B \ar[d]^g   & 1 \ar[l]^{b_1} \ar@{=}[d] \ar@{}[dl]|{\Downarrow \,  \bar{g}_1} \\
1 \ar[r]_{a_0}  & A   & 1 \ar[l]^{a_1}}
\]
is propositionally equal to the diagram in~\eqref{equ:bipidA} representing the identity bipointed morphism. 

Using the characterization of identity types of $\Sigma$-types in Section~\ref{sec:chait}, the type $G(g,\bar{g}_0,\bar{g}_1)$ can be  expressed equivalently as
\[
(\Sigma p \co  \Id( g f, 1_A)) \, 
\Id 
\big(  
( 
\overline{gf}_0  \, ,    \overline{gf}_1 
) \, ,  \; 
 p^*
 ( 
 1_{a_0} \, , 1_{a_1}  
 )  
 \big) \, ,
\]
where, for $p \co \Id(gf, 1_A)$, 
\[
p^* \co \big(  \Id( 1_A(a_0), a_0) \times \Id(1_A(a_1), a_1) \big) \to  \big( \Id( gf(a_0), a_0) \times \Id( gf(a_1), a_1)  \big)
\]
is a transport function associated to $p$. Similarly, the type~$H(h, \bar{h}_0, \bar{h}_1)$ is equivalent to 
\[
(\Sigma q \co  \Id( f h, 1_B)) \, 
\Id 
\big(  
( 
\overline{fh}_0  \, ,    \overline{fh}_1 
) \, ,  \; 
 q^*
 ( 
 1_{b_0} \, , 1_{b_1}  
 )  
 \big) \, .
\]
Thus, rearranging the order of the $\Sigma$-types in~\eqref{equ:unfoldisbipequiv} and using the
characterization of identity types in product types, we get that 
\begin{multline}
\label{equ:equivdescisbipequiv}
\isbipequiv(f) \simeq  \\ (\Sigma g \co B \to A)(\Sigma p \co \Id(gf, 1_A))\,  G'(g,p) \times
(\Sigma h \co B \to A)(\Sigma q \co \Id(fh, 1_B))\, H'(h,q) \, , 
\end{multline}
where 
\begin{align} 
G'(g,p) & \defeq 
(\Sigma \bar{g}_0 \co \Id( g b_0, a_0)) \, \Id( \overline{gf}_0, p^*(1_{a_0}) ) \times 
(\Sigma  \bar{g}_1 \co \Id( g b_1, a_1)) \, \Id( \overline{gf}_1, p^*(1_{a_1}) ) \, ,
 \label{equ:bipgp} \\
 H'(h,q) & \defeq 
 (\Sigma \bar{h}_0 \co \Id ( h b_0,  a_0)) \, \Id ( \overline{fh}_0 , q^*(1_{b_0})) \times 
 (\Sigma \bar{h}_1 \co  \Id ( h b_1,  a_1)) \, \Id (   \overline{fh}_1 \, ,  q^*(1_{b_1})) \, .  \label{equ:biphq}
 \end{align} 
 Note that the elements of~$G'(g,p)$ are 4-tuples consisting of paths $\bar{g}_0$, $\bar{g_1}$ making the function~$g$ into a bipointed morphism and of paths $\bar{p}_0$, $\bar{p}_1$ making the path $p$ into a path between
bipointed morphisms. Of course, the elements $H'(h,q)$ admits a similar description. 
The required  function  is then obtained 
by composing the equivalence in~\eqref{equ:equivdescisbipequiv}, the projection 
forgetting the components from $G'(g,p)$ and $H'(h,q)$, and the equivalence in~\eqref{equ:equivalternative}.
\end{proof}

In Proposition~\ref{thm:usemere} we will give an alternative characterisation of bipointed 
equivalences, which will be used in the proof of Theorem~\ref{thm:bipunivalence} and Corollary~\ref{BoolHInitIso}.
Intuitively, it asserts that for every bipointed morphism $(f, \bar{f}_0, \bar{f}_1)$, 
 if the underlying function $f$  is an equivalence of types, there is an 
essentially unique way of making $(f, \bar{f}_0, \bar{f}_1)$ into a bipointed equivalence, \ie of equipping the left and right inverses of $f$ with the structure of bipointed morphisms so as to obtain bipointed inverses\footnote{This has several analogues in category theory. For example, consider monoidal categories $\mathbb{C}$ and $\mathbb{D}$
and a strong monoidal functor $F \co \mathbb{C} \to \mathbb{D}$ which is an equivalence of categories. There is then an
essentially unique way of making a quasi-inverse of $F$ into a strong monoidal functor so as to obtain a
monoidal equivalence.}  In order to prove this result,
we need the following straightforward lemma.

\begin{lemma} \hfill \label{thm:useful}
\begin{enumerate}[(i)]
\item Let $A$ be a type and $a, a_1, a_2 \co A$. For paths $p_1 \co \Id(a,a_1)$, $p_2 \co \Id(a,a_2)$, the type 
\[
(\Sigma q \co \Id_A(a_1,a_2)) \, \Id( q \ct p_1 \, , p_2)
\] 
is contractible. 
\item Let $f \co A \to B$ be an equivalence, $a_1, a_2 \co A$ and $b \co B$. For paths $p_1 \co \Id(b, f a_1)$, 
$p_2 \co \Id(b, fa_2)$, the type 
\[
(\Sigma q \co \Id_A(a_1,a_2)) \, \Id( (f \circ q) \ct p_1 , p_2)
\] 
is contractible.
\end{enumerate}
\end{lemma}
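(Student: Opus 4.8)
The plan is to prove both parts by reducing to a contractibility statement that follows from the standard fact that the homotopy fibers of an equivalence are contractible, together with the characterization of identity types and transport recalled in Section~\ref{sec:chait}.

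For part (i), the key observation is that the displayed $\Sigma$-type is precisely a homotopy fiber, up to the equivalences we have available. First I would note that path composition with a fixed path is an equivalence: for fixed $p_1 \co \Id(a,a_1)$, the function $(\text{-}) \ct p_1 \co \Id_A(a_1, a_2) \to \Id_A(a, a_2)$ sending $q$ to $q \ct p_1$ has a two-sided inverse given by composition with $p_1^{-1}$, using the groupoid laws. Writing $c_{p_1}$ for this equivalence, the type in question is $(\Sigma q \co \Id_A(a_1,a_2))\, \Id(c_{p_1}(q), p_2)$, which is exactly $\hfiber(c_{p_1}, p_2)$. Since $c_{p_1}$ is an equivalence, all its homotopy fibers are contractible by definition of $\isequiv$, so this type is contractible, as required.

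For part (ii), I would follow the same strategy but incorporate the hypothesis that $f$ is an equivalence. The function sending $q \co \Id_A(a_1,a_2)$ to $(f \circ q) \ct p_1 \co \Id(b, fa_2)$ factors as the composite of $f$ acting on paths, namely $\mathsf{ap}_f \co \Id_A(a_1,a_2) \to \Id_B(fa_1, fa_2)$ sending $q$ to $f \circ q$, followed by the post-composition-with-$p_1$ equivalence used in part (i). The second factor is an equivalence exactly as before. For the first factor, the action of $f$ on paths is an equivalence precisely because $f$ is an equivalence of types; this can be obtained from the contractibility of the homotopy fibers of $f$, or cited as a standard consequence recalled in Section~\ref{sec:bac}. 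A composite of two equivalences is an equivalence, so the whole function is an equivalence, and its homotopy fiber over $p_2$, which is the type in the statement, is therefore contractible.

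The main obstacle I anticipate is the bookkeeping needed to identify the displayed $\Sigma$-types literally as homotopy fibers and to verify that the relevant maps are equivalences using only the tools available in $\Hint$; in particular, establishing that $\mathsf{ap}_f$ is an equivalence when $f$ is, without appealing to machinery beyond what has been introduced, requires a small argument via the characterization of paths and the contractibility of $f$'s fibers. Everything else reduces to the groupoid laws and the definition of $\isequiv$, so once the identification with homotopy fibers is made precise the contractibility is immediate.
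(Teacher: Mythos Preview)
Your argument is correct. The paper does not actually give a proof of this lemma: it introduces it as ``the following straightforward lemma'' and immediately moves on to Proposition~\ref{thm:usemere}, so there is no proof in the paper to compare against. Your approach---recognising each displayed $\Sigma$-type as the homotopy fiber of an equivalence (post-composition by $p_1$ in part~(i), and $\mathsf{ap}_f$ followed by post-composition by $p_1$ in part~(ii))---is a perfectly good way to supply the omitted details, and all the ingredients you use (groupoid laws, closure of equivalences under composition, and the fact that $\mathsf{ap}_f$ is an equivalence when $f$ is) are standard in $\Hint$. An equally short alternative, perhaps closer in spirit to what the authors had in mind by ``straightforward,'' is to apply $\Id$-elimination on $p_1$ to reduce part~(i) to the contractibility of the based path space $(\Sigma q \co \Id(a,a_2))\,\Id(q,p_2)$, and similarly to reduce part~(ii) directly to the contractibility of a homotopy fiber of $\mathsf{ap}_f$; but your route is just as valid.
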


\begin{proposition}  \label{thm:usemere}  A bipointed morphism $(f, \bar{f}_0, \bar{f}_1) \co A \to B$ is a bipointed equivalence if and only
if its underlying function $f \co A \to B$ is an equivalence. In fact,  the  function
\[
\pi_f \co \isbipequiv(f, \bar{f}_0, \bar{f}_1)  \to \isequiv(f) \, .
\]
is an equivalence of types. 
\end{proposition}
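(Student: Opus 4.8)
The plan is to prove that $\pi_f$ is an equivalence by showing that all of its homotopy fibers are contractible, reusing the analysis already carried out in the proof of the preceding lemma. Recall from there the equivalence~\eqref{equ:equivdescisbipequiv}, which exhibits $\isbipequiv(f)$ as $(\Sigma g \co B \to A)(\Sigma p \co \Id(gf, 1_A))\, G'(g,p) \times (\Sigma h \co B \to A)(\Sigma q \co \Id(fh, 1_B))\, H'(h,q)$ with $G'(g,p)$ and $H'(h,q)$ as in~\eqref{equ:bipgp} and~\eqref{equ:biphq}, and recall that under this equivalence together with the equivalence~\eqref{equ:equivalternative} for $\isequiv(f)$ the map $\pi_f$ becomes the evident projection forgetting the $G'$ and $H'$ components. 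Since the two flanking maps are equivalences, the homotopy fibers of $\pi_f$ agree up to equivalence with those of this projection; concretely, the fiber over a point of $\isequiv(f)$ corresponding to a pair $((g,p),(h,q))$ is equivalent to $G'(g,p) \times H'(h,q)$. It therefore suffices to prove that each such product is contractible, and since a product of contractible types is contractible and each of $G'$ and $H'$ is itself a product of two factors indexed by $k \in \{ 0,1 \}$, it is enough to treat each factor separately.

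For $G'(g,p)$ the $k$-th factor is $(\Sigma \bar{g}_k \co \Id(g b_k, a_k))\, \Id(\overline{gf}_k, p^*(1_{a_k}))$. Unfolding the composite as $\overline{gf}_k = \bar{g}_k \ct (g \circ \bar{f}_k)$, the unknown path $\bar{g}_k$ appears as the outermost factor of a path composite, so this is exactly a type of the form treated in Lemma~\ref{thm:useful}(i), with $\bar{g}_k$ playing the role of the free path $q$, the fixed path $g \circ \bar{f}_k$ the role of $p_1$, and $p^*(1_{a_k})$ the role of $p_2$; hence it is contractible. I note that this argument uses nothing about $f$, reflecting the fact that a left inverse imposes no obstruction.

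For $H'(h,q)$ the $k$-th factor is $(\Sigma \bar{h}_k \co \Id(h b_k, a_k))\, \Id(\overline{fh}_k, q^*(1_{b_k}))$, where now $\overline{fh}_k = \bar{f}_k \ct (f \circ \bar{h}_k)$, so the unknown path $\bar{h}_k$ enters only through its image $f \circ \bar{h}_k$ under $f$. Here I would first use the groupoid laws to whisker away the fixed prefix $\bar{f}_k$ and normalise units, bringing the factor into the shape treated in Lemma~\ref{thm:useful}(ii), whose contractibility relies precisely on $f$ being an equivalence, so that $\bar{h}_k \mapsto f \circ \bar{h}_k$ is an equivalence on the relevant path types. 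The equivalence hypothesis on $f$ is legitimately available because contractibility of the fibers of $\pi_f$ is tested at points of $\isequiv(f)$: any such point is, by definition, a proof that $f$ is an equivalence. Combining the two cases, every fiber $G'(g,p) \times H'(h,q)$ is contractible, so $\pi_f$ is an equivalence; in particular $\isbipequiv(f)$ is inhabited if and only if $\isequiv(f)$ is, which yields the stated logical equivalence.

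The step I expect to be the main obstacle is the path-algebraic bookkeeping needed to bring the factors of $G'(g,p)$ and $H'(h,q)$ into the exact syntactic form required by Lemma~\ref{thm:useful}: computing the transports $p^*(1_{a_k})$ and $q^*(1_{b_k})$ explicitly, and using the groupoid laws to reconcile the composition orders in $\overline{gf}_k$ and $\overline{fh}_k$ with the forms $q \ct p_1$ and $(f \circ q) \ct p_1$ appearing there. The one genuinely conceptual point, as opposed to routine manipulation, is recognising that the $H'$ factors become contractible only once $f$ is known to be an equivalence, and that this is exactly what the base $\isequiv(f)$ supplies when we reduce the claim to fiberwise contractibility.
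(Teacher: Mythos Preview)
Your proposal is correct and follows essentially the same approach as the paper: both show $\pi_f$ is an equivalence by proving its homotopy fibers are contractible via the decomposition $\hfiber(\pi_f,(g,h,p,q)) \simeq G'(g,p) \times H'(h,q)$ from~\eqref{equ:equivdescisbipequiv} and then invoking Lemma~\ref{thm:useful}. You are in fact more careful than the paper about the asymmetry between the two factors: the paper treats only $G'(g,p)$ via part~(i) of Lemma~\ref{thm:useful} and declares the $H'$ case ``essentially the same,'' whereas you correctly observe that $H'(h,q)$ requires part~(ii) and uses that $f$ is an equivalence (which is available since the fiber is taken over a point of $\isequiv(f)$).
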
  

\begin{proof}
Let  $(f, \bar{f}_0, \bar{f}_1) \co A \to B$ be a bipointed morphism. We wish to show that the homotopy fibers
of the function $\pi_f$ are contractible. So, let us fix a canonical element of $\isequiv(f)$, given by functions $g \co B
\to A$, $h \co B \to A$ and paths $p \co \Id(gf, 1_A)$ and $q \co \Id(fh, 1_B)$. By the definition of~$\pi_f$ and standard
facts about the homotopy fibers, we have an equivalence 
\[
\hfiber(\pi_f, (g,h, p, q)) \simeq G'(g,p) \times H'(h,q) \, ,
\]
 where $G'(g,p)$ and $H'(h,q)$ are defined
in~\eqref{equ:bipgp} and~\eqref{equ:biphq}, respectively. 
We claim that $G'(g,p)$ and $H'(h,q)$ are contractible. Since
the proofs are essentially the same, we consider only $G'(g,p)$.  

Let $k \in \{0, 1 \}$. For a path $p \co \Id(gf, 1_A)$, the path $p^*(1_{a_k}) \co \Id( gf a_k, b_k)$ can be proved by $\Id$-elimination to be propositionally equal to $(\ext \, p)_{a_k}\co \Id(gfa_k, b_k)$, where $\ext \, p \co \Hot(gf, 1_A)$. Combining this fact with the definition of composition of bipointed morphisms, we obtain that
$G'(g,p)$ is equivalent to the product of the types
\[
(\Sigma \bar{g}_k \co \Id( gb_k, a_k)) \, \Id ( \bar{g}_k \ct (g \circ \bar{f}_k), (\ext \, p)_{a_k}) \, ,
\]
for $k \in \{0, 1 \}$, which are contractible by part~(i) of Lemma~\ref{thm:useful}. Hence $G'(h,p)$ is contractible, as required.
\end{proof}

\begin{corollary} For any bipointed morphism $(f, \bar{f}_0, \bar{f}_1)$, the type $\isbipequiv(f, \bar{f}_0, \bar{f}_1)$ is a mere proposition. \qed
\end{corollary}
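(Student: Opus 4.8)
The plan is to deduce the result immediately from Proposition~\ref{thm:usemere}, using the fact that the property of being a mere proposition is invariant under equivalence of types. By that proposition, the canonical function $\pi_f \co \isbipequiv(f, \bar{f}_0, \bar{f}_1) \to \isequiv(f)$ is an equivalence of types; hence it suffices to prove that $\isequiv(f)$ is a mere proposition and then transport this property back along $\pi_f$ to conclude that $\isbipequiv(f, \bar{f}_0, \bar{f}_1)$ is a mere proposition.

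For the transport step, I would appeal to the standard fact that equivalences preserve mere-proposition-ness: if $e \co A \to B$ is an equivalence and $B$ is a mere proposition, then for any $x, y \co A$ the type $\Id_A(x,y)$ is equivalent to $\Id_B(ex, ey)$, since an equivalence induces equivalences on all identity types; as $B$ is a mere proposition the latter is contractible, so $\iscontr(\Id_A(x,y))$ is inhabited and $A$ is a mere proposition. I would invoke this rather than reprove it, applying it with $A = \isbipequiv(f, \bar{f}_0, \bar{f}_1)$ and $B = \isequiv(f)$.

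For the core step I would establish $\isprop(\isequiv(f))$ directly from the definition $\isequiv(f) = (\Pi y \co B)\,\iscontr(\hfiber(f,y))$. The two inputs are: (a) for any type $X$, the type $\iscontr(X)$ is itself a mere proposition; and (b) a dependent product $(\Pi y \co B)\,P(y)$ of mere propositions $P(y)$ is again a mere proposition, which follows from the function extensionality principle available in $\Hint$, since it makes the identity type of the product equivalent to a product of identity types, each of which is contractible. Applying (a) with $X = \hfiber(f,y)$ and then (b) yields that $\isequiv(f)$ is a mere proposition.

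I do not expect any genuine obstacle here: the statement is a direct corollary of Proposition~\ref{thm:usemere}, which is why it is stated without a displayed proof. The only thing requiring care is having the auxiliary facts in place --- that $\iscontr(X)$ is always a mere proposition, that $\Pi$-types of mere propositions are mere propositions via function extensionality, and that equivalences transport the property of being a mere proposition --- all of which are routine consequences of the apparatus recalled in Section~\ref{sec:bac}.
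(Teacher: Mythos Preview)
Your proposal is correct and follows exactly the argument the paper has in mind: the corollary is marked with an immediate \qed because it follows directly from Proposition~\ref{thm:usemere} together with the standard fact that $\isequiv(f)$ is a mere proposition and that this property is preserved by equivalence. Your elaboration of the auxiliary facts is sound and merely spells out what the paper takes for granted.
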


\section{Homotopy-initial bipointed types} 
\label{sec:homibt}

\subsection*{Inductive bipointed types} 
As we mentioned at the beginning of Section~\ref{sec:biptm}, if a type $A$ is equivalent to~$\Bool$, then 
it satisfies the counterparts of the elimination and computation rules for $\Bool$ in which the computation rule is 
weakened by replacing the judgmental equality in its conclusion with a propositional equality. Using the notions of a fibered bipointed type and of a bipointed section introduced in Section~\ref{sec:bip}, it is immediate to see that the these rules rules can be expressed equivalently by saying  that every fibered  bipointed type over $A$ has a bipointed 
section (cf.~\cite{Joyal:cathtt}). Since bipointed types $A$ of this kind  play an important role in the following, we introduce some terminology\footnote{We use `inductive' in analogy with the terminology used in set theory. This is not to be confused with the general notion of an inductive type.} to refer to them.

\begin{definition} A bipointed type $A$ is said to be \emph{inductive} if every small fibered bipointed type over it has a bipointed section, \ie the type
\[ 
\isbipind(A) \defeq (\Pi E \co \FibBip(A))  \BipSec(A,E)
\]  
is inhabited. \end{definition} 

As we will see in Proposition~\ref{thm:isbipindisprop}, the type $\isbipind(A)$ is a mere proposition.
We define the type of small inductive bipointed types by letting
\[
\mathsf{BipInd} \defeq (\Sigma A \co \Bip) \isbipind(A) \, .
\]
Thus, a canonical inductive bipointed type is given by a bipointed type $A = (A, a_0, a_1)$ together with a function 
which, given a fibered bipointed type $E = (E, e_0, e_1)$ over $A$, returns a bipointed section of $E$.
Clearly, the type $\Bool$ is an inductive bipointed type. Furthermore, the property of being inductive can be transported along equivalences, in the sense that if $A$ and $B$ are equivalent bipointed types and $A$ is inductive, then so is $B$. Thus, a
type is equivalent to $\Bool$ if and only if it is inductive. Below,
we begin to explore some consequences of the assumption that a bipointed type is inductive, with the goal
of arriving at a 
characterisation of inductive bipointed types in Theorem~\ref{thm:bipointedmain}.

\begin{proposition} \label{thm:inductiverules}
Let $A = (A, a_0, a_1)$ be a bipointed type. Then $A$ is inductive if only if it satisfies the following rules:
\begin{enumerate}[(i)]
\item the elimination rule
\[
\begin{prooftree}
x \co A \vdash E(x) \co \U \qquad
e_0 \co E(a_0) \qquad
e_1 \co E(a_1) 
\justifies
x \co A \vdash \elim(x, e_0, e_1) \co E(x) \, , 
\end{prooftree} 
\]
\item the computation rules 
\[
\begin{prooftree}
x\co A \vdash E(x) \co \U \qquad
e_0 \co E(a_0) \qquad
e_1 \co E(a_1)
\justifies
\comp_k(e_0, e_1) \co \Id \big(    \elim(a_k, e_0, e_1), e_k \big) \, ,
\end{prooftree}  
\]
where $k \in \{ 0, 1\}$.
\end{enumerate}
\end{proposition}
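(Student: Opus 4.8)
The plan is to prove the proposition by unfolding the definitions of $\isbipind(A)$, $\FibBip(A)$ and $\BipSec(A,E)$, and then observing that the data packaged by the elimination and computation rules is, component for component, the data of an element of $\isbipind(A)$. By definition,
\[
\isbipind(A) \defeq (\Pi E \co \FibBip(A)) \, \BipSec(A,E) \, ,
\]
and, unfolding $\FibBip(A)$ and $\BipSec(A,E)$, an inhabitant of this type is precisely an operation sending a fibered bipointed type $(E, e_0, e_1)$, consisting of $E \co A \to \U$ together with $e_0 \co E(a_0)$ and $e_1 \co E(a_1)$, to a bipointed section $(f, \bar{f}_0, \bar{f}_1)$, consisting of a section $f \co (\Pi x \co A) E(x)$ equipped with paths $\bar{f}_0 \co \Id(f a_0, e_0)$ and $\bar{f}_1 \co \Id(f a_1, e_1)$. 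The proposition then follows by matching these three components against the conclusions of the elimination rule and the two computation rules.

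For the direction asserting that an inductive $A$ satisfies the rules, I would fix a witness of $\isbipind(A)$ and, given a premise $x \co A \vdash E(x) \co \U$ with $e_0 \co E(a_0)$ and $e_1 \co E(a_1)$, apply the witness to the fibered bipointed type $(E, e_0, e_1)$ so as to obtain a bipointed section $(f, \bar{f}_0, \bar{f}_1)$. I would then set $\elim(x, e_0, e_1) \defeq f(x)$, which yields by $\Pi$-elimination the conclusion $x \co A \vdash \elim(x, e_0, e_1) \co E(x)$ of the elimination rule, and set $\comp_k(e_0, e_1) \defeq \bar{f}_k$ for $k \in \{0,1\}$, which are exactly the paths required by the computation rules.

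For the converse direction I would reverse this assignment. Given a fibered bipointed type $(E, e_0, e_1)$, the elimination rule produces a family $x \co A \vdash \elim(x, e_0, e_1) \co E(x)$, which by $\Pi$-introduction I package as a section $f \defeq (\lambda x \co A)\elim(x, e_0, e_1) \co (\Pi x \co A) E(x)$; by the computation rule for application one has $f a_k = \elim(a_k, e_0, e_1)$, so the paths $\comp_k(e_0, e_1) \co \Id(\elim(a_k, e_0, e_1), e_k)$ furnished by the two computation rules have exactly the type $\Id(f a_k, e_k)$. Bundling these into the triple $(f, \comp_0(e_0, e_1), \comp_1(e_0, e_1))$ gives a bipointed section of $(E, e_0, e_1)$, and since the construction is uniform in $(E, e_0, e_1)$ it defines the required inhabitant of $\isbipind(A)$.

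Since both directions are a pure translation of syntax, I expect no genuine obstacle; the single point requiring care is the passage between the schematic deduction rules, whose premises and conclusions are judgements-in-context, and the internalised $\Pi/\Sigma$ reformulation of $\isbipind(A)$. Here the judgemental $\eta$-rule for $\Pi$-types available in $\Hint$ is what makes the two round-trips mutually inverse on the nose, identifying the family $x \mapsto \elim(x, e_0, e_1)$ with a genuine section, so that the two rules together carry exactly the same information as a proof that $A$ is inductive rather than merely an equivalent one.
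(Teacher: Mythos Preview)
Your proof is correct and matches the paper's approach: the paper simply records ``Immediate'', and your argument is exactly the unfolding of definitions that justifies this, matching the components of a witness of $\isbipind(A)$ against the elimination and computation rules.
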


\begin{proof} Immediate.
\end{proof}

In the following, when we speak of an inductive bipointed type, we always assume that it
comes equipped with functions $\elim$ and $\comp_k$ (for $k \in \{0, 1\}$) as in Proposition~\ref{thm:inductiverules}.
Note that the rules in Proposition~\ref{thm:inductiverules} are exactly the counterparts for $A$ of 
the elimination rule and the weakening computation rules for $\Bool$ obtained by restricting the eliminating
type to families of small dependent types\footnote{See Remark~\ref{thm:smallelimination} for further discussion of
this point.} and, most importantly,  replacing the judgemental
equality in the conclusion with a propositional one, as mentioned above. The next proposition
shows that, for an inductive bipointed type~$A$,  not only every fibered bipointed type over it
has a section, but that such a section is unique up to a bipointed homotopy.

\begin{proposition} \label{thm:inductiveuniquesec} Let $A = (A, a_0, a_1)$ be a bipointed type. If $A$ is inductive, 
then the following rules are derivable:

\begin{enumerate}[(i)]
\item the $\eta$-rule
\[
\begin{prooftree}
\begin{array}{c} 
x \co A \vdash E(x) \co \U \quad
e_0 \co E(a_0) \quad
e_1 \co E(a_1) \quad
x \co A \vdash f x \co E(x) \quad
\bar{f}_0 \co \Id(fa_0, e_0) \quad
\bar{f}_1 \co \Id(fa_1, e_1)
\end{array}
\justifies
x \co A \vdash \eta_x \co \Id(f x,  \elim(x, e_0, e_1))
\end{prooftree} \bigskip
\]
\item the coherence rule
\[
\begin{prooftree}
\begin{array}{c} 
x \co A \vdash E(x) \co \U \quad
e_0 \co E(a_0) \quad
e_1 \co E(a_1) \quad
x \co A \vdash f(x) \co E(x) \quad
\bar{f}_0 \co \Id(f a_0, e_0) \quad
\bar{f}_1 \co \Id(f a_1, e_1)
\end{array}
\justifies
\bar{\eta}_k \co \Id \big( \comp_k(e_0, e_1)  \ct \eta_{a_k} \, , \;  \bar{f}_k \big)
\end{prooftree}
\]
\noindent
where $k \in \{ 0, 1 \}$.
\end{enumerate}
\end{proposition}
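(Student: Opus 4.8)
The plan is to read the two rules together as the single statement that the given bipointed section $(f, \bar{f}_0, \bar{f}_1)$ of $E$ is bipointed-homotopic to the canonical bipointed section $\big(\elim(-, e_0, e_1), \comp_0(e_0, e_1), \comp_1(e_0, e_1)\big)$ supplied by Proposition~\ref{thm:inductiverules}, and to produce this homotopy by a single application of inductivity to an auxiliary fibered bipointed type of paths. First I would define a small fibered bipointed type $D = (D, d_0, d_1)$ over $A$ with underlying dependent type
\[
x \co A \vdash D(x) \defeq \Id_{E(x)}\big(f x, \elim(x, e_0, e_1)\big) \co \U,
\]
which is small since it is an identity type of the small type $E(x)$. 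For the distinguished elements I would set $d_k \defeq \comp_k(e_0, e_1)^{-1} \ct \bar{f}_k$ for $k \in \{0, 1\}$, using $\bar{f}_k \co \Id(f a_k, e_k)$ together with the inverse of the computation path $\comp_k(e_0, e_1) \co \Id(\elim(a_k, e_0, e_1), e_k)$. Since $d_k \co D(a_k)$, the triple $(D, d_0, d_1)$ is a genuine fibered bipointed type over $A$.

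Next, since $A$ is inductive, the inhabitant of $\isbipind(A)$ applied to $D$ yields a bipointed section $(\eta, \beta_0, \beta_1)$ of $D$, that is, a section $x \co A \vdash \eta_x \co D(x) = \Id(f x, \elim(x, e_0, e_1))$ together with paths $\beta_k \co \Id_{D(a_k)}(\eta_{a_k}, d_k)$. The section $\eta$ is exactly the datum required by the $\eta$-rule in part~(i). For the coherence rule in part~(ii), I would apply the function $p \mapsto \comp_k(e_0, e_1) \ct p$, which sends $\Id(f a_k, \elim(a_k, e_0, e_1))$ to $\Id(f a_k, e_k)$, to the path $\beta_k$ (by congruence of functions on paths), obtaining a path from $\comp_k(e_0, e_1) \ct \eta_{a_k}$ to $\comp_k(e_0, e_1) \ct d_k$. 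As $\comp_k(e_0, e_1) \ct d_k$ equals $\comp_k(e_0, e_1) \ct \big(\comp_k(e_0, e_1)^{-1} \ct \bar{f}_k\big)$, which is propositionally equal to $\bar{f}_k$ by the associativity and inverse groupoid laws, composing these two paths produces the desired $\bar{\eta}_k \co \Id\big(\comp_k(e_0, e_1) \ct \eta_{a_k}, \bar{f}_k\big)$.

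The construction is essentially forced and presents no conceptual difficulty, so the main work is bookkeeping of path directions and compositions. The one point that genuinely requires care is the choice of basepoints $d_k = \comp_k(e_0, e_1)^{-1} \ct \bar{f}_k$: they must be arranged precisely so that the coherence component $\beta_k$ of the section of $D$, after composition with $\comp_k(e_0, e_1)$ and simplification via the groupoid laws, yields exactly the path demanded by the coherence rule rather than an off-by-an-inverse variant. With this choice the residual groupoid-law computation is routine, and the same argument in fact shows that bipointed sections of a fibered bipointed type over an inductive $A$ are unique up to bipointed homotopy in the sense of Lemma~\ref{thm:biphot}.
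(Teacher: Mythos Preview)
Your proposal is correct and matches the paper's own proof essentially line for line: the paper defines the same auxiliary fibered bipointed type $F(x) \defeq \Id_{E(x)}(fx, \elim(x, e_0, e_1))$ with the same distinguished elements $p_k \defeq \comp_k(e_0, e_1)^{-1} \ct \bar{f}_k$, applies the elimination rule to obtain $\eta$, and then uses the computation rule together with the groupoid laws to derive the coherence paths. The only cosmetic difference is that you phrase the appeal to inductivity as producing a bipointed section of $D$, whereas the paper invokes the equivalent elimination and computation rules of Proposition~\ref{thm:inductiverules} directly.
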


Before proving Proposition~\ref{thm:inductiveuniquesec}, observe that the paths in the conclusion of 
the coherence rule can be represented diagrammatically in a way that is reminiscent of one of the triangular
laws for an adjunction\footnote{See Remark~\ref{thm:etarule} for further discussion of this analogy.}:
\[
\xymatrix@C=1.5cm{
f a_k  \ar[r]^-{\eta_{a_k}}  \ar@/_1pc/[dr]_{\bar{f}_k}  
\ar@{}[dr]|{\qquad \Downarrow \; \bar{\eta}_k}  & \elim(a_k, e_0, e_1) \ar[d]^{\varepsilon_k}  \\ 
 & \; e_k  \, , }
  \] 
  where $\varepsilon_k \defeq \comp_k(e_0, e_1)$, for $k \in \{ 0, 1 \}$.

\begin{proof}[Proof of Proposition~\ref{thm:inductiveuniquesec}] Let us assume the premisses of the $\eta$-rule. For $x \co A$, define $F(x) \co \U$  by letting $F(x) \defeq 
\Id_{E(x)}(fx, \elim(x, e_0, e_1))$. 
With this notation, proving the conclusion of the $\eta$-rule amounts to defining
$\eta_x \co F(x)$, for $x \co A$. We do so using the elimination rule for $A$, as stated in Proposition~\ref{thm:inductiverules}.
Thus, we need to find elements $p_k \co F(a_k)$, for $k \in \{0, 1\}$. Since
\[
F(a_k) = \Id(fa_k, \elim(a_k, e_0, e_1)) \, ,
\]
we define $p_k$ as the composite
\[
\xymatrix@C=2.2cm{
 fa_k \ar[r]^{\bar{f}_k} &
 e_k \ar[r]^-{ \comp_k(e_0, e_1)^{-1}}& 
  \elim(a_k, e_0, e_1)  \, .}
\]
For $x \co A$, we can then defined the required element $\eta_x \co F(x)$ by letting $\eta_x \defeq \elim(x, p_0, p_1)$.
In order to prove the coherence rule, note that the computation rule of Proposition~\ref{thm:inductiverules} gives us a path in $\Id(\eta_{a_k},  p_k)$, \ie  $\Id( \eta_{a_k},  \comp_k(e_0, e_1)^{-1} \ct \bar{f}_k ) $. 
The required paths can then be obtained using the groupoid laws.
 \end{proof} 

\begin{proposition} \label{thm:isbipindisprop} For every bipointed type $A = (A, a_0, a_1)$, the type $\isbipind(A)$ is a mere proposition.
\end{proposition}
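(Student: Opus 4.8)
The plan is to exploit the standard fact that a type $X$ is a mere proposition as soon as there is a function $X \to \iscontr(X)$: given any $x \co X$ we obtain $\iscontr(X)$, and a contractible type is a mere proposition, so $\Id_X(x, y)$ is inhabited for all $y \co X$. Applying this with $X \defeq \isbipind(A)$, it suffices to show that \emph{under} the assumption that $A$ is inductive, the type $\isbipind(A)$ is contractible. The point of this reformulation is that contractibility of $\isbipind(A)$ fails when $A$ is not inductive (the type is then empty), so we genuinely need the hypothesis, which the above trick makes available for free.

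So I would assume an element $\iota \co \isbipind(A)$. Since $\isbipind(A) \defeq (\Pi E \co \FibBip(A)) \BipSec(A,E)$ and contractible types are closed under $\Pi$ (using the function extensionality principle that is part of $\Hint$), it is enough to prove that $\BipSec(A,E)$ is contractible for every fibered bipointed type $E = (E, e_0, e_1)$. For the centre of contraction I would take the bipointed section
\[
s \defeq \big( (\lambda x \co A)\, \elim(x, e_0, e_1), \; \comp_0(e_0, e_1), \; \comp_1(e_0, e_1) \big),
\]
whose components are supplied by the elimination and computation rules of Proposition~\ref{thm:inductiverules}, which are available precisely because $A$ is inductive.

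It then remains to connect an arbitrary bipointed section $f = (f, \bar{f}_0, \bar{f}_1)$ to $s$ by a path. Here Proposition~\ref{thm:inductiveuniquesec} does the work: the $\eta$-rule produces a homotopy $\eta \co \Hot(f, \elim(-, e_0, e_1))$, and the coherence rule produces paths $\bar{\eta}_k \co \Id(\comp_k(e_0, e_1) \ct \eta_{a_k}, \bar{f}_k)$ for $k \in \{0,1\}$. After inverting, $\eta$ together with the $\bar{\eta}_k$ assembles into a bipointed homotopy $f \to s$ in the sense of Definition~\ref{def:2cellsection}. By Lemma~\ref{thm:biphot} the canonical map $\Id(f, s) \to \BipHot(f, s)$ is an equivalence, so this bipointed homotopy yields a path $\Id_{\BipSec(A,E)}(f, s)$. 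Hence $s$ is a centre of contraction and $\BipSec(A,E)$ is contractible, completing the argument.

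The only delicate point is the bookkeeping in the last step: one must verify that $\eta$ and the inverted coherence paths $\bar{\eta}_k$ genuinely inhabit the type $\BipHot(f, s)$ as defined, matching the $\bar{g}_k \ct \alpha_{a_k}$ pattern of Definition~\ref{def:2cellsection} with $\bar{g}_k = \comp_k(e_0, e_1)$. This is where the precise form of the coherence rule, rather than mere pointwise uniqueness of sections, is essential, and it is exactly what legitimises passing through Lemma~\ref{thm:biphot} to convert the bipointed homotopy into a path.
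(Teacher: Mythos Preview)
Your proof is correct and follows essentially the same route as the paper: assume the type is inhabited, reduce to the fibres $\BipSec(A,E)$ via closure of $\Pi$ under the relevant truncation level, and then invoke Proposition~\ref{thm:inductiveuniquesec} together with Lemma~\ref{thm:biphot} to produce the required paths. The only cosmetic difference is that you show each $\BipSec(A,E)$ is contractible (with the canonical section as centre), whereas the paper shows it is a mere proposition directly by connecting any two sections; your version is arguably cleaner since Proposition~\ref{thm:inductiveuniquesec} literally gives a homotopy to the canonical section rather than between two arbitrary ones.
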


\begin{proof} Recall that to prove that a type  is a mere proposition, it suffices to do so under the assumption that it is inhabited. Assume therefore that~$\isbipind(A)$ is inhabited. Since the dependent product of a family of mere propositions is again a mere proposition, it suffices to show that~$\BipSec(A,E)$ is a mere proposition for any $E$. But for any two bipointed sections~$f, g \co \BipSec(A,E)$,  there is a 
bipointed homotopy $\alpha \co \BipHot(f,g)$ by Proposition~\ref{thm:inductiveuniquesec} and hence, by 
Lemma~\ref{thm:biphot}, there is a path $p \co \Id(f,g)$, as required. 
\end{proof}

\subsection*{Homotopy-initial bipointed types} 
 Let $A$ be a small bipointed type and assume that it is inductive. 
 We focus on the special case of fibered bipointed types that 
are constant, \ie we have $E(x) = B$ for all $x \co A$, where $B = (B, b_0, b_1)$  is
a small bipointed type. 
 Proposition~\ref{thm:inductiverules} and Proposition~\ref{thm:inductiveuniquesec}
imply that there exists a bipointed morphism~$f \co A \to B$, which is unique in the sense that  for any two bipointed morphisms $(f, \bar{f}_0, \bar{f}_1), (g, \bar{g}_0, \bar{g}_1) \co A \to B$  there is a bipointed 
homotopy~$\alpha \co \Hot(f, g)$. Thus, by Lemma~\ref{BoolHomSpace}, there is a path 
\[
p \co \Id((f, \bar{f}_0, \bar{f}_1), (g, \bar{g}_0, \bar{g}_1)) \, .
\] 
Furthermore, it can be shown that such a path is itself unique up to a higher path, which in turn is unique up to a yet higher path, and so on. 

The key point in our development (described for $\Bool$ below and for $\W$-types in Section~\ref{sec:homta}) is that this sort of weak $\infty$-universality, which apparently involves infinitely much data, can be captured \emph{fully within the system of type theory} (without resorting to coinduction) using ideas inspired by homotopy theory and higher-dimensional category theory. Indeed, in spite of the fact that bipointed types and morphisms do not form a category in a strict sense, it is possible to introduce the  notion of a homotopy-initial bipointed type in completely elementary and explicit terms, as in Definition~\ref{def:BoolInit} below. This provides the template for the definition of a homotopy-initial algebra, which we will introduce in Section~\ref{sec:homta} in relation to $\W$-types.

\begin{definition}\label{def:BoolInit}
A small bipointed type $A$ is said to be \emph{homotopy-initial}  if for any small bipointed type $B$, the type $\BipHom(A,B)$ of bipointed morphisms from $A$ to $B$
is contractible, \ie the type
\[
\ishinit(A) \defeq (\Pi B \co \Bip) \, \iscontr(\BipHom(A, B) )
\] 
is inhabited.
\end{definition}

Let us remark that the uniqueness implicit in Definition~\ref{def:BoolInit} requires that any two bipointed morphisms are propositionally equal as tuples. It should also be noted that the property of being  homotopy-initial  can be transported along equivalences, in the sense that if two bipointed types are equivalent, then one is homotopy-initial if and only if the other one is.  

\begin{proposition} \label{thm:isbiphinitishprop} For every bipointed type $A$, the type $\ishinit(A)$ is a mere proposition.
\end{proposition}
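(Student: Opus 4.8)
The plan is to reduce the statement to two standard facts from the homotopy-theoretic reading of type theory. Unfolding the definition, $\ishinit(A)$ is the dependent product $(\Pi B \co \Bip)\, \iscontr(\BipHom(A,B))$, so it is a $\Pi$-type indexed by $B \co \Bip$ whose fibres are the types $\iscontr(\BipHom(A,B))$. The first fact I would invoke is that, in the presence of function extensionality (which is part of $\Hint$), a dependent product of a family of mere propositions is again a mere proposition; this is precisely the principle already used in the proof of Proposition~\ref{thm:isbipindisprop}. By this principle it suffices to prove that each fibre $\iscontr(\BipHom(A,B))$ is a mere proposition.

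The second fact is the general lemma that $\iscontr(X)$ is a mere proposition for every type $X$ (see \cite{hott,VoevodskyV:notts}). Applying it with $X \defeq \BipHom(A,B)$ closes the argument, since $\ishinit(A)$ is then exhibited as a dependent product of mere propositions.

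Should one wish to make the second fact self-contained, I would argue as follows. Using the principle that a type is a mere proposition as soon as it is one under the hypothesis that it is inhabited, I first assume a given element of $\iscontr(X)$, so that $X$ is contractible. I then take two arbitrary elements $(x,p)$ and $(x',p')$ of $\iscontr(X) = (\Sigma x \co X)(\Pi y \co X)\Id_X(x,y)$ and produce a path between them via the characterisation of identity types of $\Sigma$-types from Section~\ref{sec:chait}: for the first component I take the path $p(x') \co \Id_X(x,x')$, and for the second I use that, since $X$ is contractible, every identity type $\Id_X(x',y)$ is contractible, whence the section type $(\Pi y \co X)\Id_X(x',y)$ is contractible and therefore a mere proposition, so the transported section necessarily agrees with $p'$.

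The argument presents no genuine obstacle; the only point requiring care in the self-contained version of the second fact is the transport bookkeeping in the second $\Sigma$-component, and even that is bypassed once one observes that the relevant $\Pi$-type is contractible, so any two of its elements are equal.
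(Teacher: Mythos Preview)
Your proof is correct and follows exactly the same approach as the paper: invoke that $\iscontr(X)$ is always a mere proposition and that dependent products of mere propositions are mere propositions. The additional self-contained argument you sketch for the former fact is fine but goes beyond what the paper does, which simply cites these two standard lemmas.
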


\begin{proof} Recall that, for a type $X$, the type $\iscontr(X)$ is a mere proposition and that the dependent product of family of mere propositions is again a mere proposition. 
\end{proof} 

The next result is the counterpart of the familiar fact that objects characterized by universal properties are unique up to a unique isomorphism.

\begin{proposition} \label{BoolHInitIso} 
Homotopy-initial small bipointed types are unique up to a contractible type of bipointed equivalences, i.e. the type
\[ 
(\Pi A \co \Bip) (\Pi B \co \Bip)
\big( \isbiphinit(A) \times \isbiphinit(B) \to \iscontr(\BipEquiv(A,B)) \big) \, .
\] 
is inhabited.
\end{proposition}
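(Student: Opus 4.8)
The plan is to fix two bipointed types $A$ and $B$ together with proofs $i_A \co \isbiphinit(A)$ and $i_B \co \isbiphinit(B)$, and to show that $\BipEquiv(A,B)$ is contractible. The overall strategy is the familiar ``an inhabited mere proposition is contractible'' argument: I will first exhibit a single bipointed equivalence $A \to B$, and then argue that the whole type $\BipEquiv(A,B)$ is a mere proposition.

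First I would unfold homotopy-initiality. From $i_A$ one obtains that $\BipHom(A,B)$ and $\BipHom(A,A)$ are contractible, and from $i_B$ that $\BipHom(B,A)$ and $\BipHom(B,B)$ are contractible. Let $f \co \BipHom(A,B)$ and $g \co \BipHom(B,A)$ be the centres of contraction of the first and third of these. The composite $g \circ f$ and the identity $1_A$ are both elements of the contractible type $\BipHom(A,A)$, so they are propositionally equal, giving a path $p \co \Id_{\Bip(A,A)}(g \circ f, 1_A)$; symmetrically, $f \circ g$ and $1_B$ both lie in the contractible type $\BipHom(B,B)$, yielding a path $q \co \Id_{\Bip(B,B)}(f \circ g, 1_B)$. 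Taking $g$ to serve simultaneously as the left and the right bipointed inverse, the resulting tuple inhabits $\isbipequiv(f)$, so that $(f, \ldots) \co \BipEquiv(A,B)$. In particular $\BipEquiv(A,B)$ is inhabited.

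Next I would show that $\BipEquiv(A,B)$ is a mere proposition. By definition it is the $\Sigma$-type $(\Sigma f \co \BipHom(A,B))\, \isbipequiv(f)$. Its base $\BipHom(A,B)$ is contractible, hence a mere proposition, and by the Corollary following Proposition~\ref{thm:usemere} each fibre $\isbipequiv(f)$ is a mere proposition. Since a $\Sigma$-type whose base and all of whose fibres are mere propositions is itself a mere proposition, $\BipEquiv(A,B)$ is a mere proposition. Combining this with the previous step via the fact that an inhabited mere proposition is contractible gives $\iscontr(\BipEquiv(A,B))$, which is exactly the conclusion required for the chosen $A$, $B$, $i_A$, $i_B$; discharging these assumptions produces the desired inhabitant of the $\Pi$-type.

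The argument is essentially routine once the ingredients are assembled, and the single point deserving genuine care is the appeal to the Corollary after Proposition~\ref{thm:usemere} that $\isbipequiv(f)$ is a mere proposition: without it the fibres of the $\Sigma$-type need not be propositions, and the ``inhabited implies contractible'' shortcut would break down. As an alternative to the mere-proposition argument for the total space, one could instead use that a $\Sigma$-type over a contractible base is equivalent to the fibre over its centre, thereby reducing contractibility of $\BipEquiv(A,B)$ directly to contractibility of $\isbipequiv(f)$, which again holds because it is an inhabited mere proposition.
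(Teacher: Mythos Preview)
Your proposal is correct and follows essentially the same route as the paper: both arguments combine the contractibility of $\BipHom(A,B)$, the fact that $\isbipequiv(f)$ is a mere proposition (the Corollary to Proposition~\ref{thm:usemere}), and the construction of a two-sided bipointed inverse from homotopy-initiality of $A$ and $B$. The only cosmetic difference is ordering: the paper first reduces to showing each fibre $\isbipequiv(f)$ is contractible and then inhabits it, whereas you first build the inhabitant and then argue the total space is a mere proposition; your alternative at the end (reducing via $\Sigma$ over a contractible base to the fibre) is in fact the closest match to the paper's phrasing.
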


\begin{proof} 
Let $A$ and $B$ be homotopy-initial bipointed types. The type 
$\BipHom( A, B)$ is contractible by homotopy-initiality of $A$. Since the dependent sum of a family of mere propositions over a mere proposition is again a mere proposition, it suffices to prove that the type $\isbipequiv(f)$ is contractible for any bipointed morphism $f \co A \to B$. This type is a mere proposition by 
Proposition~\ref{thm:usemere}, and thus it suffices to show it is inhabited. But the existence of a right and a left bipointed inverse for $f$ follows immediately
by the assumption that $A$ and $B$ are homotopy-initial.
\end{proof}

The next proposition spells out a  characterization of homotopy-initial bipointed types in terms of type-theoretic rules.

\begin{proposition} \label{thm:hinitrules}
A small bipointed type $A = (A, a_0, a_1)$ is homotopy-initial if and only if it satisfies
 the following rules:
 
 \begin{enumerate}[(i)]
 \item the recursion rule
 \[
\begin{prooftree}
B \co \U \qquad
b_0 \co B \qquad
b_1 \co B 
\justifies
x \co A \vdash \rec(x, b_0, b_1) \co B \, , 
\end{prooftree} 
\]
\item the $\beta$-rules
\[
\begin{prooftree}
B \co \U \qquad
b_0 \co B  \qquad
b_1 \co B
\justifies
\beta_k \co \Id(  \rec(a_k, b_0, b_1), b_k ) \, , 
\end{prooftree}  
\]
where $k \in \{0, 1\}$, 
\item the $\eta$-rule
\[
\begin{prooftree}
(B, b_0, b_1) \co \Bip \quad
(f, \bar{f}_0, \bar{f}_1) \co \Bip(A,B)
\justifies
x \co A \vdash \eta_x \co \Id(fx, \rec(x, b_0, b_1) ) \, , 
\end{prooftree}  
\]
\item the $(\beta, \eta)$-coherence rule
\[
\begin{prooftree}
(B, b_0, b_1) \co \Bip \quad
(f, \bar{f}_0, \bar{f}_1) \co \Bip(A,B) 
\justifies
\bar{\eta}_k \co \Id( \beta_k \ct \eta_{a_k} \, , \; \bar{f}_k) \, , 
\end{prooftree}
\]
 where $k \in \{ 0, 1 \}$.
 \end{enumerate}
\end{proposition}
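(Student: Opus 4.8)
The plan is to recognize that the four rules together constitute an unfolding of the contractibility of $\BipHom(A, B)$, with rules (i)--(ii) encoding the existence of a center of contraction and rules (iii)--(iv) encoding that every bipointed morphism is propositionally equal to that center. Recall that $\iscontr(X) \defeq (\Sigma x \co X)(\Pi y \co X)\,\Id_X(x, y)$, so showing $A$ homotopy-initial amounts, for each small bipointed type $B = (B, b_0, b_1)$, to producing a single bipointed morphism together with a proof that every bipointed morphism is equal to it. Since the premisses of rules (i)--(iv) carry precisely the data of a small bipointed type $B = (B, b_0, b_1) \co \Bip$ (and, in (iii)--(iv), a bipointed morphism), the quantification involved matches that of $\ishinit(A)$.

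First I would handle the existence part. By definition a bipointed morphism $A \to B$ is a function $A \to B$ together with two paths witnessing preservation of the distinguished points; the recursion rule (i) supplies the underlying function $\rec(-, b_0, b_1)$ and the $\beta$-rules (ii) supply exactly the paths $\beta_k \co \Id(\rec(a_k, b_0, b_1), b_k)$. Thus $(\rec(-, b_0, b_1), \beta_0, \beta_1)$ is literally a bipointed morphism, and conversely a center of contraction for $\BipHom(A, B)$ unfolds to this same data. Hence rules (i)--(ii) are interderivable with the existence of a center of contraction.

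For the uniqueness part I would invoke Lemma~\ref{BoolHomSpace}, which provides an equivalence between $\Id((f, \bar{f}_0, \bar{f}_1), (g, \bar{g}_0, \bar{g}_1))$ and the type of bipointed homotopies $\BipHot(f, g)$. Taking $g$ to be the center $(\rec(-, b_0, b_1), \beta_0, \beta_1)$, a bipointed homotopy from an arbitrary $(f, \bar{f}_0, \bar{f}_1)$ to the center consists, by Definition~\ref{thm:biphomotopy}, of a homotopy $\alpha \co \Hot(f, \rec(-, b_0, b_1))$ together with paths $\bar{\alpha}_k \co \Id(\bar{f}_k, \beta_k \ct \alpha_{a_k})$. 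The homotopy component is exactly the family $\eta_x$ of the $\eta$-rule (iii), and the coherence component is supplied by rule (iv), after a single path inversion (since (iv) yields $\Id(\beta_k \ct \eta_{a_k}, \bar{f}_k)$, whose inverse is the required $\Id(\bar{f}_k, \beta_k \ct \eta_{a_k})$). Thus rules (iii)--(iv) are interderivable, via Lemma~\ref{BoolHomSpace}, with the statement that every bipointed morphism is propositionally equal to the center.

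Combining the two halves gives both directions. Assuming $\ishinit(A)$, the center of $\iscontr(\BipHom(A, B))$ yields rules (i)--(ii), and applying the extension map $\ext^{\Bip}$ of Lemma~\ref{BoolHomSpace} to the contraction witnesses yields rules (iii)--(iv). Conversely, assuming the four rules, rules (i)--(ii) build the center and, for each $(f, \bar{f}_0, \bar{f}_1)$, rules (iii)--(iv) assemble a bipointed homotopy which the inverse of $\ext^{\Bip}$ transports into the required identity, exhibiting $\BipHom(A, B)$ as contractible for every $B \co \Bip$. The proof is thus essentially a direct unfolding of definitions; the only delicate point, and the one I expect to require the most care, is the bookkeeping of path orientations between rule (iv) and the definition of a bipointed homotopy, resolved by the path inversion noted above.
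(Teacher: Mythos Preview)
Your proposal is correct and matches the paper's approach, which simply states that the claim follows by unfolding the definition of homotopy-initiality. You have spelled out that unfolding in detail: rules~(i)--(ii) package the center of contraction of $\BipHom(A,B)$, and rules~(iii)--(iv) together with Lemma~\ref{BoolHomSpace} (and a harmless path inversion) package the contracting paths.
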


\begin{proof} The claim follows by unfolding the definition of homotopy-initiality.
\end{proof} 

\begin{remark} \label{thm:etarule} The terminology used for the rules in Proposition~\ref{thm:hinitrules} is
inspired by the special case that arises by considering $B$ to be $A$ itself and $f$ to be the identity function.
In this case, we obtain a function $(\lambda x \co A) \rec(x, a_0, a_1) \co A \to A$, paths $\beta_k \co
\Id( \rec(a_k, a_0, a_1), a_k)$, for $k \in \{ 0, 1 \}$ and $\eta_x \co \Id(x, \rec(x, a_0, a_1)$ and higher paths
$\bar{\eta}_k$ fitting in the diagram
\[
\xymatrix@C=1.5cm{
a_k \ar[r]^-{\eta_{a_k}} \ar@/_1pc/[dr]_{1_{a_k}} \ar@{}[dr]|{\qquad \overset{\bar{\eta}_k\; }{\Rightarrow}} & \rec(a_k, a_0, a_1) \ar[d]^{\beta_k} \\ 
 & a_k\, ,}
 \]
 for $k \in \{ 0, 1\}$,  which are analogous to one of the triangle laws for an adjunction. 
\end{remark}

The next theorem provides a characterisation of inductive bipointed types.

\begin{theorem}\label{thm:bipointedmain} A small bipointed type 
 $A$ is inductive if and only if it is homotopy-initial, \ie  the type
\[
(\Pi A \co \Bip) \big(  \isbiphinit(A) \leftrightarrow \isbipind(A) \big)
\] 
is inhabited
\end{theorem}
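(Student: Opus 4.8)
The plan is to prove the two implications separately, each time using the rule-based characterizations already established rather than manipulating the contractibility conditions directly. Concretely, I would first invoke Proposition~\ref{thm:inductiverules} and Proposition~\ref{thm:hinitrules} to replace the abstract predicates $\isbipind(A)$ and $\isbiphinit(A)$ by the explicit packages of elimination/computation and recursion/$\beta$/$\eta$/coherence rules; this reduces the biconditional to a statement about deriving one family of rules from the other. Since by Proposition~\ref{thm:isbipindisprop} and Proposition~\ref{thm:isbiphinitishprop} both $\isbipind(A)$ and $\isbiphinit(A)$ are mere propositions, it suffices to produce the two implications as bare logical entailments, and I need not worry about the higher coherence of the maps I construct.

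For the direction $\isbipind(A) \to \isbiphinit(A)$, I would start from an inductive $A$ and, given an arbitrary small bipointed type $B = (B, b_0, b_1)$, show that $\BipHom(A, B)$ is contractible. The center of contraction comes from the constant fibered bipointed type $E(x) \defeq B$ over $A$: a bipointed section of this constant family is exactly a bipointed morphism $A \to B$, so the elimination rule of Proposition~\ref{thm:inductiverules} together with its computation rules yields a canonical $\rec$ and the paths $\beta_k$, i.e.\ precisely the recursion and $\beta$ rules of Proposition~\ref{thm:hinitrules}. For the contraction itself, I would use Proposition~\ref{thm:inductiveuniquesec}: specializing its $\eta$-rule and coherence rule to the constant family $E(x) = B$ gives, for any bipointed morphism $(f, \bar f_0, \bar f_1)$, a bipointed homotopy from it to the canonical one, whence by Lemma~\ref{BoolHomSpace} a path in $\Id((f,\bar f_0,\bar f_1),(\rec,\beta_0,\beta_1))$. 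A section together with such a path to it from every point is exactly an inhabitant of $\iscontr(\BipHom(A,B))$, so $A$ is homotopy-initial.

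For the converse $\isbiphinit(A) \to \isbipind(A)$, I would unfold homotopy-initiality and, given a small fibered bipointed type $E = (E, e_0, e_1)$ over $A$, produce a bipointed section. The idea is to pass to the total space: form the associated bipointed type $E' \defeq ((\Sigma x \co A)E(x),\, e'_0,\, e'_1)$ with $e'_k \defeq \pair(a_k, e_k)$, so that $\pi_1 \co E' \to A$ is a bipointed morphism. Homotopy-initiality of $A$ gives a bipointed morphism $s \co A \to E'$, and contractibility of $\BipHom(A,A)$ forces $\pi_1 \circ s$ to be propositionally equal to $1_A$. Transporting along this path turns $s$ into a genuine section in the sense that $\pi_1 \circ s \sim 1_A$, from which one extracts a dependent function $f \co (\Pi x \co A)E(x)$ together with the paths $\bar f_k$ making it a bipointed section of $E$; this is an inhabitant of $\BipSec(A,E)$, hence of $\isbipind(A)$.

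The main obstacle, and the step I expect to require the most care, is the converse direction: recovering an honest dependent section $f \co (\Pi x \co A)E(x)$ from the merely-a-right-inverse morphism $s \co A \to E'$. The composite $\pi_1 \circ s$ is only \emph{propositionally} equal to $1_A$, not judgementally, so I cannot simply read off the second component of $s(x)$ as lying in $E(x)$; I must transport along the homotopy $\ext(p) \co \pi_1 \circ s \sim 1_A$ (where $p$ is the path from homotopy-initiality), and then verify that the transported basepoint data still matches $e_0$ and $e_1$ up to the required paths $\bar f_k$. Checking that these transports cohere with the bipointed structure of $s$ and yield the correct witnesses $\bar f_k \co \Id_{E(a_k)}(f a_k, e_k)$ is the genuinely delicate bookkeeping; everything else is a direct application of the lemmas and propositions established above.
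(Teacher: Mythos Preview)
Your proposal is correct and follows essentially the same route as the paper: the forward direction specializes the elimination/computation and $\eta$/coherence rules (Propositions~\ref{thm:inductiverules} and~\ref{thm:inductiveuniquesec}) to constant families to obtain the rules of Proposition~\ref{thm:hinitrules}, and the converse builds the total bipointed type $E'$, uses homotopy-initiality to get a morphism into $E'$ and to identify $\pi_1\circ s$ with $1_A$, then transports the second component along this homotopy to obtain the bipointed section, with the coherence of the basepoints being the only nontrivial verification. One minor remark: the appeal to Propositions~\ref{thm:isbipindisprop} and~\ref{thm:isbiphinitishprop} is unnecessary here, since the theorem as stated only asks for the logical biconditional $\leftrightarrow$, not an equivalence of types; the mere-proposition observation is used only in the subsequent corollary.
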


\begin{proof} Let $A = (A, a_0, a_1)$ be a small bipointed type. We prove the two 
implications separately. 

First, we show that if $A$ is inductive then
it is homotopy-initial. For this, it is sufficient to 
observe that the rules characterizing homotopy-initial bipointed types in Proposition~\ref{thm:hinitrules}
are special cases of the rules in Proposition~\ref{thm:inductiverules} and Lemma~\ref{thm:inductiveuniquesec},
which are provable for inductive bipointed types. 

Secondly, let us assume that $A = (A, a_0, a_1)$ is homotopy-initial and prove that it is inductive. 
For this, let~$E = (E, e_0, e_1)$ be a fibered small bipointed type over $A$. We need to show that there
exists a bipointed section $(s, \bar{s}_0, \bar{s}_1) \co \BipSec(A,E)$.
Let us consider the bipointed type associated to $E$, with carrier 
$E' \defeq (\Sigma x \co A) E(x)$ 
and distinguished elements  $e'_k \defeq \pair(a_k, e_k)$, 
for~$k \in \{ 0, 1 \}$. In this way,  the first projection $\pi_1 \co 
E' \to A$ is a bipointed morphism. By the homotopy-initiality of $A$, we have a bipointed morphism 
$(f, \bar{f}_0, \bar{f}_1) \co (A, a_0, a_1)  \to (E', e'_0, e'_1)$, 
which we represent with the diagram
\[
\xymatrix@C=2cm{
1 \ar[r]^{a_0} \ar@{=}[d] \ar@{}[dr]|{\Downarrow \, \bar{f}_0} & A  \ar[d]^f & 1 \ar[l]_{a_1} \ar@{=}[d]
 \ar@{}[dl]|{\Downarrow \, \bar{f}_1}   \\
1 \ar[r]_{e'_0}  & E' & 1 \ar[l]^{e'_1} }
 \]
 We can compose $f \co A \to E'$ with $\pi_1 \co E' \to A$ and obtain a bipointed morphism $\pi_1  f \co A \to A$, which is represented by the diagram
  \[
\xymatrix@C=2cm@R=1.2cm{
1  \ar[r]^{a_0} \ar@{=}[d]  \ar@{}[dr]|{\Downarrow \, \bar{f}_0}  & A  \ar[d]^f & 1 \ar[l]_{a_1}  \ar@{=}[d]  
 \ar@{}[dl]|{\Downarrow \, \bar{f}_1} \\
\ar[r]^{e'_0} \ar@{=}[d]   \ar@{}[dr]|{\Downarrow \, \bar{\pi}_0}  & E' \ar[d]^{\pi_1}  & 1 \ar[l]_{e'_1} \ar@{=}[d] 
 \ar@{}[dl]|{\Downarrow \, \bar{\pi}_1}  \\
1 \ar[r]_{a_0} & A &  \; 1  \, . \ar[l]^{a_1}}
 \]
Since the identity $1_A \co A \to A$ is also a bipointed morphism, by the homotopy-initiality of $A$ there is an element of $\Id_{\Bip(A,A)}(\pi_1  f, 1_A)$. 
 By Lemma~\ref{BoolHomSpace}, this gives us a bipointed homotopy $(\alpha,
\bar{\alpha}_0,\bar{\alpha}_1) \co \BipHot( \pi_1  f , 1_A)$. This amounts to a homotopy $\alpha \co \Hot(\pi_1 
 f, 1_A)$ and paths
\begin{equation*}
\bar{\alpha}_k \co  \Id( \overline{(\pi_1  f)}_k \, , \;  \alpha_{a_k} \cdot 1_{a_k} ) \, ,
\end{equation*}
for $k \in \{ 0, 1 \}$. We begin to define the required bipointed section by defining, for~$x \co A$, 
\begin{equation*}
% \label{equ:defreqsection}
s(x) \defeq (\alpha_x)_{!} \big( \pi_2 f  x \big) \, ,
\end{equation*}
where $(\alpha_x)_{!} \co E(\pi_1 f x) \to E(x)$. 
We now construct  paths~$\bar{s}_k \co \Id(s a_k, e_k)$, for $k \in \{ 0, 1 \}$. First of all, recall that 
$\bar{f}_k \co \Id( f a_k, e'_k)$, where $e'_k = \pair(a_k, e_k) \co (\Sigma x \co A) E(x)$. Using the 
characterization of identity types of $\Sigma$-types, we define
\[
p \defeq \pi_1 \, \ext^\Sigma(\bar{f}_k)  \co  \Id( \pi_1 f  a_k \, ,\;   \pi_1 e'_k )  \, ,  \quad 
q \defeq  \pi_2 \, \ext^\Sigma(\bar{f}_k) \co   \Id( p_{!}( \pi_2 f  a_k), \pi_2 e'_k)   \, .
\]
Now, note that
\[
 \Id_A( \pi_1 f  a_k \, ,\;   \pi_1 e'_k )  = \Id_A(\pi_1 f a_k, a_k) \, , \quad 
 \Id_{E(a_k)}( p_{!}( \pi_2 f  a_k), \pi_2 e'_k)  = \Id_{E(a_k)}( sa_k, e_k) 
 \]
and that we have 
\begin{alignat*}{4}  
\overline{(\pi_1  f)}_k  & \iso  (\overline{\pi_1})_k \ct (\pi_1 \circ \bar{f}_k)  & & \qquad (\text{by definition of } \pi_1 f)   \\
& \iso  1_{a_k}  \ct (\pi_1 \circ \bar{f}_k)  & & \qquad (\text{by definition of } \pi_1)  \\
 & \iso (\pi_1 \circ \bar{f}_k) & & \qquad (\text{by the groupoid laws}) \\
 & \iso p  & & \qquad (\text{by definition of } \ext^\Sigma) \,  . \\
 \intertext{Therefore, we can construct the following chain of paths:}
p & \iso  \overline{(\pi_1 f)}_k & &  \qquad (\text{by what we just proved})  \\
  & \iso 1_{a_k} \ct  \alpha_{a_k}  & &  \qquad (\text{by the path } \bar{\alpha}_k ) \\ 
  & \iso  \alpha_{a_k}  & & \qquad (\text{by the groupoid laws}) \\
\intertext{Hence,  the required path $\bar{s}_k \co  \Id(s a_k,  e_k)$ can be defined as the following composite:}
s a_k & \deq (\alpha_{a_k})_{!} \big( \pi_2 f a_k   \big) & & \qquad (\text{by the definition of } s) \\
 &              \iso  p_{!} \big( \pi_2 f a_k   \big) & & \qquad (\text{since } p  \iso \alpha_{a_k}) \\
   &            \iso  e_k  & &  \qquad (\text{by the path } q)  \, .
   \end{alignat*} 
   This concludes the proof.
\end{proof}

The proof of Theorem~\ref{thm:bipointedmain} simplifies considerably within the extensional
type theory~$\Hext$ obtained by adding to $\Hint$ the identity reflection rule in~\eqref{equ:collapse}. In that type theory, 
there is a judgemental equality 
between the composite $\pi_1 f \co A \to A$ and the identity $1_A \co A \to A$, with which the
rest of the argument can be shortened considerably. In that setting, one obtains the familiar characterisation 
of an inductive type as strict initial algebras.

\medskip

Theorem~\ref{thm:bipointedmain} gives a logical equivalence between two types, but in fact we 
have a genuine equivalence of types, as the following corollary shows.

\begin{corollary} For a bipointed type $A$, there is an equivalence of types $\isbipind(A)\simeq   \isbiphinit(A)$.
\end{corollary}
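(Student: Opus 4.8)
The plan is to upgrade the logical equivalence of Theorem~\ref{thm:bipointedmain} to a genuine equivalence of types by exploiting the fact that both $\isbipind(A)$ and $\isbiphinit(A)$ are mere propositions. Recall that $\isbipind(A)$ is a mere proposition by Proposition~\ref{thm:isbipindisprop}, and that $\isbiphinit(A)$ is a mere proposition by Proposition~\ref{thm:isbiphinitishprop}. The key observation is the standard fact, recorded informally in Section~\ref{sec:bac}, that a function with a two-sided inverse is an equivalence; combined with the mere-proposition structure, this means that any logical equivalence between mere propositions is automatically an equivalence of types.

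To make this precise, let $\varphi \co \isbiphinit(A) \to \isbipind(A)$ and $\psi \co \isbipind(A) \to \isbiphinit(A)$ be the two functions furnished by Theorem~\ref{thm:bipointedmain}. I would then show that $\varphi$ is an equivalence by exhibiting $\psi$ as a two-sided inverse. For the composite $\psi \circ \varphi \co \isbiphinit(A) \to \isbiphinit(A)$, since $\isbiphinit(A)$ is a mere proposition, for every $x \co \isbiphinit(A)$ the elements $(\psi \circ \varphi)(x)$ and $x$ are propositionally equal; by function extensionality this assembles into a homotopy $\psi \circ \varphi \sim 1$. The same argument applied to $\varphi \circ \psi$, now using that $\isbipind(A)$ is a mere proposition, gives $\varphi \circ \psi \sim 1$. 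Hence $\varphi$ has a two-sided inverse and is therefore an equivalence of types, yielding the desired $\isbipind(A)\simeq \isbiphinit(A)$.

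Since the whole argument rests only on the two mere-proposition lemmas already established and on the elementary characterization of equivalences as functions with a two-sided inverse, there is no substantial obstacle here: the proof is a routine instance of the general principle that a biconditional between types of h-level $1$ is an equivalence. The sole point requiring care is the bookkeeping ensuring that the homotopies witnessing the two-sided inverse are genuinely produced from the mere-proposition structure via function extensionality, but this is immediate and needs no further ingredients.
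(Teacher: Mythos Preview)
Your proposal is correct and follows essentially the same approach as the paper: invoke the logical equivalence from Theorem~\ref{thm:bipointedmain} and then use that both $\isbipind(A)$ and $\isbiphinit(A)$ are mere propositions (Propositions~\ref{thm:isbipindisprop} and~\ref{thm:isbiphinitishprop}) to upgrade it to a type equivalence. You have simply spelled out in more detail the standard fact that a biconditional between mere propositions is an equivalence, which the paper takes for granted.
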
 

\begin{proof} Theorem~\ref{thm:bipointedmain} gives a logical equivalence, but  $\isbipind(A)$ is a 
mere proposition by Proposition~\ref{thm:isbipindisprop} and $\isbiphinit(A)$ is a mere proposition
by Proposition~\ref{thm:isbiphinitishprop}. 
\end{proof}

The next proposition characterizes the
type $\Bool$ up to equivalence. In its statement, we refer to the rules for $\Bool$ in
Table~\ref{tab:boolrules}.

\begin{corollary} Assuming the rules for the type $\Bool$, for a bipointed type $A = (A, a_0, a_1)$, the following 
conditions are equivalent:
\begin{enumerate}[(i)]
\item $A$ is inductive,
\item $A$ is homotopy-initial,
\item $A$ and $\Bool$ are equivalent as bipointed types.
\end{enumerate}
In particular, $\Bool$ is a homotopy-initial bipointed type. \qed
\end{corollary}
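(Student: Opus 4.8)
The plan is to build on the logical equivalence (i) $\Leftrightarrow$ (ii) already established in Theorem~\ref{thm:bipointedmain} and to close the cycle by proving (iii) $\Rightarrow$ (i) and (ii) $\Rightarrow$ (iii); together these yield (iii) $\Rightarrow$ (i) $\Leftrightarrow$ (ii) $\Rightarrow$ (iii), so all three conditions are equivalent. Throughout I would rely on two facts recorded earlier in this section: that $(\Bool, 0, 1)$ is an inductive bipointed type, and that inductiveness is transported along bipointed equivalences.

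For (iii) $\Rightarrow$ (i), I would first confirm that $\Bool$ is inductive. This amounts to unwinding $\isbipind(\Bool)$: given a small fibered bipointed type $(E, e_0, e_1)$ over $\Bool$, the elimination rule of Table~\ref{tab:boolrules}, applied to the family $E$ with base data $e_0, e_1$, produces a section $\boolind(-, e_0, e_1)$ of $E$, while the two computation rules supply judgemental---hence a fortiori propositional---equalities $\Id(\boolind(k, e_0, e_1), e_k)$ for $k \in \{0, 1\}$, which are exactly the paths required by $\BipSec(\Bool, E)$. Granting this, if $A$ is equivalent to $\Bool$ as bipointed types, then transporting inductiveness of $\Bool$ along that bipointed equivalence yields inductiveness of $A$.

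For (ii) $\Rightarrow$ (iii), I would note that $\Bool$ is itself homotopy-initial, since it is inductive and inductiveness implies homotopy-initiality by the implication (i) $\Rightarrow$ (ii) of Theorem~\ref{thm:bipointedmain}. Now if $A$ is homotopy-initial, then $A$ and $\Bool$ are both homotopy-initial, so Proposition~\ref{BoolHInitIso} tells us that the type $\BipEquiv(A, \Bool)$ is contractible, and in particular inhabited; an inhabitant is precisely a bipointed equivalence witnessing (iii). The concluding claim that $\Bool$ is homotopy-initial is then immediate by taking $A = \Bool$ in the equivalence (i) $\Leftrightarrow$ (ii) and invoking inductiveness of $\Bool$. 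The only step requiring genuine verification is that $\Bool$ is inductive, but this is routine bookkeeping: the induction and computation rules for $\Bool$ are designed to match the definition of $\isbipind$, so no real obstacle arises. Everything else is a direct application of results already in hand.
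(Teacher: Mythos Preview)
Your proposal is correct and follows essentially the same approach the paper has in mind: the corollary is stated with \qed\ precisely because it is immediate from Theorem~\ref{thm:bipointedmain}, Proposition~\ref{BoolHInitIso}, and the facts (recorded just before Proposition~\ref{thm:inductiverules}) that $\Bool$ is inductive and that inductiveness transports along bipointed equivalences. Your decomposition (iii) $\Rightarrow$ (i) $\Leftrightarrow$ (ii) $\Rightarrow$ (iii) is the intended one.
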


\begin{remark} \label{thm:smallelimination} Note that the elimination rules for $\Bool$ allow us to eliminate 
over an arbitrary, \ie not necessarily small, dependent type. Instead, the definition of an inductive bipointed
type involve the existence of sections over small fibered bipointed types. In spite of this apparent difference,
since $\Bool$ is assumed to be a small type, one can prove an equivalence between any inductive type $A$
and $\Bool$ and hence derive counterparts of the elimination rules for $\Bool$ for any inductive bipointed type. 

Let us point out that there are at least two alternatives to the approach taken here regarding universes. The first involves avoiding the restriction
to \emph{small} fibered bipointed types in the definition of the notion of an inductive bipointed type. Accordingly,
one drops the restriction of mapping into \emph{small} bipointed types in the definition of a notion of a homotopy-initial
algebra. With these changes, there is still a logical equivalence between the modified notions, but this is no longer an internal statement in the type theory, as in Theorem~\ref{thm:bipointedmain}. Alternatively, one could assume to
have a hiearchy of type universes $\U_0 \co \U_1 \co \; \ldots \; \co \U_n \co \U_{n+1} \co \ldots$ and modify the elimination rules for $\Bool$ by specifying that the types into which we are eliminating belong to some universe. 
A counterpart of Theorem~\ref{thm:bipointedmain}, now stated with appropriate universe levels, would still hold.
 \end{remark}

\subsection*{Univalence for bipointed types} \label{sec:unibip}
We conclude this section by showing that if the type universe~$\U$ is assumed to be univalent, then a form of the univalence axiom holds also for bipointed types, in the sense made precise by the next theorem, where we
use notation analogous to the one introduced for extension functions in Section~\ref{sec:bac}. This is an
instance of the Structure Identity Principle considered in~\cite{AczelP:stripu}.

\begin{theorem}  \label{thm:bipunivalence} Assuming the univalence axiom, 
for small bipointed types $A, B \co \Bip$, the  canonical function
\[ 
\ext^{\Bip}_{A,B} \co \Id_{\Bip} \big(A,B\big) \to  \BipEquiv(A,B) 
\] 
is an equivalence.
\end{theorem}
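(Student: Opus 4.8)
The plan is to show that $\ext^{\Bip}_{A,B}$ is an equivalence for all $B$ by the total-space (based-path) method, rather than by exhibiting a chain of equivalences and then checking that its composite agrees with the canonical map; the total-space method has the advantage that it only uses the value of $\ext^{\Bip}$ on $\refl$. Fix a small bipointed type $A$. Since $\ext^{\Bip}_{A,B}$ is defined by $\Id$-elimination, sending $\refl_A$ to the identity bipointed equivalence $1_A \co \BipEquiv(A,A)$, it is precisely the fibered map $\prd{B \co \Bip}\big(\Id_{\Bip}(A,B) \to \BipEquiv(A,B)\big)$ induced by $1_A$. Recall the standard fact that a fibered map is a fiberwise equivalence if and only if the induced map on total spaces is an equivalence, together with the observation that the domain total space $\sm{B \co \Bip}\Id_{\Bip}(A,B)$ is contractible, being a based path space. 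Since a map out of a contractible type is an equivalence exactly when its codomain is contractible, the whole theorem reduces to proving that $\sm{B \co \Bip}\BipEquiv(A,B)$ is contractible.

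To establish this contractibility, I would first unfold the definitions. Writing $B = (B, b_0, b_1)$ and spelling out $\BipEquiv$ and $\BipHom$, the type $\sm{B \co \Bip}\BipEquiv(A,B)$ expands to
\[
\sm{B \co \U}\sm{b_0 \co B}\sm{b_1 \co B}\sm{f \co A \to B}\sm{\bar f_0 \co \Id(f a_0, b_0)}\sm{\bar f_1 \co \Id(f a_1, b_1)} \isbipequiv(f, \bar f_0, \bar f_1) \, .
\]
By Proposition~\ref{thm:usemere}, the fiberwise equivalence $\isbipequiv(f, \bar f_0, \bar f_1) \simeq \isequiv(f)$ lets me replace the final factor by $\isequiv(f)$, which depends only on the underlying function $f$ and not on $\bar f_0, \bar f_1$. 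I would then reorder the iterated $\Sigma$-type, pulling $f$ and $\isequiv(f)$ to the front and pairing $b_0$ with $\bar f_0 \co \Id(f a_0, b_0)$ (and likewise $b_1$ with $\bar f_1$). The resulting factors $\sm{b_0 \co B}\Id(f a_0, b_0)$ and $\sm{b_1 \co B}\Id(f a_1, b_1)$ are based path spaces, hence contractible, and may be discarded. What remains is exactly
\[
\sm{B \co \U}\sm{f \co A \to B}\isequiv(f) \; = \; \sm{B \co \U}\Eq(A,B) \, .
\]

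Finally, the univalence axiom states that $\ext^{\U}_{A,B} \co \Id_{\U}(A,B) \to \Eq(A,B)$ is an equivalence, which is equivalent to the assertion that the total space $\sm{B \co \U}\Eq(A,B)$ is contractible (its centre and contraction being transported across univalence from those of the based path space $\sm{B \co \U}\Id_{\U}(A,B)$). Chaining the equivalences above, $\sm{B \co \Bip}\BipEquiv(A,B)$ is therefore contractible, and the theorem follows. The main obstacle is this contractibility computation: one must rearrange the nested $\Sigma$-type carefully so that univalence enters exactly through the factor $\sm{B \co \U}\Eq(A,B)$, and one must apply Proposition~\ref{thm:usemere} fiberwise in $f$ so that the $\isbipequiv$ factor can be exchanged for $\isequiv(f)$ without disturbing the surrounding structure; once $\isequiv(f)$ no longer depends on $\bar f_0, \bar f_1$, everything else is routine manipulation of $\Sigma$-types and contractibility of based path spaces.
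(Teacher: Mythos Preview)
Your proof is correct and takes a genuinely different route from the paper's. The paper proceeds by building an explicit chain of equivalences from $\Id_{\Bip}(A,B)$ to $\BipEquiv(A,B)$: it first applies the characterisation of identity types of $\Sigma$-types and product types to rewrite $\Id_{\Bip}(A,B)$ as $(\Sigma p \co \Id_\U(A,B))\,\Id((\ext\,p)a_0,b_0)\times\Id((\ext\,p)a_1,b_1)$, then invokes univalence to replace $p$ by an equivalence $f$, rearranges, and finally applies Proposition~\ref{thm:usemere} to pass from $\isequiv(f)$ to $\isbipequiv(f,\bar f_0,\bar f_1)$. It must then close with the verification that this composite equivalence agrees, up to homotopy, with the canonical map $\ext^{\Bip}_{A,B}$. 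Your total-space argument sidesteps exactly this last step: because $\ext^{\Bip}_{A,B}$ is defined by $\Id$-elimination, the only thing you need to know about it is its value on $\refl$, and the reduction to contractibility of $\sm{B\co\Bip}\BipEquiv(A,B)$ absorbs the ``canonical map'' bookkeeping automatically. The ingredients are otherwise the same (Proposition~\ref{thm:usemere}, univalence, contractibility of based path spaces), but you apply them to the total space rather than fibrewise. The paper's approach has the mild advantage of displaying the equivalence concretely at a fixed $B$; yours is cleaner precisely because it avoids the closing ``it is not hard to see'' that the chain coincides with $\ext^{\Bip}_{A,B}$.
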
 

\begin{proof} 
Let $ (A,a_0,a_1), (B,b_0,b_1)$ be small bipointed types. By the characterization of the identity types
of $\Sigma$-types, the 
identity type $\Id\big( (A,a_0,a_1),  (B,b_0,b_1)\big)$ is equivalent to  the type
\[
(\Sigma p \co \Id_\U(A,B))  \, \Id(( a_0,a_1 ),  p^* ( b_0,b_1))  \, .
\]
By $\Id$-elimination and the characterization of paths in product types, this type is equivalent to
\[ 
(\Sigma p \co \Id_\U(A,B)) \, \Id \big( (\ext \, p)(a_0),  b_0\big) \times \Id \big( (\ext \, p)(a_1) , b_1) \, ,
 \]
where $\ext \, p \co A \to B$ is the equivalence of types associated to $p \co \Id_\U(A,B)$.  By the univalence axiom,
the above type is equivalent to
\[ 
(\Sigma f \co \Eq(A,B)) \, \Id \big( f a_0 ,  b_0\big) \times \Id \big( f a_1 , b_1\big)  \, .
\]
After rearranging, we get
\[
(\Sigma f  \co A \to B)(\Sigma \bar{f}_0 \co \Id( fa_0, b_0)) (\Sigma \bar{f}_1 \co \Id( fa_1, b_1))  \, \isequiv(f) \, ,
\]
which is equivalent to $\BipEquiv(A,B)$ by Proposition~\ref{thm:usemere}. Finally, it is not hard to see that the composition of the above equivalences yields the  function $\ext^{\Bip}_{A,B}$ up to a homotopy, thus showing that it is an equivalence, as required.
\end{proof}

\begin{corollary} 
Assuming the univalence axiom, 
homotopy-initial small bipointed types are unique up to a contractible type of paths, i.e. the type
\[ 
(\Pi A \co \Bip) (\Pi B \co \Bip)
\big( \isbiphinit(A) \times \isbiphinit(B) \to \iscontr(\Id_\Bip(A,B)) \big) \, .
\] 
is inhabited.
\end{corollary}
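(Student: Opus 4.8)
The plan is to deduce this corollary directly from the two results that immediately precede it, namely Proposition~\ref{BoolHInitIso} and Theorem~\ref{thm:bipunivalence}. Proposition~\ref{BoolHInitIso} already tells us that homotopy-initial small bipointed types are unique up to a contractible type of \emph{bipointed equivalences}: under the hypothesis $\isbiphinit(A) \times \isbiphinit(B)$, the type $\BipEquiv(A,B)$ is contractible. Theorem~\ref{thm:bipunivalence}, invoking the univalence axiom, supplies an equivalence $\ext^{\Bip}_{A,B} \co \Id_{\Bip}(A,B) \to \BipEquiv(A,B)$. The whole content of the corollary is therefore to transport contractibility of $\BipEquiv(A,B)$ back along this equivalence to contractibility of $\Id_{\Bip}(A,B)$.

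First I would fix $A, B \co \Bip$ and assume witnesses of $\isbiphinit(A)$ and $\isbiphinit(B)$. From Proposition~\ref{BoolHInitIso} I obtain a proof that $\iscontr(\BipEquiv(A,B))$. Next, I would invoke Theorem~\ref{thm:bipunivalence} to get that $\ext^{\Bip}_{A,B}$ is an equivalence of types. The one genuinely reusable fact I need is the standard lemma that contractibility is preserved along equivalences: if $e \co X \to Y$ is an equivalence and $Y$ is contractible, then $X$ is contractible (indeed $\iscontr$ is invariant under equivalence in either direction). Applying this with $X \defeq \Id_{\Bip}(A,B)$, $Y \defeq \BipEquiv(A,B)$, and $e \defeq \ext^{\Bip}_{A,B}$ yields $\iscontr(\Id_{\Bip}(A,B))$, which is exactly the conclusion after discharging the assumptions and the two $\Pi$-abstractions over $A$ and $B$.

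Since everything is assembled from previously established results, there is no serious obstacle; the only point requiring a word of care is the preservation of contractibility along an equivalence. If one wants to avoid appealing to it as a black box, one can argue directly: an equivalence has contractible homotopy fibers, and the homotopy fiber of $\ext^{\Bip}_{A,B}$ over the center of contraction of $\BipEquiv(A,B)$ is equivalent to $\Id_{\Bip}(A,B)$ whenever the codomain is contractible (because the total space of a fibration over a contractible base is equivalent to any of its fibers). Either route is routine in $\Hint$ together with univalence, and the proof is correspondingly short: it is the composite observation that univalence upgrades the contractible type of bipointed equivalences of Proposition~\ref{BoolHInitIso} into a contractible type of \emph{identities}, via the equivalence of Theorem~\ref{thm:bipunivalence}.
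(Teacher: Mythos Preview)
Your proposal is correct and follows essentially the same approach as the paper, which simply states that the result is an immediate consequence of Proposition~\ref{BoolHInitIso} and Theorem~\ref{thm:bipunivalence}. You have merely spelled out the implicit step that contractibility transfers along the equivalence $\ext^{\Bip}_{A,B}$.
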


\begin{proof} This is an immediate consequence of Proposition~\ref{BoolHInitIso} and 
Theorem~~\ref{thm:bipunivalence}. 
\end{proof}

\section{Polynomial functors and their algebras}
\label{section:wfiles}

\subsection*{Algebras and algebra morphisms}
The main aim of this paper is to carry out an analysis for well-ordering types (introduced in \cite{MartinLofP:intttp}), or W-types, analogous to the one we have just done for the type $\Bool$. We recall the rules for W-types in Table~\ref{tab:wrules}. There, we sometimes write $W$ for~$(\W x \co A) B(x)$ for
brevity.  Informally, a W-type can be seen as the free algebra for a signature
with arbitrarily many operations of possibly infinite arity, but no equations. The premises of the formation rule can be thought of as specifying a signature that has the elements of the type $A$ as~(names of) operations and in which the arity of~$a \co A$ is (the cardinality of) the type~$B(a)$. Then the introduction rule specifies the canonical way of forming an element of the free algebra, and the elimination rule can be seen as the propositions-as-types translation of the appropriate induction principle. As usual, the computation rule states what happens if we apply the
the elimination rule to a canonical element of the inductive type. Finally, we have a rule expressing the
closure of the type universe $\U$ under the formation of $W$-types.

\begin{table}[htb]
\fbox{\begin{minipage}{14.5cm}
\[
\begin{prooftree}
 A \co \type \qquad
 x \co A \vdash B(x) \co \type
 \justifies
 \textstyle
 (\W x \co A) B(x) \co \type
\end{prooftree} \qquad 
\begin{prooftree}
a \co A \qquad
t \co B(a) \to W
\justifies
\wsup(a,t)\co W
\end{prooftree}
\]
\bigskip
\[
\begin{prooftree}
w \co W \vdash E(w) \co \type  \quad
x \co A,\, u \co B(x) \to W,\, v \co (\Pi y \co B(x)) E(uy) \vdash e(x,u,v) \co E(\wsup(x,u))
\justifies
 w \co W \vdash \wind(w,e) \co E(w)
 \end{prooftree} \]
 \bigskip
 \[
\begin{prooftree}
w \co W \vdash E(w) \co \type  \quad
x\co A,\, u \co B(x) \to W,\, v \co (\Pi y \co B(x)) E(u y)  \vdash e(x,u,v) \co E(\wsup(x,u))
\justifies
x\co A,\, u \co B(x) \to W \vdash  \wind(\wsup(x,u),e) = e(x,u, ( \lambda y \co B(x)) \,\wind(u y,e)) \co E(\wsup(x,u))
\end{prooftree} \bigskip
\]
\[
\begin{prooftree}
 A \co \U \qquad
 x \co A \vdash B(x) \co \U
 \justifies
 \textstyle
 (\W x \co A) B(x) \co \U
\end{prooftree} \medskip
\]
\end{minipage}} \smallskip
\caption{Rules for $W$-types.} 
\label{tab:wrules}
\end{table}

We now consider a small type $A \co \U$ and a small dependent type $B \co A \to \U$, which we consider
fixed for this section and the next. For $C \co \U$,
we define
\[
PC \defeq (\Sigma x \co A) (B(x) \to C) \, .
\]
In this way, we obtain a function $P \co \U \to \U$. This operation on types extends to an operation on functions, 
as follows. For $f \co C \to D$, we define $P f  \co PC \to PD$ by $\Sigma$-elimination
so that, for~$x \co A$ and $u \co B(x) \to C$, we have 
\[
(Pf )((x, u)) = (x, f u) \, .
\] 
This assignment is pseudo-functorial in the sense that we have propositional, rather than judgemental, equalities:
\begin{equation}
\label{equ:pseudofunP}
\phi_{f, g} \co \Id( P(g \circ f), Pg \circ Pf) \, , \quad \phi_A \co \Id( P(1_A), 1_{PA})
\end{equation}
for $f \co C \to D$, $g \co D \to E$. We still refer to $P$ as the \emph{polynomial functor} associated to $A \co \U$ 
and $B \co A \to \U$, so as to highlight the analogy with the theory of polynomial functors on locally cartesian closed categories~\cite{GambinoN:weltdp,MoerdijkI:weltc}.

\begin{definition} A \emph{$P$-algebra} 
\[
(C, \sup_C)
\] 
is a small type $C \co \U$ equipped with a function~$\sup_C~\co~PC~\to~C$. 
\end{definition}

\smallskip

 The type of $P$-algebras is then defined as 
 \[
 \Palg  \defeq (\Sigma C \co \U) (PC \to C) \, .
 \]
 Given a $P$-algebra $C = (C, \sup_C)$,
 we refer to the type $C$ as the \emph{carrier} or \emph{underlying type} of the algebra and to the function $\sup_C \co PC\to C$ as the \emph{structure map} of the $P$-algebra. 
  In the
 presence of W-types, an example of $P$-algebra is given by the type $W \defeq (\W x\co A)B(x)$, with structure map given by the introduction rule for W-types. 
 
 \medskip
 
Let us now fix $P$-algebras $C = (C, \sup_C)$ and $D = (D, \sup_D)$. 

\begin{definition} A \emph{$P$-algebra morphism} $(f, \bar{f}) \co C \to D$ 
is a function $f \co C \rightarrow D$ equipped with a path $\bar{f} \co \Id( f \circ \sup_C \, , \sup_{D} \circ P f)$.
\end{definition}

Note that the homotopy associated to a path $\bar{f}$ as above has components
\[
(\ext \bar{f})_{x,u} \co \Id \big( f(\sup_C(x,u)), \sup_D(x, fu) \big) \, .
\]
A $P$-algebra morphism as above can be represented with a diagram of the form
\begin{equation*}
\vcenter{\hbox{\xymatrix@C=1.5cm{
 PC \ar[r]^{\sup_C} \ar[d]_{Pf}  \ar@{}[dr]|{\Downarrow \, \bar{f}} &  C \ar[d]^{f}\\
PD \ar[r]_{\sup_D}   & \; D \, .}}}
\end{equation*}
We use this slighly unconventional orientation of the diagram in order to stress the analogy with bipointed morphisms
({cf.\;}the diagram in~\eqref{equ:bipmor}).
Informally, one can think of the path $\bar{f}$ as a proof that the diagram commutes (which is the 
requirement defining the notion of morphism of endofunctor algebras in category theory) or as
an invertible 2-cell (as in the notion of a pseudo-morphism between algebras in 2-dimensional category 
theory~\cite{BlackwellR:twodmt}).  
For later use,  let us introduce some auxiliary notation. For $P$-algebras $C = (C, \sup_C)$, 
$D = (D, \sup_D)$ and a function $f \co C \to D$ between their underlying types, let us define
\begin{equation}
\label{equ:isalghom}
\mathsf{isalghom}(f) \defeq \Id( f \circ \sup_C \,  \sup_D \circ Pf) \, .
\end{equation}
Note that this type is not, in general, a mere proposition. 
Informally, $\mathsf{isalghom}(f)$ is the type of paths $\bar{f}$ witnessing that $f$ is a $P$-algebra morphism,
fitting in a diagram as above. Accordingly, the type of $P$-algebra 
morphisms between $C$ and $D$ is defined by
\[
\Palg(C,D)
 \defeq  
(\Sigma f:  C \rightarrow D) \, \mathsf{isalghom}(f) \, .
\]
We now define the composition operation for $P$-algebra morphisms. Given  $(f, \bar{f}) \co C \to D$ and~$(g, \bar{g}) \co D \to E$,
their composite $(gf, \overline{gf}) \co (C, \sup_C) \to (E, \sup_E)$
is obtained as follows. Its underlying function is given by $gf\co C \to E$, and so the 
the required path must be of the form
\[
 \overline{(g  f)} \co \Id\big( (g \circ f) \circ \sup_C   \, ,  \sup_E \circ P(g \circ f)  \big)\, .
\]
Such a path is obtained by pasting the diagrams 
\[
\xymatrix@C=1.5cm{
 PC \ar[r]^{\sup_C} \ar[d]_{Pf}  \ar@{}[dr]|{\Downarrow \, \bar{f}} &  C \ar[d]^{f}\\
PD \ar[r]_{\sup_D}  \ar[d]_{Pg}   \ar@{}[dr]|{\Downarrow \, \bar{g}} & D \ar[d]^g  \\
PE \ar[r]_{\sup_E} & \; E  \, .}
\] 
More precisely, it is given by the following composition of paths:
\[
\xymatrix@C=1.7cm{
g \circ f \circ \sup_C \ar[r]^{g \circ \bar{f}} & 
g \circ \sup_D \circ Pf \ar[r]^{\bar{g} \circ Pf} & 
\sup_E \circ Pg \circ Pf \ar[r]^{\sup_E \circ \phi_{f,g}^{-1}} &
\sup_E \circ P(g \circ f) \, ,}
\]
where we used the pseudo-functoriality of $P$ in~\eqref{equ:pseudofunP}. 
For a $P$-algebra $C$,  the identity function $1_C \co C \to C$ has an evident structure of $P$-algebra morphism,
represented in the diagram
\begin{equation}
\label{equ:palgid}
\xymatrix@C=1.5cm{
PC \ar[d]_{P(1_C)}  \ar[r]^{\sup_C} \ar@{}[dr]|{\Downarrow \, \overline{1}_C} & C \ar[d]^{1_C} \\
PC \ar[r]_{\sup_C} & \; C \, .}
\end{equation}
As in the case of bipointed types, the associativity and unit laws for a category do not hold up to judgemental equality, but only so up to a system of higher and higher paths.

\medskip

We will require an alternative description of the identity type between two $P$-algebra morphisms. For this, we introduce  the notion of a $P$-algebra homotopy in the next definition.

\medskip

Let us fix $P$-algebra morphisms $f = (f, \bar{f})$ and $g =  (g, \bar{g})$ from $C$ to $D$.

\begin{definition} A \emph{$P$-algebra homotopy}  $(\alpha, \bar{\alpha}) \co f \to g$
is a homotopy $\alpha \co\Hot( f , g)$ equipped with a homotopy
$\bar{\alpha} \co 
\Hot \big(  ( \sup_D \circ P \alpha ) \ct  (\ext \, \bar{f})  \, , \;  (\ext \, \bar{g})  \ct (\alpha \circ \sup_C)\big)$.
\end{definition}

Note that in the definition $\alpha \circ \sup_C$ and $\sup_D \circ P \alpha$ are obtained by pre-composition and post-compositions, respectively, of functions with homotopies. The homotopy $\bar{\alpha}$ can be thought of as a proof that the two homotopies produced by the pasting diagrams
\[
\xymatrix@C=2cm@R=1.5cm{
PC \ar[r]^{\sup_C} \ar@/_1.3pc/[d]_{Pg}  \ar@/^1.3pc/[d]^{Pf}  \ar@{}[dr]|{\qquad \Downarrow \, \bar{f}}
\ar@{}[d]|{\Downarrow \, P\alpha}
  & C \ar@/^1.3pc/[d]^{f} \\ 
PD \ar[r]_{\sup_D} & D} \qquad
\xymatrix@C=2cm@R=1.5cm{
PC \ar[r]^{\sup_C} \ar@/_1.3pc/[d]_{Pg}  \ar@{}[dr]|{\Downarrow \, \bar{g} \qquad} & C \ar@/^1.3pc/[d]^{f} \ar@/_1.3pc/[d]_{g} \ar@{}[d]|{\Downarrow \, \alpha} \\ 
PD \ar[r]_{\sup_D} & D} 
\]
are equal, which is analogous to the condition defining an algebra 2-cell in 2-dimensional category 
theory~\cite{BlackwellR:twodmt}.   Explicitly, the
component of~$\bar{\alpha}$ associated to $x \co A$ and $u \co B(x) \to C$ fit into diagrams of the form
\[
\xymatrix@C=1.8cm{
f (\sup_C(x,u)) 
\ar[r]^{(\ext \bar{f})_{x,u}}
\ar[d]_{\alpha_{\sup_C(x,u)}} 
\ar@{}[dr]|{\Downarrow \, \bar{\alpha}_{x,u}}  & 
 \sup_D(x, fu) \ar[d]^{\sup_D(x, \int( \alpha_u))} \\
g (\sup_D(x,u)) \ar[r]_{(\ext \bar{g})_{x,u}} &\;  \sup_D(x, gu) \, ,}
\]
where $\int(\alpha_u)$ denotes the path associated to the homotopy 
$(\lambda y \co B(x)) \alpha_{uy}$ between $fu$ and $gu$.
The type of $P$-algebra homotopies is then defined by
\[
\AlgHot \big( (f,\bar{f}), (g, \bar{g})  \big)
 \defeq  
(\Sigma \alpha \co \Hot(  f , g)) \, \Id\big( 
( \sup_D \circ P \alpha ) \ct  (\ext \, \bar{f}) \, , \;  ( (\ext \, \bar{g}) \circ \sup_C) \cdot (\alpha \circ \sup_C)
 \big) \, .
\]

\newcommand{\HotAlg}{\mathsf{HotAlg}}

\begin{lemma}\label{IdEqHo}
For every pair of $P$-algebra morphisms $(f, \bar{f}) \, , (g, \bar{g}) \co C \to D$,  
the canonical function
\[
\ext^{\Palg}_{f,g}  \co 
\Id\big((f, \bar{f}), (g, \bar{g})\big) \to \HotAlg \big((f, \bar{f}), (g, \bar{g})\big).
\]
 is an equivalence of types. 
\end{lemma}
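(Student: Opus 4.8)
The plan is to establish the equivalence by the same strategy used for Lemma~\ref{BoolHomSpace}: build a chain of equivalences transforming $\Id((f,\bar f),(g,\bar g))$ into the defining $\Sigma$-type of $\HotAlg((f,\bar f),(g,\bar g))$, and then verify that the resulting composite is homotopic to the canonical map $\ext^{\Palg}_{f,g}$.

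First I would recall that a $P$-algebra morphism is an element of $\Palg(C,D) = (\Sigma f \co C \to D)\,\mathsf{isalghom}(f)$, with $\mathsf{isalghom}(f) = \Id(f \circ \sup_C, \sup_D \circ Pf)$. Applying the characterization of identity types of $\Sigma$-types from Section~\ref{sec:chait}, the type $\Id((f,\bar f),(g,\bar g))$ is equivalent to
\[
(\Sigma p \co \Id(f,g))\, \Id\big( p_{!}(\bar f),\, \bar g \big),
\]
where $p$ ranges over paths between the underlying functions and the second factor lives in $\mathsf{isalghom}(g)$.

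The heart of the argument is the computation of the transport $p_{!}(\bar f)$. Here the relevant fibration over $C \to D$ is $h \mapsto \Id(h \circ \sup_C, \sup_D \circ Ph)$, whose two endpoints are the images of $h$ under pre-composition by $\sup_C$ and under $\sup_D \circ P(-)$. Writing $\alpha \defeq \ext\, p \co \Hot(f,g)$, the standard formula for transport in such a path fibration expresses $p_{!}(\bar f)$ in terms of the homotopies obtained by acting with these two operations on $p$; by the coherence between $\ext$ and whiskering recalled at the end of Section~\ref{sec:bac} (together with the action of $P$ on homotopies and its pseudofunctoriality~\eqref{equ:pseudofunP}), these are precisely $\alpha \circ \sup_C$ and $\sup_D \circ P\alpha$. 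Using these identifications I would rewrite the transport condition $\Id(p_{!}(\bar f),\bar g)$ as the homotopy-level equation
\[
\Id\big( (\sup_D \circ P\alpha) \ct (\ext\,\bar f),\; (\ext\,\bar g) \ct (\alpha \circ \sup_C) \big),
\]
which is exactly the second component appearing in the definition of $\HotAlg$.

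Finally I would apply the equivalence $\Id(f,g) \simeq \Hot(f,g)$ furnished by function extensionality in $\Hint$ to replace the bound variable $p$ by the homotopy $\alpha$, turning the $\Sigma$-type into $\HotAlg((f,\bar f),(g,\bar g))$ and completing the chain. As in Lemma~\ref{BoolHomSpace}, one then checks by $\Id$-elimination (reducing to $p = \refl$) that the composite equivalence agrees up to homotopy with the canonical function $\ext^{\Palg}_{f,g}$. I expect the transport computation of the middle step to be the main obstacle, since it is where the extra structure of the polynomial functor---its action on paths and the pseudofunctoriality witnesses---genuinely enters, whereas the outer steps are formally identical to the bipointed case.
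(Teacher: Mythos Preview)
Your proposal is correct and the chain of equivalences you outline is exactly the argument the paper carries out. The only organizational difference is that the paper does not prove Lemma~\ref{IdEqHo} directly: its proof consists of the single sentence that the statement is a special case of Lemma~\ref{lem:fibhomeqid} (the analogous result for $P$-algebra \emph{sections} of a fibered $P$-algebra), which is proved later by precisely the steps you describe---characterize paths in the $\Sigma$-type, compute the transport $p_{!}(\bar f)$ by $\Id$-elimination, and then pass from paths $p$ to homotopies $\alpha$ via function extensionality. Your direct argument is the specialization of that proof to the constant fibration, so the two approaches coincide mathematically; the paper's route simply avoids repeating the calculation by doing it once in the fibered setting.
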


\begin{proof}
This follows from a more general statement to be proved in Lemma~\ref{lem:fibhomeqid} below.
\end{proof}
This is another case of the identity type encoding higher-categorical structure; we note that the proof of Lemma \ref{IdEqHo}
does not require the univalence axiom.

\subsection*{Fibered algebras and algebra sections} We now introduce the fibered versions of the notions of a $P$-algebra, $P$-algebra morphism, and $P$-algebra homotopy. Some preliminary remarks will help us to motivate our definitions. Let us consider a fixed $P$-algebra   $C = (C, \sup_C)$.
Given a dependent type $E \co C \to \U$,
we wish to describe what data determines a $P$-algebra structure on the type $E' \defeq (\Sigma z \co C) E(z)$. First of all, using a special case of the $\Pi\Sigma$-distributivity law recalled in~\eqref{equ:ac}, we have 
\[
PE' \simeq (\Sigma x \co A) (\Sigma u \co B(x) \to C) (\Pi y \co B(x)) E (uy) \, .
\]
Therefore, we obtain
\begin{align*} 
PE' \to E' \ & \simeq 
 (\Sigma x \co A) (\Sigma u \co B(x) \to C) (\Pi y \co B(x)) E (uy)  \to (\Sigma z \co C) E(z) \\ 
 & \simeq (\Pi x \co A)(\Pi u \co B(x) \to C) (\Pi v \co (\Pi y \co B(x)) E(uy)) (\Sigma z \co C) E(z) \, , 
\end{align*}
and so a structure map $\sup_{E'} \co PE' \to E'$ can be viewed equivalently as a function which 
takes arguments $x \co A$, $u \co B(x) \to C$, $v \co (\Pi y \co B(x))E(uy)$ and an element of $E'$.
Thus, if we wish to ensure that the structure map $\sup_{E'} \co PE' \to E'$ is such that the projection function
 $\pi_1 \co E' \to C$ is a $P$-algebra morphism,  \ie that we can find a path fitting in the diagram
 \[
 \xymatrix@C=1.5cm{
 PE' \ar[d]_-{P \pi_1} \ar[r]^{\sup_{E'}} \ar@{}[dr]|{\Downarrow \,  \overline{\pi}_1} & E' \ar[d]^{\pi_1} \\
 PC \ar[r]_{\sup_C} & \; C \, ,}
 \]
it is sufficient to require the existence of a function of the form
\[
e \co (\Pi x \co A)(\Pi u \co B(x) \to C) (\Pi v \co (\Pi y \co B(x)) E(uy))  E(\sup_C(x,u)) \, . 
 \]  
Note that such a function appears also in one of the premisses of the elimination rule for $W$-types in Table~\ref{tab:wrules}.  We are therefore led to make the following definition. 

\begin{definition} \label{def:fibalg}
A \emph{fibered $P$-algebra} over $C$ consists of a dependent type $E \co C \to \U$
and a function  $e \co  (\Pi x \co A) (\Pi u \co B(x) \to C)   ((\Pi y \co B(x))   E(u y))  \,  E(\sup_C(x,u))$.
\end{definition}

We define the type of fibered $P$-algebras 
over $C$ as follows:
\[
\FibPalg(C) \defeq (\Sigma E \co C \to \U) (\Pi x \co A) (\Pi u \co B(x) \to C) 
 ((\Pi y \co B(x)) E(u y))\,  E(\sup_C(x,u))
 \]
  
 Let us consider a fixed fibered $P$-algebra $E = (E, e)$ over $C$. 
 
 \medskip
 
We define the $P$-algebra $E' = (E', \sup_{E'})$, to which we shall refer as the \emph{$P$-algebra associated to $E$}, 
as follows. As before,
we define $E' \defeq (\Sigma z \co C) E(z)$ and  $\sup_{E'} \co PE' \to E'$  by $\Sigma$-elimination 
so that, for $x \co A$ and~$u \co B(x) \to E'$, we have
\[
\sup_{E'}(x,u) = 
\pair \big( 
\sup_C(x, \pi_1 u ) \, , 
e( x, \pi_1 u, \pi_2 u) \big)  \, .
\]
Here, note that $\pi_1 u \co B(x) \to C$ and $\pi_2 u \co (\Pi y \co B(x)) E(\pi_1 u y)$ and so,
by the type of $e$, we have that~$e(x, \pi_1 u, \pi_2 u) \co E( \sup_C(x, \pi_1 u))$, as required.

In analogy with the way we defined fibered $P$-algebras, it is possible to define 
$P$-algebra sections. To state this definition, we need some preliminary notation.
For  $f \co (\Pi z \co C) E(z)$,
we define $e_f \co (\Pi x \co A)(\Pi u \co B(x) \to C) E(\sup_C(x,u))$ by letting
\begin{equation}
\label{equ:ef}
 e_f \defeq (\lambda x \co A)(\lambda u \co B(x) \to C) \, e(x, u, f u) \, .
\end{equation}
Here, note that for $y \co B(x)$, we have $u y \co C$ and hence $f u y \co E(uy)$,
as required.

\begin{definition} \label{def:fibalgsection} 
A \emph{$P$-algebra section} $(f, \bar{f})$ of $E$ is a section~$f \co (\Pi z \co C) E(z)$ equipped
with a path 
$\bar{f} \co  \Id \big( f \,  \sup_C \, , e_f \big)$.
\end{definition} 

Note that the components of the homotopy $\ext \, \bar{f}$ associated to a path $\bar{f}$ as above have the 
form $(\ext \, \bar{f})_{x,u} \co \Id \big( f(\sup_C(x,u)) \, , e(x, u, fu) \big)$.   We define the type of $P$-algebra sections of $E$ by letting
\[ 
\PalgSec(C,E)  \defeq (\Sigma f  \co (\Pi x \co C) E(x)) \; \Id \big( f \, \sup_C \, , e_f  \big) \, .
\]
This terminology is justified by the fact that, given an algebra section $(f, \bar{f})$ of $E$, there is an algebra 
morphism $f' \co C \to E'$, where  $E' = (E', \sup_{E'})$ is  the $P$-algebra associated to $E$. 
Its underlying function is defined by letting $f' \defeq (\lambda z \co C) \, \pair(z, fz)$
and direct calculations show that there is a path fitting in the diagram
\[
\xymatrix@C=2cm{
PC \ar[d]_{Pf'} \ar[r]^{\sup_C} \ar@{}[dr]|{\Downarrow \, \overline{f'}} & C \ar[d]^{f'} \\
PE' \ar[r]_{\sup_{E'} } & \; E' \, .}
\]
This $P$-algebra morphism provides
a section of the $P$-algebra morphism~$\pi_1 \co E' \to C$ in the sense that the composite $P$-algebra
morphism~$\pi_1  f' \co C \to C$ can be shown to be propositionally equal to the identity $P$-algebra
morphism~$1_C \co C \to C$.

\medskip

We will require an analysis of paths between of $P$-algebra sections and thus we introduce, in Definition~\ref{def:W2cellsection} below, the
notion of a homotopy between $P$-algebra sections. In order to state the definition more briefly, let us introduce some
notation. For a fibered $P$-algebra $E = (E,e)$, sections~$f, g \co (\Pi z \co C) E(z)$ and a path $p \co
\Id(f,g)$, we write $e_p \co \Id(e_f, e_g)$ for the evident path defined by $\Id$-elimination, 
where~$e_f$ and~$e_g$ are defined as in~\eqref{equ:ef}. By the characterisation of identity types of
function types,  a homotopy $\alpha \co \Hot(f,g)$ determines also 
a homotopy~$e_\alpha \co \Hot(e_f, e_g)$. For $x \co A$ and $u \co B(x) \to C$, the 
component $(e_\alpha)_{x,u}$ of this homotopy is given by $e(x, u, \int(\alpha_u))$, where
$\int(\alpha_u)$ is the path associated to the homotopy $(\lambda y \co B(x)) \alpha_{uy}$.

\medskip

Let us now fix two $P$-algebra sections  of $E$,  $f = (f, \bar{f})$ and $g = (g, \bar{g})$. 

\begin{definition} \label{def:W2cellsection}  A \emph{$P$-algebra section homotopy} 
$(\alpha , \bar{\alpha}) \co f  \sim g$  
is a homotopy~$\alpha \co \Hot(f, g)$ equipped with a homotopy
$\bar{\alpha} \co 
(\Pi x \co A) (\Pi u \co B(x) \to C) \;
\Hot\big(  e_\alpha \ct \ext( \bar{f}) \, , 
 \ext( \bar{g}) \ct (\alpha \circ \sup_C)   \big)$.
\end{definition} 

The components of the homotopy $\bar{\alpha}$ that is part of a $P$-algebra section homotopy as above  can be
represented diagrammatically as fitting in the following diagram
\[
\xymatrix@C=2.5cm{
f ( \sup_C(x,u)) \ar[d]_{\alpha_{\sup_C(x,u)}} \ar[r]^{(\ext \, \bar{f})_{x,u}}  
\ar@{}[dr]|{\Downarrow \, \bar{\alpha}_{x,u}}  & e(x,u, fu)  \ar[d]^{e(x,u, \int(\alpha_u))} \\ 
g(\sup_C(x,u)) ) \ar[r]_{(\ext \, \bar{g})_{x,u}} & \;  e(x, u, gu) \, .}
\]
Accordingly, we define the type of $P$-algebra homotopies of sections as follows:
\begin{multline*} 
\AlgSecHot( (f, \bar{f}) ,\, (g, \bar{g}) )  \defeq  \\ 
(\Sigma \alpha \co \Hot( f , g)) 
(\Pi x \co A) 
(\Pi u \co B(x) \to C) \, 
\Hot\big(  e_\alpha \ct \ext( \bar{f}) \, , 
 \ext( \bar{g}) \ct (\alpha \circ \sup_C)   \big) \, .
\end{multline*}
Remarkably, in spite of the complexity of its definition, the notion of a $P$-algebra homotopy is equivalent to
that of an identity proof between $P$-algebra sections, as the next lemma makes precise.

\begin{lemma}\label{lem:fibhomeqid} The canonical function
\[
\ext^\PalgSec_{(f, \bar{f}), (g, \bar{g})} \co \Id \big( (f, \bar{f}) ,\, (g, \bar{g}) \big) \, \to
\AlgSecHot\big( (f, \bar{f}) ,\, (g, \bar{g}) \big) 
\]
is an equivalence of types.
\end{lemma}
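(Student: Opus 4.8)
The plan is to prove the statement by the same chain-of-equivalences strategy used for Lemma~\ref{BoolHomSpace}, carried out for the richer structure of $P$-algebra sections. Since the source type is the identity type of the $\Sigma$-type $\PalgSec(C,E) = (\Sigma f \co (\Pi z \co C)E(z))\,\Id(f\sup_C, e_f)$, the first step is to invoke the characterization of identity types of $\Sigma$-types from Section~\ref{sec:chait} to obtain
\[
\Id\big((f,\bar{f}),(g,\bar{g})\big) \simeq (\Sigma p \co \Id(f,g))\,\Id\big(\bar{f},\, p^{*}\bar{g}\big),
\]
reducing everything to understanding the transport $p^{*}\bar{g}$ in the family $h \mapsto \Id(h\sup_C, e_h)$, whose two endpoints $h \mapsto h\sup_C$ and $h \mapsto e_h$ both depend on $h$.

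The second step is to compute this transport. Applying $\Id$-elimination to $p$, one finds that $p^{*}\bar{g}$ is $\bar{g}$ concatenated with the two paths induced by $p$ on the endpoints: writing $p\sup_C \co \Id(f\sup_C, g\sup_C)$ for the path induced on $h \mapsto h\sup_C$ and $e_p \co \Id(e_f, e_g)$ for the path induced on $h \mapsto e_h$ (the latter already named in the paragraph preceding Definition~\ref{def:W2cellsection}), the transport works out to $p^{*}\bar{g} = e_p^{-1} \ct \bar{g} \ct (p\sup_C)$. Rearranging with the groupoid laws, the fiber condition $\Id(\bar{f}, p^{*}\bar{g})$ becomes an identity of the shape
\[
\Id\big(e_p \ct \bar{f},\; \bar{g} \ct (p\sup_C)\big),
\]
which already has the square/2-cell form of the coherence datum in $\AlgSecHot$.

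The third step is to pass from paths to homotopies. Applying the function-extensionality equivalence $\Id(f,g) \simeq \Hot(f,g)$ to the outer $\Sigma$ replaces $p$ by $\alpha \defeq \ext(p)$, and under $\ext$ the induced paths translate as $p\sup_C \rightsquigarrow \alpha \circ \sup_C$ and $e_p \rightsquigarrow e_\alpha$; these are precisely the naturality properties of $\ext$ with respect to precomposition by $\sup_C$ (recorded at the end of Section~\ref{sec:bac}) and with respect to the operation $f \mapsto e_f$ of \eqref{equ:ef}. Since $\ext$ is functorial for concatenation, $e_p \ct \bar{f}$ and $\bar{g} \ct (p\sup_C)$ map to $e_\alpha \ct \ext(\bar{f})$ and $\ext(\bar{g}) \ct (\alpha \circ \sup_C)$; applying the characterization of identity types of $\Pi$-types once more expands the resulting equality of homotopies into its pointwise form over $x \co A$ and $u \co B(x) \to C$, which is exactly the coherence in $\AlgSecHot\big((f,\bar{f}),(g,\bar{g})\big)$. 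Composing all of these equivalences, and checking that the composite agrees with the canonical map $\ext^{\PalgSec}$, gives the lemma; specializing to a constant fibered algebra recovers Lemma~\ref{IdEqHo}.

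The main obstacle is the transport computation together with the two naturality checks: one must verify that transport in the family $h \mapsto \Id(h\sup_C, e_h)$ really produces concatenation with $p\sup_C$ and $e_p$ in the correct order and orientation, and that these induced paths are carried by $\ext$ to $\alpha \circ \sup_C$ and $e_\alpha$ respectively. This is where the definition of $e_f$ in \eqref{equ:ef} and the behaviour of $\ext$ under pre- and post-composition must be used with care; everything else is bookkeeping with the groupoid laws. The $\Bool$-case of Lemma~\ref{BoolHomSpace} is the degenerate instance in which $\sup_C$ collapses to the two constant points and $e_h$ to the constant elements $e_k$, so the present argument is a direct, if heavier, generalization of it.
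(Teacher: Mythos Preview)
Your proposal is correct and follows essentially the same chain-of-equivalences strategy as the paper: characterize the identity type of the $\Sigma$-type $\PalgSec(C,E)$, compute the transport in the family $h \mapsto \Id(h\sup_C, e_h)$ by $\Id$-elimination to obtain the square-shaped condition $\Id(e_p \ct \bar{f}, \bar{g} \ct (p \circ \sup_C))$, and then pass from paths to homotopies via function extensionality. The only cosmetic difference is that you use the backward transport $p^*\bar{g}$ whereas the paper uses the forward transport $p_{!}(\bar{f})$, a variation the paper itself explicitly permits in Section~\ref{sec:chait}.
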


\begin{proof} For $p \co \Id(f, g)$ we have  $p_{!}(\bar{f}) \co  \Id( g \circ \sup_C \, , e_g)$ 
and it can be shown by $\Id$-elimination that there exists a path 
$q  \co \Id\big(  e_p \ct \bar{f}   \, ,  p_{!}(\bar{f})  \ct  (p \circ \sup_C)    \big)$, 
which can be represented with the diagram
\[
\xymatrix@C=2cm{
f \circ \sup_C  \ar[d]_-{p \circ \sup_C}  \ar[r]^{\bar{f}}  \ar@{}[dr]|{\Downarrow \, q } & e_f 
\ar[d]^{ e_p}  \\
 g \circ \sup_C \ar[r]_-{ p_{!}(\bar{f}) }  & \; e_g \, .    }
\]
We then have
\begin{align*}
  \Id\big( (f,\bar{f}),  (g,\bar{g}) \big) 
& \simeq (\Sigma p \co \Id( f , g)) \; \Id ( p_{!} ( \bar{f} ) \, , \bar{g}  )  \\
& \simeq (\Sigma p \co \Id( f, g)) 
\; \Id\big(  
e_p \ct  \bar{f} \ct (p \circ \sup_C)^{-1} \, , \bar{g}     \big)   \\ 
& \simeq (\Sigma p \co \Id( f, g)) 
\; \Id\big( e_p  \ct \bar{f} \, ,   \bar{g} \ct ( p \circ \sup_C)   \big)   \\ 
& \simeq (\Sigma p \co \Id( f, g)) 
\Hot\big(  e_{\ext p} \ct \ext( \bar{f}) \, , 
 \ext( \bar{g}) \ct ( (\ext \, p) \circ \sup_C)   \big)  \\ 
&  \simeq (\Sigma \alpha \co \Hot( f, g)) \; 
\Hot\big(  e_\alpha \ct \ext( \bar{f}) \, , 
 \ext( \bar{g}) \ct (\alpha \circ \sup_C)   \big)  \\
&  =   \AlgSecHot \big( (f,\bar{f}) \; (g,\bar{g}) \big) \, . \qedhere
\end{align*}  
\end{proof}

Note that Lemma~\ref{IdEqHo}, which we left without proof, follows as a special case of Lemma~\ref{lem:fibhomeqid}. 

\subsection*{Algebra equivalences}  We introduce the notion of equivalence between $P$-algebras. This
will useful in Section~\ref{sec:univalencealgebras}, where will prove that assuming the univalence axiom, a form of univalence holds also for $P$-algebras.

\begin{definition}  
We say that a $P$-algebra morphism $f \co C \to D$ is 
 a~\emph{$P$-algebra equivalence}
if there exist $P$-algebra morphisms $g, h \co D \to C$  which provide a left and a right $P$-inverse for $f$ as a
$P$-algebra morphism, \ie for
which there are paths of $P$-algebra morphisms
\[ 
p \co \Id_{\Palg(C,C)}( g  f,  1_C) \, , \quad q \co \Id_{\Palg(D,D)}( f h , 1_D) \, .
\]
\end{definition}
 
Given a $P$-algebra morphism $f \co C \to D$, we define the type of proofs that $f$ is an equivalence of $P$-algebras as follows:
\[
\isalgequiv(f) \defeq  (\Sigma g \co  \Palg(D,C)) \Id_{\Palg(C,C)}( g f, 1_C )  \times 
    (\Sigma h  \co \Palg(D, C)) \Id_{\Palg(D,D)}( f h , 1_D ) \, .
\]
We then define the type of $P$-algebra equivalences between $C$ and $D$ as
\[
\AlgEquiv(C, D)
\defeq   (\Sigma f \co \Palg(C,D)) \, \isalgequiv(f)  \, . 
\]

\begin{lemma} The underlying function of a $P$-algebra equivalence is an equivalence, \ie 
for every $P$-algebra morphism $(f, \bar{f}) \co C \to D$ there is a function 
\[
\pi_f \co \isalgequiv(f, \bar{f})  \to \mathsf{isequiv}(f)  \, .
\]
\end{lemma}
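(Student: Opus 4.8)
The plan is to mirror, step for step, the construction given for bipointed equivalences in the analogous lemma of Section~\ref{sec:bip}. First I would unfold the definition of $\isalgequiv(f, \bar f)$ by writing each $P$-algebra morphism $g \co \Palg(D,C)$ as a pair consisting of a function $g \co D \to C$ and a path $\bar g \co \mathsf{isalghom}(g)$, and likewise for $h$. This exhibits $\isalgequiv(f, \bar f)$ as a $\Sigma$-type whose left-hand factor has the form
\[
(\Sigma g \co D \to C)(\Sigma \bar g \co \mathsf{isalghom}(g)) \, G(g, \bar g) \, ,
\]
where $G(g, \bar g) \defeq \Id_{\Palg(C,C)}\big( (g f, \overline{(gf)}) , (1_C, \overline{1}_C) \big)$, and symmetrically for the right-hand factor with a type $H(h, \bar h)$.

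Next I would apply the characterization of identity types of $\Sigma$-types from Section~\ref{sec:chait} to rewrite $G(g, \bar g)$ as $(\Sigma p \co \Id(g f, 1_C)) \, \Id\big( \overline{(gf)} , p^*(\overline{1}_C) \big)$, and similarly for $H$. Rearranging the order of the $\Sigma$-quantifiers then presents $\isalgequiv(f, \bar f)$ as equivalent to
\[
(\Sigma g \co D \to C)(\Sigma p \co \Id(g f, 1_C)) \, G'(g,p) \times (\Sigma h \co D \to C)(\Sigma q \co \Id(f h, 1_D)) \, H'(h,q) \, ,
\]
where $G'(g,p)$ gathers the remaining structure path $\bar g$ together with the coherence condition $\Id(\overline{(gf)}, p^*(\overline{1}_C))$, and $H'(h,q)$ is defined symmetrically. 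The required function $\pi_f$ is then obtained by composing this equivalence with the projection that forgets the $G'$ and $H'$ components, followed by the inverse of the equivalence in~\eqref{equ:equivalternative}, which identifies $\isequiv(f)$ with the type of functions equipped with a left and a right inverse.

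The point requiring the most care, as compared with the bipointed case, is the explicit description of the composite structure path $\overline{(gf)}$ appearing in $G$: by the definition of composition of $P$-algebra morphisms, this path is built from $\bar f$ and $\bar g$ by pasting, and so it involves the pseudo-functoriality witness $\phi_{f,g}$ from~\eqref{equ:pseudofunP} as well as the identity structure path $\overline{1}_C$ from~\eqref{equ:palgid}. However, since for this lemma we only need to \emph{construct} the function $\pi_f$ and not to analyze its homotopy fibers — that will be carried out in the algebra counterpart of Proposition~\ref{thm:usemere} — these extra witnesses cause no difficulty: they are simply carried along inside $G'(g,p)$ and $H'(h,q)$ and then discarded by the forgetful projection. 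I therefore expect the construction to be entirely routine, the only bookkeeping being the correct unfolding of $\mathsf{isalghom}$ and of the transport $p^*(\overline{1}_C)$.
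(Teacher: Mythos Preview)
Your proposal is correct and follows essentially the same approach as the paper: unfold $\isalgequiv(f,\bar f)$, use the characterization of identity types of $\Sigma$-types to rewrite the conditions $G(g,\bar g)$ and $H(h,\bar h)$, rearrange the $\Sigma$-types to isolate $G'(g,p)$ and $H'(h,q)$, and then project away these components before applying the equivalence~\eqref{equ:equivalternative}. The paper's proof is in fact slightly terser than yours, leaving the pseudo-functoriality witnesses implicit in the composite $\overline{gf}$ at this stage and only unpacking them in the subsequent proposition.
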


\begin{proof} Let $(f, \bar{f}) \co C \to D$ be a $P$-algebra morphism. 
Unfolding the definition, we have
\begin{multline*}
\isalgequiv(f, \bar{f}) \defeq \\	 (\Sigma g \co D \to C) 
	 \big( \Sigma \bar{g} \co \mathsf{isalghom}(g) \big) \, 
 		G(g, \bar{g})  \times
	 (\Sigma h \co D \to C) 
	 \big(\Sigma \bar{h} \co   \mathsf{isalghom}(h) \big)
	 \,  H(h, \bar{h}) \, , 
\end{multline*}
where we used the notation introduced in~\eqref{equ:isalghom} and 
\[
G(g,\bar{g}) 
\defeq 
\Id \big( ( g  f,  \overline{gf}), (1_C, \overline{1}_C) \big)  \, , \quad 
H(h,\bar{h})     \defeq \Id \big( (   f h,  \overline{ f h}), (1_D, \overline{1}_D) \big)  \, .
\]
The types $G(g, \bar{g})$ and $H(h, \bar{h})$ can be thought of as the types of proofs that $(g, \bar{g})$ and $(h, \bar{h})$ are a left and right inverse for $(f, \bar{f})$ as 
$P$-algebra morphisms, respectively. For the right inverse, this amounts to requiring that the pasting diagram
\[
\xymatrix@C=1.5cm{
PC \ar[r]^{\sup_C} \ar[d]_{Pf}  \ar@{}[dr]|{\Downarrow \, \bar{f}} & C \ar[d]^f \\
PD \ar[r]^{\sup_D} \ar[d]_{Pg} \ar@{}[dr]|{\Downarrow \, \bar{g}} & D\ar[d]^g  \\
PC \ar[r]_{\sup_C} & C}
\]
is propositionally equal to the diagram for the identity $P$-algebra morphism on $C$ in~\eqref{equ:palgid}. 
By the characterization of paths in $\Sigma$-types, we have
\[ 
G(g, \bar{g})  \simeq (\Sigma p \co \Id(g f ,  1_C) ) \, 
	\Id\big( \overline{gf}   ,\, p^*( \overline{1}_C)  \big) \, , \quad
H(h, \bar{h})  \simeq (\Sigma q \co \Id( f h ,  1_C) ) \, 
	\Id\big( \overline{fh}   ,\, q^*( \overline{1}_C)  \big) \,. 
\]
Thus, rearranging the $\Sigma$-types in the definition, we have
\begin{multline}
\label{equ:isalgequivequiv}
\isalgequiv(f, \bar{f}) \simeq \\ 
 (\Sigma g \co D \to C)  (\Sigma p \co \Id(g f ,  1_C) ) \, G'(g,p) \times
   (\Sigma h \co D \to C)   (\Sigma q \co \Id( f h ,  1_C) )\,  H'(h,q) \, ,
\end{multline}
where 
\begin{align}
G'(g,p)  & \defeq (\Sigma \bar{g} \co \mathsf{isalghom}(g)) \; \Id( \overline{gf}, p^*(\bar{1}_C)) \, , \label{equ:alggp} \\
H'(h,q)  & \defeq (\Sigma \bar{h} \co \mathsf{isalghom}(h)) \; \Id( \overline{fh}, q^*(\bar{1}_D)) ) \, .  \label{equ:alghq}
\end{align}
The  canonical elements of~$G(g,p)$ are pairs $(\bar{g}, \bar{p})$ consisting of  a path~$\bar{g}$ making
$g$ into a $P$-algebra morphism and a path~$\bar{p}$ making $p \co \Id(gf, 1_C)$ into a propositional
equality between the~$P$-algebra morphisms~$(gf, \overline{gf})$ and~$(1_C, \bar{1}_C)$. It is now
clear that we can obtain the required function $\pi_f$ by composing the equivalence in~\eqref{equ:isalgequivequiv}
with the evident projections and the equivalence in~\eqref{equ:equivalternative}. 
\end{proof} 

Proposition~\ref{WAlgSpace} below can be understood informally as saying that for a $P$-algebra morphism $f$, there is an essentially unique way of turning an inverse of $f$ as a function into an inverse of $f$ as a $P$-algebra morphism.

\begin{proposition}\label{WAlgSpace} A $P$-algebra morphism $(f, \bar{f}) \co C \to D$ is an equivalence of
$P$-algebras if and only
if its underlying function $f \co C \to D$ is an equivalence of types, \ie the function 
\[
\pi_f \co \isalgequiv(f, \bar{f})  \to \mathsf{isequiv}(f)  
\]
is an equivalence. 
\end{proposition}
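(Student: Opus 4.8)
The plan is to follow the template of Proposition~\ref{thm:usemere}, the bipointed analogue, replacing the two point-level coherence conditions there by the single, homotopy-level coherence condition that defines $\mathsf{isalghom}$. As in that proof, I would establish that $\pi_f$ is an equivalence by showing that all of its homotopy fibres are contractible. So fix a canonical element of $\isequiv(f)$; by the alternative description of equivalences in~\eqref{equ:equivalternative} this amounts to functions $g, h \co D \to C$ together with paths $p \co \Id(gf, 1_C)$ and $q \co \Id(fh, 1_D)$. Using the equivalence~\eqref{equ:isalgequivequiv} established in the preceding lemma, together with the standard description of homotopy fibres of a projection, the fibre of $\pi_f$ over $(g, h, p, q)$ is equivalent to the product $G'(g,p) \times H'(h,q)$, with $G'$ and $H'$ as in~\eqref{equ:alggp} and~\eqref{equ:alghq}. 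Since a product of contractible types is contractible, it suffices to prove that each factor is contractible, and by symmetry I would treat only $G'(g,p)$.

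Recall that $G'(g,p)$ packages a choice of algebra-morphism structure $\bar{g} \co \mathsf{isalghom}(g)$ on $g$ together with a proof that the resulting composite coherence $\overline{gf}$ agrees with the transported identity coherence $p^*(\bar{1}_C)$. The first step is to rewrite this transport in more tractable terms: exactly as in Proposition~\ref{thm:usemere}, an $\Id$-elimination on $p$ identifies $p^*(\bar{1}_C)$ with the coherence datum built from the homotopy $\ext\,p \co \Hot(gf, 1_C)$. Unfolding the definition of $\overline{gf}$ as the pasting of $\bar{f}$ and $\bar{g}$, and passing from paths to homotopies via Lemma~\ref{IdEqHo} and function extensionality, then presents the coherence condition in $G'(g,p)$ as asking for a homotopy $\bar{g}$ whose whiskered pasting with the fixed $\ext\,\bar{f}$ equals a prescribed homotopy.

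Because $\mathsf{isalghom}(g)$ and the remaining coherence are themselves built from $\Pi$-types indexed by $x \co A$ and $u \co B(x) \to D$ (using $\Pi\Sigma$-distributivity~\eqref{equ:ac} together with function extensionality to commute the $\Sigma$ past these products), the contractibility of $G'(g,p)$ reduces to the pointwise contractibility, for each such $x$ and $u$, of a type of precisely the shape appearing in part~(ii) of Lemma~\ref{thm:useful}: a choice of a path together with a filler expressing that its composite along the equivalence $f$ equals a given path. That lemma yields contractibility of each fibrewise type, and a $\Pi$ of contractible types is again contractible, whence $G'(g,p)$ is contractible, as required.

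The main obstacle is bookkeeping rather than conceptual: one must carefully track how the transport $p^*(\bar{1}_C)$ and the pasted composite $\overline{gf}$ unfold into homotopies indexed over $x$ and $u$, and verify that after this unfolding the residual data is exactly an instance of Lemma~\ref{thm:useful}(ii), with the equivalence taken to be $f$. Once the transport has been normalised and the $\Sigma$/$\Pi$ reshuffling carried out via function extensionality, no further homotopical input is needed; in particular, the univalence axiom plays no role here.
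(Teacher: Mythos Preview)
Your overall strategy matches the paper's, but the pointwise reduction you describe for $G'(g,p)$ does not go through as stated. You claim that both $\mathsf{isalghom}(g)$ and the coherence $\Id(\overline{gf}, p^*(\bar{1}_C))$ unfold as $\Pi$-types over $x \co A$ and $u \co B(x) \to D$. This is true of $\mathsf{isalghom}(g)$, but the coherence lives in $\mathsf{isalghom}(gf)$, which under function extensionality is a $\Pi$-type over $u \co B(x) \to C$: at such a $u$ the composite $\overline{gf}$ involves $(\bar{g} \circ Pf)_{x,u} = (\ext\,\bar{g})_{x,\, f u}$. Thus the $\Sigma$ (over the components of $\bar{g}$, indexed by $PD$) and the inner $\Pi$ (encoding the coherence, indexed by $PC$) range over mismatched index types, so $\Pi\Sigma$-distributivity does not directly apply. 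Nor is the residual fibrewise type an instance of Lemma~\ref{thm:useful}(ii) with equivalence $f$: the map $f$ is not applied to the unknown path; rather $\bar{g}$ is \emph{evaluated} at a point in the image of $Pf$.

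The paper closes this gap by staying at the global level. After rewriting $G'(g,p)$ as $(\Sigma \bar{g})\,\Id(\bar{g} \circ Pf, r)$ for a fixed path $r$, it observes that since $f$ is an equivalence so is $Pf \co PC \to PD$, and hence so is the precomposition map sending $\bar{g}$ to $\bar{g} \circ Pf$; Lemma~\ref{thm:useful}(ii) then applies with \emph{this} map as the equivalence. Your argument can be repaired either by inserting a reindexing step along the equivalence $Pf$ before distributing, or, more simply, by treating $H'(h,q)$ instead of $G'(g,p)$: there both pieces are genuinely indexed over $u \co B(x) \to D$, and the unknown $\bar{h}$ enters $\overline{fh}$ through the whisker $f \circ \bar{h}$, so the fibrewise type is exactly an instance of Lemma~\ref{thm:useful}(ii) with equivalence $f$, just as you describe. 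The two factors are symmetric only at the global level; your pointwise approach happens to pick the harder one.
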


\begin{proof} Let $(f, \bar{f}) \co (C, \sup_C) \to (D, \sup_D)$ be a  $P$-algebra morphism. We will 
show that all the homotopy fibers of the function $\pi_f$ are contractible.
So, let us consider a canonical element of the codomain of $\pi_f$,  given by a 4-tuple $( g, h, p, q) \co \mathsf{isequiv}(f)$ consisting of functions $g \co D \to C$ and $h \co D \to C$ and paths $p \co \Id(gf, 1_C)$, $q \co \Id(fh, 1_D)$,
exhibiting~$g$ and~$h$ as a right and a left inverse of $f$ (as a function, not as a $P$-algebra morphism), respectively. 

The homotopy fiber of $\pi_f$ over this element can be thought of as the type consisting of all
the data that is missing from having a left and a right inverse of $f$ as a $P$-algebra morphism. 
In particular, we have
\begin{equation*}
\hfiber(\pi_f, (g, h, p, q ) ) \simeq G'(g,p) \times H'(h,q) \, ,
\end{equation*}
where $G'(g,p)$ and $H'(h,q)$ are defined as in~\eqref{equ:alggp} and~\eqref{equ:alghq}, respectively.
Therefore, it suffices to prove that $G'(g,p)$ and $H'(h,q)$ are  contractible.  The proofs that $G'(g,p)$ and $H'(h,q)$ are contractible are essentially identical, so we consider only $G'(h,p)$.

First of all,  recall that the path $\overline{gf} \co  \mathsf{isalghom}(gf) $ is given by 
$\overline{gf} \defeq (g \circ \bar{f}) \ct (\bar{g} \circ Pf )$, 
where we suppressed the path relative to the pseudo-functoriality of $P$, as in~\eqref{equ:pseudofunP}, for convenience. 
By $\Id$-elimination on $p$, the path $p^*(\overline{1}_C) \co \mathsf{isalghom}(gf)$ is propositionally equal to the composite path
\[
\xymatrix@C=1.7cm{
\sup_C \circ P(gf) \ar[r]^{\sup_C \circ P(p)} & \sup_C \circ P(1_C) \ar[r]^{\bar{1}_C} & 1_C \circ \sup_C \ar[r]^{p^{-1} \circ \sup_C} & 
(g f) \circ 1_C \, .}
\]
Hence, we have 
\begin{align*} 
G(g,p)  & \simeq  (\Sigma \bar{g} \co  \mathsf{AlgHom}(g))  \; 
\Id \big( (g \circ \bar{f}) \ct (\bar{g} \circ Pf ) \, , 
  (p^{-1} \circ \sup_C) \ct \bar{1}_C \ct (\sup_C \circ P(p)) \big) \\
& \simeq 
(\Sigma \bar{g} \co  \mathsf{AlgHom}(g))  \; 
\Id \big( \bar{g} \circ Pf  \, , (g \circ \bar{f})^{-1} \ct 
  (p^{-1} \circ \sup_C) \ct \bar{1}_C \ct (\sup_C \circ P(p)) \big) \, .
\end{align*} 
Now, since $f \co C \to D$ is an equivalence, $Pf \co PC \to PD$ is also an equivalence and hence so
is the function mapping a path $r \co \Id_{PD \to C}(s,t)$ to the composite $r \circ Pf \co \Id_{PC \to C}( s \circ Pf, t \circ Pf)$. Thus, by part~(ii) of Lemma~\ref{thm:useful}, $G'(g,p)$ is contractible, as required.
\end{proof}

\begin{corollary} For every $P$-algebra morphism $(f, \bar{f})$, the type $\isalgequiv(f, \bar{f})$ is
a mere proposition. \qed
\end{corollary}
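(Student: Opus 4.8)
The plan is to invoke Proposition~\ref{WAlgSpace}, which already establishes that the projection $\pi_f \co \isalgequiv(f, \bar{f}) \to \mathsf{isequiv}(f)$ is an equivalence of types. Since being a mere proposition is a property preserved under equivalences, it suffices to show that the codomain $\mathsf{isequiv}(f)$ is a mere proposition; the claim for $\isalgequiv(f, \bar{f})$ then follows at once.

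First I would recall that, by definition, $\mathsf{isequiv}(f) = (\Pi y \co D) \, \iscontr(\hfiber(f,y))$. This exhibits $\mathsf{isequiv}(f)$ as a dependent product of a family of types of the form $\iscontr(X)$. As noted in the proof of Proposition~\ref{thm:isbiphinitishprop}, each type $\iscontr(X)$ is a mere proposition, and the dependent product of a family of mere propositions is again a mere proposition. Hence $\mathsf{isequiv}(f)$ is a mere proposition.

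Next I would appeal to the standard fact that mere propositions are closed under equivalence: if $X \simeq Y$ and $Y$ is a mere proposition, then so is $X$. This holds because an equivalence $e \co X \to Y$ induces equivalences on all identity types, $\Id_X(x, x') \simeq \Id_Y(e x, e x')$, and contractibility is itself preserved under equivalence, so each $\Id_X(x,x')$ is contractible whenever each $\Id_Y(e x, e x')$ is. Applying this with the equivalence $\pi_f$ of Proposition~\ref{WAlgSpace} yields that $\isalgequiv(f, \bar{f})$ is a mere proposition, as required.

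There is no real obstacle here: all the substantial work has been carried out in Proposition~\ref{WAlgSpace}, and the remaining ingredients---that $\iscontr$-types are mere propositions, that dependent products preserve being a mere proposition, and that mere propositions are closed under equivalence---are entirely routine facts of the homotopy-theoretic interpretation reviewed in Section~\ref{sec:bac}. This argument is exactly parallel to the corresponding corollary for bipointed types, which follows in the same way from Proposition~\ref{thm:usemere}.
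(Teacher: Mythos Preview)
Your argument is correct and is precisely the intended one: the paper leaves this corollary without proof because it follows immediately from Proposition~\ref{WAlgSpace} together with the standard facts that $\isequiv(f)$ is a mere proposition and that mere propositions are closed under equivalence. This is exactly parallel to the bipointed case following Proposition~\ref{thm:usemere}, as you note.
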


\section{Homotopy-initial algebras}
\label{sec:homta}

\subsection*{Inductive algebras}
Given a $P$-algebra $C = (C, \sup_C)$ and a type $D$, an equivalence of types $f \co C \to D$ makes $D$ into a $P$-algebra with structure map $\sup_D \co PD \to D$ given by the composite
\[
\xymatrix@C=1.2cm{
PD \ar[r]^{P(f^{-1})} & PC \ar[r]^{\sup_C} & C \ar[r]^f & D \, , }
\]
 where $f^{-1} \co D \to C$ is a quasi-inverse of $f \co C \to D$. In particular, for $W = (\W x \co A) B(x)$, if we have an equivalence  
 $f \co W \to D$, then the induced $P$-algebra structure $\sup_D \co PD \to D$ defined as above is such that $D$ also satisfies 
 a form of the elimination rule for $W$-types.  We shall see that $D$ satisfies the other rules as well, but with a weakened computation rule.

\begin{definition}\label{def:Wind}
We say that a $P$-algebra $C$  is \emph{inductive} if every fibered $P$-algebra
over it has a $P$-algebra section, \ie the type
\[ 
\isalgind(C) \defeq (\Pi E \co \FibPalg(C)) \,  \PalgSec(C,E) 
\]  
is inhabited.
\end{definition}

In complete analogy with the case of bipointed types, for a $P$-algebra $C$, the type $\isalgind(C)$ is a mere proposition. We also have the following analogue of Proposition~\ref{BoolHInitIso}.

\begin{proposition} \label{WHInitIso} 
Homotopy-initial $P$-algebras are unique up to a  contractible type of algebra equivalences, i.e. the type
\[ 
(\Pi C \co \Palg) (\Pi D \co \Palg) \big( \isalghinit(C) \times \isalghinit(D)  \to 
\iscontr(\AlgEquiv(C,D)) \big) \, .
\] 
is inhabited.
\end{proposition}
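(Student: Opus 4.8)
The plan is to mirror closely the proof of Proposition~\ref{BoolHInitIso}, since the notion of a homotopy-initial $P$-algebra and the auxiliary facts about $P$-algebra equivalences run exactly parallel to the bipointed case. Fixing homotopy-initial $P$-algebras $C$ and $D$, I would unfold the goal $\iscontr(\AlgEquiv(C,D))$ using $\AlgEquiv(C,D) = (\Sigma f \co \Palg(C,D)) \isalgequiv(f)$ and argue that this total type is an inhabited mere proposition, hence contractible.

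First I would use the homotopy-initiality of $C$ to conclude that $\Palg(C,D)$ is contractible, and in particular a mere proposition. Recalling that $\isalgequiv(f)$ is a mere proposition for every $P$-algebra morphism $f$ (the corollary to Proposition~\ref{WAlgSpace}), and using the standard fact that the dependent sum of a family of mere propositions over a mere proposition is again a mere proposition, I would conclude that $\AlgEquiv(C,D)$ is a mere proposition. It therefore suffices to exhibit a single element of it.

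To do so, I would take the $P$-algebra morphism $f \co C \to D$ supplied by the contractibility of $\Palg(C,D)$ and show that it is a $P$-algebra equivalence by producing a left and a right inverse. The homotopy-initiality of $D$ makes $\Palg(D,C)$ contractible, yielding morphisms $g, h \co D \to C$. Now $gf$ and the identity $1_C$ both inhabit $\Palg(C,C)$, which is contractible by the homotopy-initiality of $C$ and hence a mere proposition; this provides a path $p \co \Id_{\Palg(C,C)}(gf, 1_C)$. Symmetrically, $fh$ and $1_D$ inhabit the contractible type $\Palg(D,D)$, giving a path $q \co \Id_{\Palg(D,D)}(fh, 1_D)$. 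Together $p$ and $q$ exhibit $f$ as a $P$-algebra equivalence.

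No step is genuinely hard here, as the substantive work has already been carried out in the preceding lemmas. The only point requiring a little care is that, following the Joyal-style definition of equivalence, one must supply separate left and right inverses rather than a single two-sided inverse; but homotopy-initiality of $D$ freely produces morphisms $D \to C$, and the relevant composites are forced to agree with the identities by the contractibility of the endomorphism types $\Palg(C,C)$ and $\Palg(D,D)$, so this presents no obstacle.
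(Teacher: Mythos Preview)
Your proposal is correct and follows essentially the same route as the paper's proof: both reduce the contractibility of $\AlgEquiv(C,D)$ to its being an inhabited mere proposition, using contractibility of $\Palg(C,D)$ together with the fact that $\isalgequiv(f)$ is a mere proposition, and then exhibit an inhabitant via the usual initiality argument. The only cosmetic difference is that the paper uses a single $g \co D \to C$ as both the left and right inverse of $f$, whereas you take separate $g$ and $h$; since the definition allows this and the argument is unchanged, this is immaterial.
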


\begin{proof} Let $C$ and $D$ be $P$-algebras. The type $\Palg(C,D)$ is contractible by homotopy-initiality of $C$. Since the dependent sum of a family of mere propositions over a mere proposition is again a mere proposition, it suffices to prove $\iscontr(\isalgequiv(f))$ for any $P$-algebra morphism $f$. This type is a mere proposition, as remarked earlier; thus it suffices to show it is inhabited.
Since $D$ is homotopy-initial, there exists a $P$-algebra morphism $g \co D \to C$. Again by homotopy-initiality of $C$ and 
$D$, we have $\Id(g \circ f, 1_C)$ and $\Id(f \circ g, 1_D)$, which gives us the desired $P$-algebra equivalence between 
$C$ and $D$.
\end{proof}

 The next proposition characterizes
inductive $P$-algebras by means of deduction rules, where we display premisses in multiple lines for
lack of space.

\begin{proposition} \label{thm:palgindrec}
Let $C = (C, \sup_C)$ be a $P$-algebra. Then $C$ is inductive if and only if it satisfies the following rules:

\smallskip

\begin{enumerate}[(i)]
\item the elimination rule, 
\[
\begin{prooftree}
\begin{array}{rcl}
z \co C &  \vdash & E(z) \co \U \\ 
\textstyle
x \co A \, , u  \co B(x) \to C,\, v \co (\Pi y \co B(x))  E(u y) &  \vdash & e(x,u,v) \co E(\sup_C(x,u))
\end{array}
\justifies
z \co C \vdash \elim(z,e) \co E(z)
\end{prooftree}
\]

\bigskip

\item the computation rule,
\[
\begin{prooftree}
\begin{array}{rcl}  
z \co C & \vdash & E(z) \co \U \\ 
\textstyle
x \co A ,\, u \co  B(x) \to C ,\, v \co (\Pi y \co B(x))  E(uy) &  \vdash & e(x,u,v) \co E(\sup_C(x,u))
\end{array}
\justifies
x \co A,\, u \co B(x) \to C
   \vdash 
   \comp(x,u,e) \co
    \Id \big( \elim(\sup_C(x,u),e),  e(x,u, (\lambda y \co B(x)) \elim(u y, e)) \big)\,.
\end{prooftree}
\]
\end{enumerate}
\end{proposition}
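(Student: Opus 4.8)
The plan is to prove both directions by unfolding the definition of $\isalgind(C)$ and matching the two components of a $P$-algebra section against the conclusions of the elimination and computation rules, exactly as in the proof of Proposition~\ref{thm:inductiverules} for bipointed types. Recall that $\isalgind(C) \defeq (\Pi E \co \FibPalg(C)) \PalgSec(C,E)$, that an element of $\FibPalg(C)$ is precisely a pair $(E,e)$ consisting of the two premises displayed in the rules, and that an element of $\PalgSec(C,E)$ is a pair $(f, \bar f)$ with $f \co (\Pi z \co C) E(z)$ and $\bar f \co \Id(f \, \sup_C, e_f)$, where $e_f$ is as in~\eqref{equ:ef}.

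For the forward direction, I would assume $C$ is inductive. Given the premises $E \co C \to \U$ and $e$, these assemble into a fibered $P$-algebra $(E,e) \co \FibPalg(C)$, to which I would apply the assumed section operator to obtain a section $(f, \bar f)$. Setting $\elim(-,e) \defeq f$ then yields the elimination rule. For the computation rule, I would pass from the path $\bar f$ to its associated homotopy $\ext^\Pi \bar f$; as recorded after Definition~\ref{def:fibalgsection}, its component at $x \co A$, $u \co B(x) \to C$ inhabits $\Id(f(\sup_C(x,u)), e(x,u,fu))$, and since $fu = (\lambda y \co B(x)) f(uy) = (\lambda y \co B(x)) \elim(uy, e)$ when $f = \elim(-,e)$, this type is exactly the conclusion of the computation rule. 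I would therefore define $\comp(x,u,e) \defeq (\ext^\Pi \bar f)_{x,u}$.

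For the converse, I would assume the two rules hold and fix an arbitrary $(E,e) \co \FibPalg(C)$. The elimination rule produces $f \defeq \elim(-, e) \co (\Pi z \co C) E(z)$, and the computation rule produces, for all $x$ and $u$, a family of paths $\comp(x,u,e)$, \ie a homotopy between $f \circ \sup_C$ and $e_f$ (after the currying $PC \simeq (\Sigma x \co A)(B(x) \to C)$). Applying the inverse extension function $\int^\Pi$ would collapse this homotopy to a single path $\bar f \co \Id(f \, \sup_C, e_f)$, so that $(f, \bar f) \co \PalgSec(C,E)$ is the required section, witnessing $\isalgind(C)$.

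I expect no step to present a genuine obstacle; the argument will be pure bookkeeping. The only point requiring care will be the passage between the single path $\bar f$, which is the datum of a $P$-algebra section, and the pointwise family $\comp(x,u,e)$, which is the conclusion of the computation rule. This translation is mediated by the extension function $\ext^\Pi$ for $\Pi$-types and its inverse $\int^\Pi$, which are equivalences precisely because $\Hint$ includes the function extensionality axiom; I would also need to curry carefully along $PC \simeq (\Sigma x \co A)(B(x) \to C)$ so that functions out of $PC$ match the doubly-indexed form appearing in the premises of the rules.
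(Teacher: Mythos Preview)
Your proposal is correct and follows exactly the same approach as the paper, which simply states that the rules are an unfolding of the definition of an inductive algebra. Your account is more detailed than the paper's one-line proof, but the content is the same: match the two components of a $P$-algebra section against the conclusions of the two rules, using $\ext^\Pi$ and $\int^\Pi$ to pass between the single path and the pointwise family.
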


\begin{proof} The rules are simply an unfolding of the definition of an inductive algebra.
\end{proof}

\medskip

Below, when working with an inductive $P$-algebra, we will always assume to have constants $\elim$ and $\comp$ as in Proposition~\ref{thm:palgindrec}. We now show  the essential uniqueness of algebra sections of inductive
fibered algebras. 

\begin{proposition} \label{lem:Wetaind}
Let $C = (C, \sup_C)$ be a $P$-algebra. If $C$ is inductive, then it satisfies the following rules:

\begin{enumerate}[(i)]
\item the $\eta$-rule, 
\[
\begin{prooftree}
\begin{array}{rcl} 
 z \co C & \vdash  & E(z) \co \U   \\ 
 \textstyle x \co A,\,    u \co B(x) \to C,\, e \co (\Pi y \co B(x)) E(uy ) & \vdash &  e(x,u,v) \co E(\sup_C(x,u))  \\  
  z \co C & \vdash  & f(z) \co E(z) \\ 
 x \co A \, ,  u \co B(x) \to C  & \vdash  & \phi_{x,u} \co \Id  \big(  f(\sup_C(x,u)) ,  e\big(x,u, f  u ) \big) 
 \end{array}
 \justifies
z \co C \vdash \eta_z \co \Id( f(z),  \elim(z,e))
\end{prooftree}
\]

\bigskip

\item the coherence rule,
\[
\begin{prooftree}
\begin{array}{rcl}
z \co C & \vdash &  E(z) \co \U   \\ 
\textstyle x \co A,\,    u \co B(x) \to C,\, v \co (\Pi y \co B(x))E(uy) &  \vdash & e(x,u,v) \co E(\sup_C(x,u))  \\  
 z \co C &  \vdash & f(z) \co E(z) \\ 
x \co A \, ,  u \co B(x) \to C & \vdash  & \phi_{x,u} \co \Id  \big(  f(\sup_C(x,u)) ,  e\big(x,u, f  u ) \big) 
\end{array}
\justifies
x \co A, u \co B(x) \to C \vdash \bar{\eta}_{x,u} \co
\Id\big( \eta_{\sup_C(x,u)} \ct \comp(x,u,e), \; 
 \phi_{x,u} \cdot e(x,u,\int(\eta_u)) \big)
\end{prooftree}
\]

\end{enumerate}

\end{proposition}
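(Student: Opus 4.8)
The plan is to follow the template of the bipointed case in Proposition~\ref{thm:inductiveuniquesec}, replacing the elimination and computation rules for a bipointed type by their algebra counterparts in Proposition~\ref{thm:palgindrec}. Assume the premisses of the $\eta$-rule. The key idea is to consider the fibered type $F(z) \defeq \Id_{E(z)}(f(z), \elim(z,e))$ over $C$: proving the conclusion of the $\eta$-rule amounts to constructing a section $\eta_z \co F(z)$, and I would do this by equipping $F$ with the structure of a fibered $P$-algebra and then appealing to the elimination rule of Proposition~\ref{thm:palgindrec}. Concretely, I need a structure map $d$ which, given $x \co A$, $u \co B(x) \to C$ and an inductive hypothesis $w \co (\Pi y \co B(x)) F(uy)$, returns an element of $F(\sup_C(x,u)) = \Id(f(\sup_C(x,u)), \elim(\sup_C(x,u), e))$. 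Here $w$ is exactly a pointwise homotopy $fu \sim (\lambda y \co B(x))\elim(uy,e)$, so function extensionality (cf.~\eqref{equ:funext}) yields a path $\int(w) \co \Id(fu, (\lambda y \co B(x))\elim(uy,e))$ between the two \emph{dependent} functions, whose image under the action on paths of $e(x,u,-)$ is a path $e(x,u,\int(w))$ from $e(x,u,fu)$ to $e(x,u,(\lambda y \co B(x))\elim(uy,e))$.

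I would then define $d(x,u,w)$ as the composite path obtained by pasting together $\phi_{x,u} \co \Id(f(\sup_C(x,u)), e(x,u,fu))$, the path $e(x,u,\int(w))$ just described, and the inverse $\comp(x,u,e)^{-1}$ of the computation rule, which connects $e(x,u,(\lambda y \co B(x))\elim(uy,e))$ back to $\elim(\sup_C(x,u),e)$; this is a path $f(\sup_C(x,u)) \to \elim(\sup_C(x,u),e)$ as required. Setting $\eta_z \defeq \elim(z,d)$ then discharges the $\eta$-rule. For the coherence rule, I would apply the computation rule of Proposition~\ref{thm:palgindrec} to the fibered $P$-algebra $(F,d)$: since $\eta_z = \elim(z,d)$, this produces a path $\comp(x,u,d) \co \Id(\eta_{\sup_C(x,u)}, d(x,u,\eta_u))$, where $\eta_u \defeq (\lambda y \co B(x)) \eta_{uy}$. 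By construction $d(x,u,\eta_u)$ is precisely the composite of $\phi_{x,u}$, the path $e(x,u,\int(\eta_u))$, and $\comp(x,u,e)^{-1}$, so composing this identification with $\comp(x,u,e)$ and cancelling $\comp(x,u,e)^{-1} \ct \comp(x,u,e)$ by the groupoid laws yields the required path $\bar{\eta}_{x,u}$.

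The conceptual steps are thus straightforward, and the main obstacle is bookkeeping rather than ideas. In particular, care is needed to pass correctly between the pointwise homotopy $w$ and the path $\int(w)$ between the dependent functions $fu$ and $(\lambda y \co B(x))\elim(uy,e)$, and to track the action of $e(x,u,-)$ on paths so that the instance $\int(\eta_u)$ appearing in the coherence rule matches the one used in defining $d$. The final groupoid-law manipulation, while routine, must be carried out with attention to the orientation and order of the composites, exactly as in the simpler calculation at the end of the proof of Proposition~\ref{thm:inductiveuniquesec}.
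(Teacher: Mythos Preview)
Your proposal is correct and follows essentially the same approach as the paper: define the fibered type $F(z)=\Id_{E(z)}(f(z),\elim(z,e))$, build its structure map as the composite of $\phi_{x,u}$, $e(x,u,\int(w))$, and $\comp(x,u,e)^{-1}$, take $\eta_z\defeq\elim(z,d)$, and then read off the coherence from the computation rule for $(F,d)$ together with the groupoid laws. The only differences from the paper's argument are notational.
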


Before proving the proposition, observe that the paths $\bar{\eta}_{x,u}$ in the conclusion of the
coherence rule can be seen as fitting in the
diagram
\[
\xymatrix@C=3cm{
f(\sup_C(x,u)) \ar[r]^{\eta_{\sup_C(x,u)}} \ar[d]_{\phi_{x,u}} 
\ar@{}[dr]|{\Downarrow \, \bar{\eta}_{x,u}} & \elim(\sup_C(x,u), e))    \ar[d]^{\comp(x,u,e)} \\
 e(x,u,fu) \ar[r]_-{e(x,u, \int( \eta_u))} & e(x, u, \elim(x,u, (\lambda y \co B(x)) \, \elim(uy, e) )) }
\]

\begin{proof}[Proof of Proposition~\ref{lem:Wetaind}]  For $z \co C$, let us define $T(z) \defeq \Id \big(   f(z), \elim(z,e))$. With this notation, proving
 the $\eta$-rule amounts to defining $\eta_z \co T(z)$, for $z \co C$. In order to do so, we apply
the elimination rule for $C$. We need to show that, for $x \co A$, $u \co B(x) \to C$ and~$v \co 
(\Pi y \co B(x)) \, T(uy)$, there 
is 
\[
t(x,u,v) \co   T(\sup_C(x,u)) \, .
\]
Note that $(\lambda y \co B(x)) \, v_{uy}$ is a homotopy between $fu$ and $(\lambda y \co B(x)) \, \elim(uy, e)$.
Hence, we have a corresponding path $\int(v_u)$. We can construct the required path  as follows:
\begin{align*}
f(\sup_C(x,u)) &\iso e\big(x,u , f u \big)   \by{\phi_{x,u}}\\
	&\iso e\big(x,u, (\lambda y \co B(x) )\, \elim(u y ,e) \big) \by{\int( v_u)} \\
	& \iso \elim(\sup_C(x,u),e) \by{\comp(x,u,e)^{-1}}.
\end{align*}
For $z \co C$, we can then define
\[
\eta_z \defeq \elim(z,t) \, .
\] 
For $x \co A$ and $u \co B(x) \to C$, the  computation rule of Proposition~\ref{thm:palgindrec} then gives us
\[
 \eta_{\sup_C(x,u)} \iso \phi_{x,u} \cdot e(x,y,\int(  \eta_u ))  \cdot  \comp(x,u,e)^{-1} \, .
\]
The path required to prove  the coherence rule is then obtained using the groupoid laws.
\end{proof}

\begin{corollary} For every $P$-algebra $C$, the type $\isind(C)$ is a mere proposition.
\end{corollary}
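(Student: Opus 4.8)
The plan is to mirror exactly the proof of the bipointed analogue, Proposition~\ref{thm:isbipindisprop}. First I would invoke the standard fact that a type $X$ is a mere proposition as soon as the type $X \to \isprop(X)$ is inhabited; this lets me assume that $\isind(C)$ is inhabited, that is, that $C$ is itself inductive, and carry out the remaining argument under this hypothesis. Unfolding the definition gives $\isind(C) \defeq (\Pi E \co \FibPalg(C)) \PalgSec(C,E)$, which is a dependent product. Since a dependent product of a family of mere propositions is again a mere proposition, it then suffices to show that the fiber $\PalgSec(C,E)$ is a mere proposition for each fibered $P$-algebra $E$.

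For this last step I would fix two $P$-algebra sections $f, g \co \PalgSec(C,E)$ and construct a path in $\Id(f,g)$. The essential uniqueness of algebra sections established in Proposition~\ref{lem:Wetaind} supplies, for any section, an $\eta$-path to the canonical section $\elim(z,e)$ together with its coherence datum; concretely this exhibits a $P$-algebra section homotopy from $f$ to the $\elim$-section and, likewise, one from $g$ to it. Composing one with the inverse of the other yields a section homotopy $\alpha \co \AlgSecHot(f,g)$. Finally, Lemma~\ref{lem:fibhomeqid} converts this homotopy back into a path $p \co \Id(f,g)$, since the extension map there is an equivalence of types. This shows that $\PalgSec(C,E)$ is a mere proposition and hence completes the argument.

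I expect no genuine obstacle, since all the heavy lifting has already been done in Proposition~\ref{lem:Wetaind} and Lemma~\ref{lem:fibhomeqid}. The one point that requires a little care is the passage from the $\eta$/coherence data—which directly compares a given section only with the distinguished $\elim$-section—to a homotopy between two \emph{arbitrary} sections $f$ and $g$. This step uses that $P$-algebra section homotopies admit composition and inversion, i.e.\ that they satisfy analogues of the groupoid laws; this is legitimate precisely because Lemma~\ref{lem:fibhomeqid} identifies $\AlgSecHot(f,g)$ with the identity type $\Id(f,g)$, so the groupoid structure of identity types transports across the equivalence.
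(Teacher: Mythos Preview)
Your proposal is correct and follows essentially the same route as the paper, which simply says the proof is analogous to that of Proposition~\ref{thm:isbipindisprop}. You are in fact slightly more explicit than the paper's bipointed argument: where the paper just asserts that Proposition~\ref{thm:inductiveuniquesec} yields a homotopy between any two sections, you spell out that the $\eta$-rule compares each section only to the distinguished $\elim$-section and that one must compose and invert to compare $f$ with $g$ directly, justifying this via the equivalence of Lemma~\ref{lem:fibhomeqid}.
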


\begin{proof} Analogous to that of Corollary~\ref{thm:isbipindisprop}.
\end{proof} 

\subsection*{Homotopy-initial algebras}
Exactly as in the case of bipointed types, the hypothesis that a $P$-algebra $C$ is inductive allows us to show that for any
$P$-algebra $D$, there is a $P$-algebra morphism $f \co C \to D$ which is unique up to a $P$-algebra path, itself is unique up 
to a higher path, which in turn is unique up to a yet higher path, and so on.  As before, we shall characterize this kind of universal property
using the notion of a homotopy-initial $P$-algebra, which we define next.

\begin{definition}\label{def:AlgInit}
Let $C = (C, \sup_C)$ be a $P$-algebra. We say that $C$ is  \emph{homotopy-initial}  if for any $P$-algebra 
$D = (D, \sup_D)$, the type $\Palg(C,D)$ of $P$-algebra morphisms from $C$ to $D$
is contractible, \ie the following type is inhabilited
\[
\isalghinit(C) \defeq
 (\Pi D \co \Palg)  \, \iscontr \big( \Palg(C,D) \big)  \, .
\]  
\end{definition}

We stress again that homotopy-initiality is a purely type-theoretic notion. Also note that, exactly as for 
homotopy-initiality of bipointed types, for a $P$-algebra $C$, the type
$\isalghinit(C)$ is a mere proposition. 
We have the following type-theoretic analogue of Lambek's lemma, which will be used in the proof of Proposition \ref{thm:Whlevel} below.

\begin{lemma}\label{lem:IntLambek} Let $C = (C, \sup_C)$ be a $P$-algebra. 
If $C$ is homotopy-initial, then the structure map $\sup_C \co PC \to C$ is an equivalence.
\end{lemma}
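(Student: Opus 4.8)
The plan is to replicate the classical proof of Lambek's lemma, reading isomorphisms as equivalences and the uniqueness of morphisms as contractibility of the relevant hom-types. First I would observe that $(PC, P\sup_C)$ is itself a $P$-algebra, with structure map $P\sup_C \co P(PC) \to PC$. By the assumed homotopy-initiality of $C$, the type $\Palg(C, (PC, P\sup_C))$ is contractible, hence inhabited; I fix a $P$-algebra morphism $(f, \bar{f}) \co (C, \sup_C) \to (PC, P\sup_C)$, so that $f \co C \to PC$ comes equipped with a path $\bar{f} \co \Id(f \circ \sup_C, P\sup_C \circ Pf)$. The candidate inverse for $\sup_C$ is precisely this function $f$.

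Next I would note that $\sup_C \co PC \to C$ is itself a $P$-algebra morphism from $(PC, P\sup_C)$ to $(C, \sup_C)$: the required path $\Id(\sup_C \circ P\sup_C, \sup_C \circ P\sup_C)$ is just reflexivity, since $P(\sup_C)$ and $P\sup_C$ denote the same function. Composing the two algebra morphisms produces an algebra endomorphism $\sup_C \circ f \co (C, \sup_C) \to (C, \sup_C)$. Since $C$ is homotopy-initial, the type $\Palg(C, C)$ of algebra endomorphisms is contractible, and it contains both $\sup_C \circ f$ and the identity algebra morphism $1_C$; thus these are propositionally equal as algebra morphisms, and projecting to the underlying functions yields a homotopy $\sup_C \circ f \sim 1_C$.

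For the other composite I would argue using $\bar{f}$ together with the pseudo-functoriality of $P$ from~\eqref{equ:pseudofunP}. From $\bar{f}$ we obtain $f \circ \sup_C \sim P\sup_C \circ Pf$, and by $\phi_{f, \sup_C}$ the right-hand side is homotopic to $P(\sup_C \circ f)$. Applying $P$ (via the action of functions on paths) to the homotopy $\sup_C \circ f \sim 1_C$ established above gives $P(\sup_C \circ f) \sim P(1_C)$, and $\phi_C$ identifies $P(1_C)$ with $1_{PC}$. Chaining these paths produces a homotopy $f \circ \sup_C \sim 1_{PC}$.

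Having exhibited $f$ as a two-sided homotopy inverse of $\sup_C$, I would conclude by the characterization recalled in Section~\ref{sec:bac} that a function possessing a two-sided inverse is an equivalence, whence $\sup_C$ is an equivalence. The main obstacle is the bookkeeping in the second composite: one must carefully thread the pseudo-functoriality paths $\phi_{f,\sup_C}$ and $\phi_C$ and the action of $P$ on paths, since these hold only propositionally rather than judgementally. No step is conceptually deep once the inverse $f$ has been extracted from homotopy-initiality, but the propositional (rather than strict) nature of functoriality means the groupoid laws must be invoked to assemble the final homotopies.
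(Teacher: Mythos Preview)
Your proof is correct and follows essentially the same approach as the paper's: obtain an algebra morphism $f \co C \to PC$ from homotopy-initiality, use contractibility of $\Palg(C,C)$ to identify $\sup_C \circ f$ with $1_C$, and then derive $f \circ \sup_C \sim 1_{PC}$ via $\bar{f}$ and the pseudo-functoriality of $P$. The only minor difference is that you make explicit the fact that $\sup_C$ is itself a $P$-algebra morphism from $(PC, P\sup_C)$ to $(C, \sup_C)$, which the paper uses implicitly.
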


\begin{proof} This is a straightforward translation of the standand category-theoretic proof, but we provide
some details to illustrate where the contractibility condition in the definition of a homotopy-initial algebra is
used. For brevity, let us write $s \co PC \to C$ for the structure map of $C$. 

 We wish to construct a quasi-inverse to $s \co PC \to C$. In order to do so, we use the homotopy-initiality
of $C$. First of all, observe that $PC$ can be made into a $P$-algebra by considering the structure map 
$Ps \co PPC \to PC$. Thus, by the contractibility of the type $\Palg(C, PC)$, there exists a $P$-algebra
morphism $(t, \bar{t}) \co C \to PC$. We represent it as the diagram
\begin{equation*}
\xymatrix@C=1.5cm{
PC \ar[d]_{Pt} \ar[r]^{s} \ar@{}[dr]|{\Downarrow \bar{t}}& C \ar[d]^{t} \\
PPC \ar[r]_{Ps} & PC}
\end{equation*}
Now, the composite $s  t \co C \to C$ and the identity $1_C \co C \to C$ are both $P$-algebra
morphisms and so, by the contractibility of $\Palg(C,C)$, there has to be a path $p \co \Id(s\circ t ,1_C)$. 
Using this fact, we can also show that there is a path $q \co \Id(t  s, 1_{PC})$. Indeed, we have
\[
t \circ s  \iso Ps \circ Pt 
 \iso P(s \circ t) 
 \iso P(1_C) 
  \iso 1_{PC} \, ,
\]
where the first path is given by $\bar{t}$, the second by the pseudo-functoriality of $P$, as in~\eqref{equ:pseudofunP},
the third is the path $p$ constructed above, and the fourth one is given again by the pseudo-functoriality of~$P$,
as in~\eqref{equ:pseudofunP}.
\end{proof}

\begin{proposition} \label{thm:recursiveW}
A $P$-algebra $C = (C, \sup_C)$  is  homotopy-initial if and only if it satisfies the following rules:

\medskip

\begin{enumerate}[(i)]
\item the recursion rule,
\[
\begin{prooftree}
D \co \U \qquad 
x \co A \, ,  u \co B(x) \to D \vdash \sup_D(x,u) \co D 
\justifies
z \co C \vdash \rec(z,\sup_D) \co D
\end{prooftree}
\]
\item the $\beta$-rule,
\[
\begin{prooftree}
D \co \U \qquad 
x \co A \, ,  y \co B(x) \to D \vdash \sup_D(x,y) \co D 
\justifies
x \co A,\, u \co B(x) \to D \vdash
 \beta(x,u ,\sup_D) \co \Id \big( \rec(\sup_C(x,u),\sup_D) \, ,  \sup_D\big(x, (\lambda y \co B(x)) \,  \rec(u y, \sup_D)\big) \big)
\end{prooftree}
\]

\item the $\eta$-rule,
 \smallskip
\[
\begin{prooftree}
\begin{array}{rcl}
 & & D \co \U  \\ 
x \co A,\, u \co B(x) \to D  & \vdash &  \sup_D(x,u) \co D \\ 
z \co C  & \vdash & f(z) \co D \\ 
x \co A, u \co B(x) \to D  & \vdash  & \phi_{x,u} \co \Id( f(\sup_C(x,y)),  \sup_D(x , f u )) 
\end{array}
\justifies
z  \co A \vdash \eta_z \co \Id( f(z) , \rec(z,\sup_D))
\end{prooftree}
\]

\item the $(\beta, \eta)$-coherence rule, 
\[
\begin{prooftree}
\begin{array}{rcl}
& & D \co \U  \\ 
x \co A,\, u \co B(x) \to D &  \vdash & \sup_D(x,u)  \co D \\ 
 z \co C & \vdash & f(z) \co D \\ 
x \co A, u \co B(x) \to D & \vdash & \phi_{x,u} \co \Id( f(\sup_C(x,u)) , \sup_D(x, f \circ u ))
\end{array}
\justifies
x \co A, u \co B(x) \to C \vdash 
\bar{\eta}_{x,u} \co \Id( \beta(x,u,\sup_D) \ct \eta_{\sup_C(x,u)} \,  , 
 \sup_D(x,\int(\eta_u)) \ct \phi_{x,u}   )
\end{prooftree} \smallskip
\]
\end{enumerate}

\end{proposition}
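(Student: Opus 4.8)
The plan is to prove Proposition~\ref{thm:recursiveW} by unfolding the definition of homotopy-initiality, in direct parallel with the bipointed case treated in Proposition~\ref{thm:hinitrules}. By Definition~\ref{def:AlgInit}, $C$ is homotopy-initial exactly when $\Palg(C,D)$ is contractible for every $P$-algebra $D=(D,\sup_D)$. Since contractibility of a type amounts to producing a centre together with a path to it from every element, I would read the four rules as encoding precisely this data: the recursion rule and the $\beta$-rule jointly produce the centre, while the $\eta$-rule and the $(\beta,\eta)$-coherence rule jointly produce the contraction, with Lemma~\ref{IdEqHo} mediating between paths of algebra morphisms and $P$-algebra homotopies.

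For the forward direction, suppose $C$ is homotopy-initial and fix $D$. The centre of contraction is a $P$-algebra morphism $(\rec(-,\sup_D),\bar r)\co C\to D$; its underlying function is the recursion rule, and unfolding $\ext\,\bar r$ into pointwise components yields exactly the $\beta$-rule, after identifying the precomposite $\rec(-,\sup_D)\circ u$ with $(\lambda y)\,\rec(uy,\sup_D)$. For the remaining two rules, suppose given a function $f$ together with the family $\phi_{x,u}$, which by function extensionality assembles into a path $\bar f$ making $(f,\bar f)$ a $P$-algebra morphism. Contractibility then supplies a path $\Id\big((f,\bar f),(\rec(-,\sup_D),\bar r)\big)$, and Lemma~\ref{IdEqHo} transports it to a $P$-algebra homotopy, whose underlying homotopy is the family $\eta_z$ of the $\eta$-rule and whose coherence component is the higher path $\bar\eta_{x,u}$ of the coherence rule.

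For the converse, assume the four rules and fix $D$. The recursion rule gives the underlying function of the centre, and assembling the $\beta$-components by the inverse extension function $\int$ for $\Pi$-types produces the path $\bar r$, so that $(\rec(-,\sup_D),\bar r)\co\Palg(C,D)$. To show this is a centre of contraction, take an arbitrary $(f,\bar f)\co\Palg(C,D)$: the $\eta$-rule produces a homotopy $\eta\co\Hot(f,\rec(-,\sup_D))$ and the coherence rule produces the datum $\bar\eta$, and together these constitute a $P$-algebra homotopy from $(f,\bar f)$ to the centre. Applying Lemma~\ref{IdEqHo} in the reverse direction converts this into a path $\Id\big((f,\bar f),(\rec(-,\sup_D),\bar r)\big)$, so every element of $\Palg(C,D)$ is propositionally equal to the centre, establishing contractibility.

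The bulk of the argument is formal, and the one genuinely delicate point is the precise matching, under the extension equivalences, between the packaged path data $\bar f,\bar r$ and the pointwise data $\phi_{x,u}$ and the $\beta$- and coherence-components. Concretely, the hard part will be checking that the coherence square defining a $P$-algebra homotopy—the square relating $\alpha_{\sup_C(x,u)}$, $(\ext\,\bar f)_{x,u}$, $(\ext\,\bar g)_{x,u}$, and $\sup_D(x,\int(\alpha_u))$—specializes, with $\alpha=\eta$ and $(\ext\,\bar g)_{x,u}=\beta(x,u,\sup_D)$ and $(\ext\,\bar f)_{x,u}=\phi_{x,u}$, to exactly the equation $\Id\big(\beta(x,u,\sup_D)\ct\eta_{\sup_C(x,u)},\,\sup_D(x,\int(\eta_u))\ct\phi_{x,u}\big)$ appearing in the conclusion of the coherence rule. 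Once this correspondence is in place, Lemma~\ref{IdEqHo} does the real work and both implications follow.
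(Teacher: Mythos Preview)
Your proposal is correct and follows essentially the same approach as the paper: both identify the recursion and $\beta$-rules as producing the centre of contraction $(r,\bar r)$, and the $\eta$- and coherence rules as yielding a $P$-algebra homotopy which Lemma~\ref{IdEqHo} converts into the required path. The paper's proof is terser---it only spells out the direction from the rules to homotopy-initiality and treats the converse as implicit in the unfolding---but your more explicit treatment of both directions and your flagging of the coherence-square matching are entirely in the same spirit.
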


\begin{proof} The rules can be read as follows. The recursion rule says that, given any type $D$ together 
with the function $\sup_D \co PD \to D$, \ie any $P$-algebra, there is a function $r \co C \to D$
defined by letting,  for $z \co C$, $r(z) = \rec(z, \sup_D)$.  The $\beta$-rule implies that we have a homotopy 
$\beta \co \Hot( r \circ \sup_C \, , \sup_D \circ Pr)$ and so, by Proposition~\ref{lem:fibhomeqid}, we get a path $\bar{r}$ fitting in the diagram
\[
\xymatrix@C=1.5cm{
PC \ar[d]_{r} \ar[r]^{\sup_C} \ar@{}[dr]|{\Downarrow \, \bar{r}} & C \ar[d]^{r} \\
PD \ar[r]_{\sup_D} & \; D \, .}
\]
We therefore obtain a $P$-algebra morphism  $(r, \bar{r}) \co C \to D$.  The 
$\eta$-rule says that if $f \co C \to D$ is a $P$-algebra morphism, then there is a homotopy $\eta \co
\Hot( f, r)$.  
And the $(\beta,\eta)$-compatibility rule says that $\eta$ is in fact a $P$-algebra homotopy. Using again 
Proposition~\ref{lem:fibhomeqid}, this shows that there is a path from $(r, \bar{r})$ to $(f, \bar{f})$, thus
proving the contractibility of $\Palg(C,D)$.
\end{proof}

\begin{remark} As for bipointed types, the special case of the rules in Proposition~\ref{thm:recursiveW} 
obtained by considering $C = D$ and $f = 1_C$ provides some explanation for the terminology used to denote them. 
By the recursion rule, we obtain a function $r \co C \to C$ defined by $r = (\lambda z \co C) \rec(z, \sup_C)$. 
The $\beta$-rule gives a homotopy with components $\beta_{x,u} \co \Id( r (\sup_C(x,u), \sup_C( x, r u)$, 
the $\eta$-rule gives a homotopy with components $\eta_z \co \Id( z, r(z))$ and, finally, the $(\beta, \eta)$-coherence rule, gives us a homotopy with components fitting in the diagram
\[
\xymatrix@C=1.5cm{
\sup_C(x,u) \ar[r]^-{\eta_{x,u}} 
\ar@/_1pc/[dr]_{\sup_C(x, \int(\eta_u)) \qquad}
 \ar@{}[dr]|{\qquad \overset{\bar{\eta}_{x,u}}{\Rightarrow}} &
  r(\sup(x,u)) \ar[d]^{\beta_{x,u}} \\ 
 & \sup_C(x, ru) \, .}
 \]
 \end{remark}

We can now state and prove our main result. 

\begin{theorem}\label{thm:WMain} A $P$-algebra is inductive if and only if
it is homotopy-initial, \ie the type
\[ 
(\Pi C \co \Palg) \big( \isalgind(C) \leftrightarrow \isalghinit(C) \big)
\]
is inhabited. 
\end{theorem}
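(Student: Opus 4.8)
The plan is to mirror the proof of Theorem~\ref{thm:bipointedmain}, proving the two implications separately and relying on the rule-based characterisations of inductive and homotopy-initial algebras established above.

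For the implication from inductive to homotopy-initial, suppose $C = (C, \sup_C)$ is inductive. I would observe that the recursion, $\beta$, $\eta$ and $(\beta,\eta)$-coherence rules characterising homotopy-initial algebras in Proposition~\ref{thm:recursiveW} are precisely the special cases of the elimination and computation rules of Proposition~\ref{thm:palgindrec} and the $\eta$ and coherence rules of Proposition~\ref{lem:Wetaind} obtained by restricting to \emph{constant} fibered $P$-algebras, namely those for which $E(z) \defeq D$ for a fixed $P$-algebra $D = (D, \sup_D)$ (so that the fibre map $e$ reduces to $\sup_D$ under the $\Pi\Sigma$-distributivity of~\eqref{equ:ac}). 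Since these rules are provable for inductive $C$, and since by Proposition~\ref{thm:recursiveW} they entail homotopy-initiality, this direction is immediate.

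For the converse, assume $C$ is homotopy-initial and let $E = (E, e) \co \FibPalg(C)$; I must exhibit a section $(s, \bar{s}) \co \PalgSec(C, E)$. Passing to the associated $P$-algebra $E' = (E', \sup_{E'})$ with $E' \defeq (\Sigma z \co C) E(z)$, the first projection $\pi_1 \co E' \to C$ is a $P$-algebra morphism. Homotopy-initiality of $C$ supplies a $P$-algebra morphism $(f, \bar{f}) \co C \to E'$, and composing with $\pi_1$ gives a morphism $\pi_1 f \co C \to C$. Since $1_C$ is also a morphism and $\Palg(C,C)$ is contractible, there is a path in $\Id_{\Palg(C,C)}(\pi_1 f, 1_C)$, which by Lemma~\ref{IdEqHo} yields a $P$-algebra homotopy $(\alpha, \bar{\alpha})$ with $\alpha \co \Hot(\pi_1 f, 1_C)$. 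I would then define the underlying section by transport, setting $s(z) \defeq (\alpha_z)_{!}(\pi_2 f z)$, using that $\alpha_z \co \Id(\pi_1 f z, z)$ and $\pi_2 f z \co E(\pi_1 f z)$.

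The main work, and the anticipated obstacle, is the construction of the coherence path $\bar{s} \co \Id(s \, \sup_C, e_s)$. As in Theorem~\ref{thm:bipointedmain}, I would decompose $\bar{f}$ via the characterisation of identity types in $\Sigma$-types (Section~\ref{sec:chait}) into a base component in $C$ and a fibre component in $E$, and reconcile the base component with $\alpha$ componentwise. The new difficulty compared to the bipointed case is that $\sup_{E'}$ intertwines $\sup_C$ with the fibre map $e$, so the reconciliation must additionally invoke the component $\bar{\alpha}$ of the algebra homotopy, exactly the compatibility datum displayed in the pasting square preceding Lemma~\ref{IdEqHo}. Chaining these propositional equalities using the groupoid laws and the computation of transport along $\alpha$ should produce $\bar{s}$, giving the desired section and completing the proof. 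As noted after Theorem~\ref{thm:bipointedmain}, this argument simplifies drastically in $\Hext$, where $\pi_1 f$ and $1_C$ become judgementally equal.
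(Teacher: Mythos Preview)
Your proposal is correct and follows essentially the same approach as the paper: the forward direction by specialising the inductive rules to constant fibrations, and the converse by building the section from a morphism $f \co C \to E'$ via transport along the algebra homotopy $(\alpha,\bar{\alpha})$ witnessing $\pi_1 f \iso 1_C$. The paper organises the construction of $\bar{s}$ into three explicit claims (factoring $\alpha_{\sup_C(x,u)}$ through $\bar{\alpha}$, using the $\Sigma$-decomposition of $\bar{f}$ for the fibre component, and a transport computation for $e$ along $\sup_C(x,\int(\alpha_u))$), but these are exactly the ingredients you identify.
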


\begin{proof}
Let $C = (C, \sup_C)$ be an inductive $P$-algebra. We wish to show that it is homotopy initial.
For this, it suffices to observe that the rules in Proposition~\ref{thm:recursiveW} characterizing
homotopy-initial algebras are a special case of those given in Proposition~\ref{thm:palgindrec} and Proposition~\ref{lem:Wetaind}.

For the converse, we proceed as in the proof of
Theorem~\ref{thm:bipointedmain}. 
Let $E = (E, e)$ be a fibered algebra over $C$. We need to show that there
exists a $P$-algebra section $(s, \bar{s})$, where $s \co (\Pi x \co C) E(x)$ and 
\[ 
\bar{s} \co (\Pi x \co A)(\Pi u \co B(x) \to C) \Id \big( s(\sup_C(x,u)), e(x, u, s u) \big)
\]
We consider the $P$-algebra $(E', \sup_{E'})$ associated to $E$. Recall that $E' \defeq
(\Sigma z \co C) E(z)$ and $\sup_{E'} \co PE' \to E'$ is defined so that, for $x \co A$ and $u \co B(x) \to E'$, 
we have
\[
\sup_{E'}(x,u) = \big( \sup_C(x,\pi_1  u) \, , e (x,\pi_1  u, \pi_2  u )\big) \, .
\]
In this way, the first projection $\pi_1 \co E' \to C$  is an algebra morphism, represented by the diagram
 \[
 \xymatrix@C=1.5cm{
 PE' \ar[d]_-{P \pi_1} \ar[r]^{\sup_{E'}} \ar@{}[dr]|{\Downarrow \,  \overline{\pi}_1} & E' \ar[d]^{\pi_1} \\
 PC \ar[r]_{\sup_C} & \; C \, .}
 \]
By the homotopy-initiality of $C$, there exists an algebra morphism $(f, \bar{f}) \co (C, \sup_C)  \to (E', \sup_{E'})$,
which we represent with the diagram
\[
\xymatrix@C=1.5cm{
 PC \ar[r]^{\sup_C} \ar[d]_{Pf}  \ar@{}[dr]|{\Downarrow \, \bar{f}} &  C \ar[d]^{f}\\
PE' \ar[r]_{\sup_{E'}}   & \; E' \, .}
\] 
Let $\phi \defeq (\ext \bar{f})$ the homotopy associated to the path $\bar{f}$. 
We write $f_1 \co C \to C$ for the composite $\pi_1 f \co C \to C$, which is a $P$-algebra morphism. The 
path
\[
\xymatrix@C=1.5cm{
PC \ar[d]_{Pf_1} \ar[r]^-{\sup_{C}} \ar@{}[rd]|{\Downarrow \, \overline{f}_1} & C \ar[d]^{f_1} \\ 
PC  \ar[r]_-{\sup_C} & C }
 \]
is given by the pasting diagram
\[
\xymatrix@C=1.5cm{
 PC \ar[r]^{\sup_C} \ar[d]_{Pf}  \ar@{}[dr]|{\Downarrow \, \bar{f}} &  C \ar[d]^{f}\\
PE \ar[r]_{\sup_{E'}}  \ar[d]_{P\pi_1}   \ar@{}[dr]|{\Downarrow \, \bar{\pi_1}} & E' \ar[d]^{\pi_1}  \\
PC \ar[r]_{\sup_C} & \; C  \, .}
\] 
Let $\phi_1 \defeq (\ext \, \bar{f}_1)$ be the homotopy associated to the path $\bar{f_1}$. 
Unfolding the definitions, we have that, for $x \co A,  u \co B(x) \to C$, 
\begin{equation}
\label{equ:smallkey}
(\phi_1)_{x,u} \iso   \pi_1\, \ext^\Sigma \, \phi_{x,u}   \, .
\end{equation}
By the homotopy-initiality of $C$ and Lemma~\ref{IdEqHo}, there exists a $P$-algebra 
homotopy 
\[
(\alpha, \bar{\alpha}) \co \AlgHot(f_1 , 1_C) \, ,
\]
where  $\alpha \co \Hot( f_1 , 1_C)$ and, for $x \co A$, $u \co B(x) \to C$, the path
$\bar{\alpha}_{x,u}$ fits into the diagram
\begin{equation}
\label{equ:alphabar}
{\vcenter{\hbox{\xymatrix@C=2cm{
f_1 \sup_C(x,u)
 \ar[r]^{(\phi_1)_{x,u}} \ar[d]_{\alpha_{\sup_C(x,u)}}  
\ar@{}[dr]|{\Downarrow \, \bar{\alpha}_{x,u}} & \sup_C(x, f_1 u) \ar[d]^{\sup_C(x, \int(\alpha_u))}  \\ 
\sup_C(x,u) \ar[r]_{\bar{1}_{x,u}} & \sup_C(x,u) \, . }}}}
\end{equation}
Here, $\int(\alpha_u )\co \Id( f_1  u \, ,  u)$ is the path associated to the homotopy
$(\lambda y \co B(x)) \alpha_{u y} \co \Hot( f_1 u , u )$. 
We define the required section $s \co (\Pi z \co C) E(z)$ so that, for $z \co C$ we have
\begin{equation*}
s z \defeq (\alpha_z)_{!} \big(  f_2 z \big) \, ,
\end{equation*}
where  $(\alpha_z)_{!} : E(f_1 z) \to E(z)$ is a transport map associated to the path
$\alpha_z \co \Id( f_1 z,  z)$. It now remains to  define, for each $x \co A$ and $u \co B(x)\to C$, 
a path
\[
\bar{s}(x,u) \co \Id\big(  s(\sup_C(x,u)) \, ,  e_s(x,u)  \big) \, ,
\] 
where $e_s$ is defined using the formula in~\eqref{equ:ef}. 
Unfolding the definitions,  our goal is to show that
\begin{equation}\label{eq:proof:thm:WMain:needpath}
(\alpha_{\sup_C(x,u)})_{!} (  f_2 \, \sup_C(x,u) ) \iso 
e(x, u, (\lambda y \co B(x)) (\alpha_{u y})_{!} ( f_2 u y)   )   \, .
\end{equation}
Our goal will follow once we show the following:
\begin{align*}
\textit{Claim 1.} &  \qquad   \alpha_{\sup_C(x,u)} \iso \sup_C(x, \int(\alpha_u))  \ct (\phi_1)_{x,u}   \\ 
\textit{Claim 2.} &  \qquad  ((\phi_1)_{x,u})_{!} (f_2 \, \sup_C(x,u)) \iso e(x,  f_1 u, f_2 u)    \, . \\ 
\textit{Claim 3.} &  \qquad  (\sup_C(x, \int(\alpha_u)))_{!} \, e(x, f_1 u, f_2 u)  \iso 
e\big(x, u, (\lambda y \co B(x)) (\alpha_{uy})_{!}(f_2 u y) \big) \, . 
\end{align*}
Inded, the required propositional equality in~\eqref{eq:proof:thm:WMain:needpath} can then be obtained as follows:
 \begin{alignat*}{2} 
 (\alpha_{\sup_C(x,u)})_{!} ( f_2 \, \sup_C(x,u) ) 
 &  \iso  
 (\sup_C(x, \int(\alpha_u)))_{!} \;  ((\phi_1)_{x,u})_{!}  (f_2 \, \sup_C(x,u) )  
 & \quad &  \text{(by Claim 1)} \\ 
& \iso  
 (\sup_C(x, \int(\alpha_u)))_{!} \;   e(x, f_1 u, f_2 u)  
 & \quad &  \text{(by Claim 2)} \\
&  \iso e(x, u, (\lambda y \co B(x)) (\alpha_{uy})_{!}( f_2 u y))  
     & \quad  & \text{(by Claim 3).} 
  \end{alignat*} 
We conclude by proving the auxiliary claims stated above.

\begin{proof}[Proof of Claim 1.]  This follows by the path in the diagram in~\eqref{equ:alphabar}. \noqed
\end{proof}

\begin{proof}[Proof of Claim 2] Recall that the homotopy $\phi$ has components 
 \[
\phi_{x,u} \co \Id \big( f  \,  \sup_C(x,u) \, ,  \sup_{E'}(x, fu) \big)
\]
Thus, by the characterization of paths in $\Sigma$-types, we have
\[
p  \co \Id( f_1 \sup_C(x,u) , \sup_C(x, f_1 u)) \, , \quad
q \co  
\Id\big( p_{!} ( f_2 \, \sup_C(x,u))    \, ,     e(x, f_1 u, f_2 u) \big) \, ,
 \]   
 where $p \defeq \pi_1 \, \ext^\Sigma \phi_{x,u}$ and $q \defeq \pi_2 \, \ext^\Sigma \, \phi_{x,u}$. 
 The claim now follows by~\eqref{equ:smallkey}. 
 \noqed
\end{proof}

\vspace{-1ex}
 
 \begin{proof}[Proof of Claim 3] 
Observe that for all $a \co A$, $p \co \Id_{B(a) \to C}(t_1, t_2)$  and 
$v \co (\Pi y \co B(a))E(t_1y)$, we have
\[ 
(\sup_C(a,p))_{!}\, e(a,t_1,v)  \iso 
e \big( a, t_2,  (\lambda y \co B(a)) ( (\ext \, p)_y )_{!}\, v y \big) \big)   \, . 
\]
by $\Id$-elimination. If we apply this to $x \co A$, $\int(\alpha_u) \co \Id( f_1 u, u)$, and $f_2 u \co
(\Pi y \co B(x)) E(f_1 u y)$, we get
\[
\big( \sup_C(x,\int(\alpha_u)) \big)_{!} \; e(x, f_1 u , f_2 u) \iso 
e\big(x, u , (\lambda y \co B(x)) \big( (\alpha_{u y} \big)_{!} \;  f_2 u y  \big) \big) 
 \, ,
\]
as required.   \end{proof}  \noqed
\end{proof}

\vspace{-0.8cm}

\begin{corollary} For every $P$-algebra $C$, there is an equivalence $\isalgind(C) \simeq \isalghinit(C)$.
\end{corollary}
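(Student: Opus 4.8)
The plan is to follow exactly the template set by the bipointed case (the Corollary following Theorem~\ref{thm:bipointedmain}): Theorem~\ref{thm:WMain} already gives a \emph{logical} equivalence between $\isalgind(C)$ and $\isalghinit(C)$, and I would upgrade this to a genuine equivalence of types by exploiting the fact that both sides are mere propositions. Concretely, I would first record the two inputs. From Theorem~\ref{thm:WMain} we extract a pair of functions $\isalgind(C) \to \isalghinit(C)$ and $\isalghinit(C) \to \isalgind(C)$. Next, both endpoints are mere propositions: the type $\isalghinit(C)$ is a mere proposition, as observed immediately after Definition~\ref{def:AlgInit} (by the same reasoning as Proposition~\ref{thm:isbiphinitishprop}, using that $\iscontr$ is always a mere proposition and that a dependent product of a family of mere propositions is again one), while $\isalgind(C)$ is a mere proposition by the Corollary proved at the end of the preceding subsection.

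The remaining step is the standard observation that a logical equivalence between mere propositions is automatically an equivalence of types. Writing $f \co \isalgind(C) \to \isalghinit(C)$ and $g$ for the map in the other direction, the two composites $g \circ f$ and $1_{\isalgind(C)}$ are both functions into the mere proposition $\isalgind(C)$, hence they agree pointwise up to propositional equality; symmetrically $f \circ g \sim 1_{\isalghinit(C)}$. Thus $f$ admits a two-sided inverse and is therefore an equivalence, which is precisely the asserted $\isalgind(C) \simeq \isalghinit(C)$.

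I do not expect any real obstacle here, since every nontrivial ingredient has already been established: the logical equivalence is Theorem~\ref{thm:WMain}, and the two propositionality facts are in hand. The argument is entirely formal and parallels the bipointed Corollary verbatim. The only point requiring a little care is bookkeeping—citing the correct propositionality statement for each of the two types—but there is no computation to perform.
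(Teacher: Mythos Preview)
Your proposal is correct and follows exactly the paper's approach: the paper's proof is the one-liner ``Theorem~\ref{thm:WMain} gives us a logical equivalence, but both types are mere propositions,'' which is precisely what you spell out in more detail.
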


\begin{proof} Theorem~\ref{thm:WMain} gives us a logical equivalence, but both types are mere propositions.
\end{proof} 

Below, when we assume the rules for $W$-types (as in Table~\ref{tab:wrules}), we always write $W$ for $(\W x \co A) B(x)$.

\begin{corollary}
\label{lem:WInitInt} Assuming the rules for $W$-types, for a $P$-algebra
$C$ the following conditions are
equivalent:
\begin{enumerate}[(i)]
\item $C$ is inductive,
\item $C$ is homotopy initial,
\item $C$ is equivalent to $W$ as a $P$-algebra.
\end{enumerate}
In particular, the type $W$ is a homotopy-initial $P$-algebra. \qed
\end{corollary}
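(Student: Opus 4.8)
The plan is to leverage Theorem~\ref{thm:WMain}, which already supplies the equivalence of conditions~(i) and~(ii), and to interpose the $W$-type itself as a concrete homotopy-initial algebra through which condition~(iii) is threaded. First I would observe that the type $W = (\W x \co A)B(x)$, equipped with the structure map $\sup_W \co PW \to W$ obtained from the introduction rule $\wsup$ by $\Sigma$-elimination (so that $\sup_W(x,u) \deq \wsup(x,u)$), is an \emph{inductive} $P$-algebra. Indeed, unfolding Definition~\ref{def:Wind} via the rule-based characterisation of Proposition~\ref{thm:palgindrec}, the required elimination rule for $W$ is exactly the $W$-elimination rule in Table~\ref{tab:wrules}, and the required (propositional) computation rule is obtained from the judgemental $W$-computation rule of Table~\ref{tab:wrules} by taking $\refl$. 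Hence $W$ is inductive, and so by Theorem~\ref{thm:WMain} it is homotopy-initial; this already establishes the final sentence of the statement.

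With $W$ identified as a homotopy-initial algebra, I would close the cycle of implications as follows. The equivalence (i)~$\Leftrightarrow$~(ii) is precisely Theorem~\ref{thm:WMain}. For (ii)~$\Rightarrow$~(iii), I would apply Proposition~\ref{WHInitIso} to the pair $C$ and $W$: since both are homotopy-initial, the type $\AlgEquiv(C,W)$ is contractible, hence in particular inhabited, so $C$ is equivalent to $W$ as a $P$-algebra. For the converse (iii)~$\Rightarrow$~(ii), I would use that homotopy-initiality is invariant under algebra equivalence: an algebra equivalence $C \simeq W$ induces, by precomposition, an equivalence $\Palg(C,D) \simeq \Palg(W,D)$ for every $P$-algebra $D$, and contractibility transports along equivalences; since $W$ is homotopy-initial, so is $C$. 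This completes the cycle (i)~$\Leftrightarrow$~(ii), (ii)~$\Rightarrow$~(iii)~$\Rightarrow$~(ii), yielding the equivalence of all three conditions.

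The main obstacle, though largely bookkeeping, is the first step: verifying carefully that the premises and conclusions of the $W$-type rules in Table~\ref{tab:wrules} line up with the fibered-algebra data of Definition~\ref{def:fibalg} and the inductive-algebra rules of Proposition~\ref{thm:palgindrec}. In particular one must check that the premise $e(x,u,v) \co E(\wsup(x,u))$ of $W$-elimination is exactly the function datum of a fibered $P$-algebra, and that the definitional equality in the $W$-computation rule legitimately yields the propositional identity $\comp(x,u,e) \co \Id(\ldots)$ demanded of an inductive algebra. Everything else is a direct transport of contractibility along equivalences of mapping-out types, together with the already-proved essential uniqueness of homotopy-initial algebras in Proposition~\ref{WHInitIso}.
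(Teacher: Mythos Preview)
Your proposal is correct and matches the paper's intended argument. The paper marks this corollary with \qed{} and gives no explicit proof, but the surrounding text supplies exactly the ingredients you invoke: $W$ is an inductive $P$-algebra by the standard rules (as noted at the start of the section on inductive algebras), Theorem~\ref{thm:WMain} gives (i)~$\Leftrightarrow$~(ii), Proposition~\ref{WHInitIso} yields (ii)~$\Rightarrow$~(iii), and the paper remarks (in the bipointed case, with the $P$-algebra case being parallel) that homotopy-initiality transports along equivalences, giving (iii)~$\Rightarrow$~(ii).
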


Corollary~\ref{lem:WInitInt} provides the analogue in our setting of the characterization of W-types as a strict initial algebra in extensional type theory. It makes precise the informal idea that, in intensional type theory, W-types are a kind of initial algebra in the weak $(\infty, 1)$-category of types, functions, paths and higher paths.  

\begin{lemma} \label{lem:suppath}
Assuming the rules for $W$-types, 
for all $a_1,a_2 \co A$, $t_1 \co B(x_1) \to W$, $t_2 \co B(x_2) \to W$, there is an equivalence of types
\[ 
\Id_W( \sup_W(a_1,t_1),  \sup_W(a_2,t_2)  ) \simeq  \Id_{PW} \big( (a_1,t_1), (a_2,t_2) \big) \, . 
\]
\end{lemma}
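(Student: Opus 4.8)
The plan is to deduce the statement from the single fact that the structure map $\sup_W \co PW \to W$ is an equivalence of types, which in turn follows from the homotopy-initiality of $W$ via the type-theoretic Lambek lemma. Once $\sup_W$ is known to be an equivalence, the claim is immediate, because equivalences act as equivalences on path spaces.

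First I would invoke Corollary~\ref{lem:WInitInt}: assuming the rules for $W$-types, the $P$-algebra $W$, whose structure map $\sup_W \co PW \to W$ is given by the introduction rule, is homotopy-initial. Applying Lemma~\ref{lem:IntLambek} to the case $C = W$ then yields that $\sup_W \co PW \to W$ is an equivalence of types. Note that the two elements $(a_1,t_1)$ and $(a_2,t_2)$ under consideration are elements of $PW$, since $PW = (\Sigma x \co A)(B(x) \to W)$ and $t_i \co B(a_i) \to W$ (correcting the evident typographical slip in the statement).

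Next I would appeal to the standard fact that any equivalence $f \co X \to Y$ induces, for all $c, d \co X$, an equivalence on identity types
\[
\Id_X(c,d) \simeq \Id_Y(f c, f d),
\]
given by the action of $f$ on paths; this is provable without the univalence axiom, e.g.\ by $\Id$-elimination after reducing $f$ to its two-sided inverse presentation recalled around~\eqref{equ:equivalternative}. Applying this with $f = \sup_W$, $c = (a_1,t_1)$ and $d = (a_2,t_2)$ produces an equivalence
\[
\Id_{PW}\big((a_1,t_1),(a_2,t_2)\big) \simeq \Id_W\big(\sup_W(a_1,t_1), \sup_W(a_2,t_2)\big),
\]
and the asserted equivalence follows by the symmetry of $\simeq$.

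Since the whole argument is essentially a one-line consequence of Lambek's lemma, there is no substantial obstacle. The only point that requires care is the invocation of the fact that an equivalence acts as an equivalence on path spaces; if one wished to avoid citing it as known, one would instead verify directly that the induced map on paths has contractible homotopy fibers, and I expect this to be the sole place where a small amount of genuine work is needed.
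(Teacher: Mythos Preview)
Your proposal is correct and follows exactly the paper's own approach: the paper's proof consists of the single sentence ``By Lemma~\ref{lem:IntLambek} and Corollary~\ref{lem:WInitInt}, $\sup_W \co PW \to W$ is an equivalence,'' leaving implicit precisely the fact about equivalences acting on path spaces that you spell out. You have simply made explicit what the paper takes for granted.
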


\begin{proof}
By Lemma~\ref{lem:IntLambek} and Corollary~\ref{lem:WInitInt}, $\sup_W \co PW \to W$ is an equivalence.
\end{proof}

We  remark that $W$-types  preserve homotopy levels, in the following sense (see \cite{Danielsson:Wtypes}).

\begin{proposition}\label{thm:Whlevel}
Assuming the rules for $W$-types, if $A$ has h-level $n+1$, then so does the $W$-type $(\W x:A)B(x)$.
\end{proposition}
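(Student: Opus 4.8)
The plan is to unfold the definition of h-level and prove the statement by a double induction over the two endpoints. By definition, $W = (\W x \co A) B(x)$ has h-level $n+1$ exactly when $\Id_W(w_1, w_2)$ has h-level $n$ for all $w_1, w_2 \co W$, so I would prove this last statement. First I would induct on $w_1$ using the elimination rule for $W$-types in Table~\ref{tab:wrules}, obtaining $w_1 \deq \sup_W(a_1, t_1)$ and, crucially, an induction hypothesis $h_1$ asserting that $\Id_W(t_1 y, w)$ has h-level $n$ for every $y \co B(a_1)$ and every $w \co W$. I would then induct on $w_2$ with the evident motive, obtaining $w_2 \deq \sup_W(a_2, t_2)$; the induction hypothesis produced at this second stage is not needed and can simply be discarded. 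It thus remains to show that $\Id_W(\sup_W(a_1,t_1), \sup_W(a_2,t_2))$ has h-level $n$.

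Here I would invoke Lemma~\ref{lem:suppath}, which gives an equivalence between this type and $\Id_{PW}\big((a_1,t_1),(a_2,t_2)\big)$; since h-levels are preserved by equivalence, it suffices to treat the latter. Using the characterisation of identity types of $\Sigma$-types from Section~\ref{sec:chait}, I would rewrite it as $(\Sigma p \co \Id_A(a_1,a_2)) \, \Id_{B(a_2)\to W}(p_{!} t_1, t_2)$, and then apply the standard closure of h-levels under $\Sigma$-types (see \cite{VoevodskyV:notts,hott}): it is enough that the base $\Id_A(a_1,a_2)$ and each fibre $\Id_{B(a_2)\to W}(p_{!} t_1, t_2)$ have h-level $n$. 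The base has h-level $n$ directly from the assumption that $A$ has h-level $n+1$ --- this is the only point at which the hypothesis on $A$ is used.

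For the fibres I would remove the transport $p_{!}$ by based path induction: generalising over $t_2$ and eliminating $p \co \Id_A(a_1,a_2)$ reduces the claim to the case $a_2 \deq a_1$, $p \deq \refl(a_1)$, where $p_{!} t_1 \deq t_1$, so that it suffices to show that $\Id_{B(a_1)\to W}(t_1, t_2)$ has h-level $n$ for every $t_2 \co B(a_1)\to W$. By function extensionality (Section~\ref{sec:chait}) this is equivalent to $(\Pi y \co B(a_1)) \, \Id_W(t_1 y, t_2 y)$, and by the closure of h-levels under $\Pi$-types it reduces to each $\Id_W(t_1 y, t_2 y)$ having h-level $n$ --- which is precisely $h_1$ applied to $y$ and $t_2 y$.

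The genuinely delicate point is the organisation of the double induction together with the bookkeeping around the transport $p_{!}$ appearing in the fibre: matching $(p_{!} t_1)\,y$ with an application of the induction hypothesis $h_1$ requires either a transport computation for function types or, as above, a based path induction that absorbs the transport cleanly. Everything else is an assembly of Lemma~\ref{lem:suppath}, the identity-type characterisations of Section~\ref{sec:chait}, and the standard closure of h-levels under $\Sigma$-types, $\Pi$-types and equivalence.
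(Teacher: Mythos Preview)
Your proposal is correct and follows essentially the same route as the paper: a double $W$-induction (discarding the second hypothesis), Lemma~\ref{lem:suppath}, the $\Sigma$-identity characterisation, function extensionality, and the closure of h-levels under $\Sigma$ and $\Pi$. The only cosmetic difference is that the paper places the transport on the second component and computes it explicitly as $u'(p_{!}\,y)$, whereas you place it on the first component and dissolve it by based path induction; both are routine and lead to the same application of the induction hypothesis.
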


\begin{proof}
We need to show that for all $w, w' \co W$ the type $\Id_W(w, w')$ has h-level $n$. We do so applying the elimination
rule for W-types on $w \co W$.  So, let $x \co A, u \co B(x) \to W$ and assume the induction hypothesis 
\begin{itemize}
\item[$(\ast)$] for every $y \co B(x)$, for every $w' \co W$,  the type $\Id_W( uy ,w')$ has h-level $n$, 
\end{itemize}
and show that for every $w' \co W$ the type $\Id(\sup_W(x,u), w')$ has h-level $n$. We apply again the elimination rule for W-types. So, let   $x' \co A$, $u' \co B(x') \to W$ and assume the induction hypothesis (which we do not spell out since we will not need it) and show that $\Id( \sup_W(x,u) , \sup_W(x',u'))$ has h-level $n$. We have
\begin{align*} 
\Id_W(\sup_W(x,u), \sup_W(x',u'))
& \simeq \Id_{PW}((x,u) , (x',u')) \\
& \simeq (\Sigma p \co \Id_A(x, x')) \, \Id_{B(x) \to W}( u , p^*(u') )   \\
& \simeq (\Sigma p \co \Id_A(x,x')) \,  \Id\big(u , (\lambda y \co B(x)) \,  u'( p_{!} \, y)\big) \\
& \simeq (\Sigma p \co \Id_A(x,x')) (\Pi y \co B(x)) \, \Id_W \big( uy  , u'(p_{!} \, y)\big) \, .
\end{align*}
Here, the first equivalence follows by Lemma~\ref{lem:suppath} and the other equivalences follow by standard
properties of the transport functions. Since $A$ has h-level $n+1$ by assumption, we have 
that~$\Id_A(x,x')$ has h-level $n$. Also, for any $p \co \Id_A(x,x')$ and $u \co B(x) \to W$, the 
type~$\Id_W(uy ,  u'(p_{!} \, y))$ has h-level $n$ by the 
induction hypothesis in~$(\ast)$. The claim follows by recalling that the h-levels are closed under arbitrary dependent products and under dependent sums over types of the same h-level. 
\end{proof}

We note that the h-level of $(\W x \co A) B(x)$ does not depend on that of $B(x)$. Furthermore, assuming that we have a unit type $1$, the lemma is no longer true if $n+1$ is replaced by $n$, as the following example illustrates: if $A \defeq 1$ and~$B(x) \defeq 1$, then $(\W x \co A) B(x) \simeq 0$, which is not contractible.

\subsection*{Univalence for algebras} \label{sec:univalencealgebras}
We conclude the paper with some applications of the univalence axiom. The first is that, just as for bipointed 
types, a form of univalence holds also for $P$-algebras, as the next theorem makes precise.

\begin{theorem}\label{thm:Punivalence} Assuming the univalence axiom, 
 the canonical function
\[ 
\ext^\Palg_{C,D} \co \Id \big(C,D\big) \to  \AlgEquiv(C,D) 
\]
is an equivalence for every pair of $P$-algebras $C$ and $D$.
\end{theorem}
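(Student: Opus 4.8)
The plan is to mirror the proof of the bipointed analogue, Theorem~\ref{thm:bipunivalence}, replacing bipointed structure with $P$-algebra structure and using Proposition~\ref{WAlgSpace} where that proof used Proposition~\ref{thm:usemere}. Fix $P$-algebras $C = (C,\sup_C)$ and $D = (D,\sup_D)$. Since $\Palg = (\Sigma C \co \U)(PC \to C)$, the characterization of identity types of $\Sigma$-types from Section~\ref{sec:chait} (applied with base $\U$ and fiber $X \mapsto PX \to X$) supplies an equivalence
\[
\Id_\Palg(C,D) \simeq (\Sigma p \co \Id_\U(C,D)) \, \Id_{PD \to D}(p_{!}(\sup_C), \sup_D) \, .
\]
The first task is therefore to understand the transported structure map $p_{!}(\sup_C) \co PD \to D$.

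The key step is to rewrite the transported equation in terms of $\mathsf{isalghom}$. By $\Id$-elimination on $p$ one shows that $p_{!}(\sup_C)$ is propositionally equal to $(\ext p)\circ\sup_C\circ P((\ext p)^{-1})$, which is precisely the induced structure map described at the beginning of Section~\ref{sec:homta}; at $p = \refl_C$ both sides reduce to $\sup_C$ modulo the pseudo-functoriality paths $\phi_{f,g}, \phi_A$ of~\eqref{equ:pseudofunP}. Right-whiskering by the equivalence $P(\ext p)$ (an equivalence since $\ext p$ is) then converts $\Id((\ext p)\circ\sup_C\circ P((\ext p)^{-1}), \sup_D)$ into $\Id((\ext p)\circ\sup_C, \sup_D\circ P(\ext p)) = \mathsf{isalghom}(\ext p)$. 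This yields a fiberwise equivalence and hence
\[
\Id_\Palg(C,D) \simeq (\Sigma p \co \Id_\U(C,D)) \, \mathsf{isalghom}(\ext p) \, .
\]

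Next I would invoke the univalence axiom, namely that $\ext^\U_{C,D} \co \Id_\U(C,D) \to \Eq(C,D)$ is an equivalence, to replace $\Id_\U(C,D)$ by $\Eq(C,D)$ and $\ext p$ by the underlying equivalence $f$, obtaining $(\Sigma f \co \Eq(C,D))\, \mathsf{isalghom}(f)$. Since $\mathsf{isalghom}(f)$ depends only on the underlying function, a rearrangement of $\Sigma$-types using $\Eq(C,D) = (\Sigma f \co C\to D)\isequiv(f)$ gives
\[
(\Sigma f \co \Eq(C,D))\, \mathsf{isalghom}(f) \simeq (\Sigma (f,\bar{f}) \co \Palg(C,D)) \, \isequiv(f) \, .
\]
Finally, Proposition~\ref{WAlgSpace} provides, for each algebra morphism $(f,\bar{f})$, an equivalence $\isequiv(f) \simeq \isalgequiv(f,\bar{f})$, so the right-hand side is equivalent to $(\Sigma (f,\bar{f})\co \Palg(C,D))\,\isalgequiv(f,\bar{f}) = \AlgEquiv(C,D)$.

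Composing all of these equivalences produces an equivalence $\Id_\Palg(C,D) \simeq \AlgEquiv(C,D)$, and it remains only to verify that this composite agrees up to homotopy with the canonical map $\ext^\Palg_{C,D}$, which follows by tracing $\refl$ through the chain exactly as in the last step of Theorem~\ref{thm:bipunivalence}. I expect the main obstacle to be the middle step: correctly identifying the transported structure map $p_{!}(\sup_C)$ with the conjugate $(\ext p)\circ\sup_C\circ P((\ext p)^{-1})$ and thereby reducing the transported equation to $\mathsf{isalghom}(\ext p)$, since this is where the pseudo-functoriality of $P$ from~\eqref{equ:pseudofunP} must be handled with care. The surrounding rearrangements are routine, and the appeal to Proposition~\ref{WAlgSpace} packages the analogue of the bipointed argument cleanly.
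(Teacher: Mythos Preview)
Your proposal is correct and follows essentially the same route as the paper: characterize paths in the $\Sigma$-type $\Palg$, identify the transported-structure condition with $\mathsf{isalghom}(\ext\,p)$, apply univalence to replace $\Id_\U(C,D)$ by $\Eq(C,D)$, rearrange to $(\Sigma (f,\bar f)\co\Palg(C,D))\,\isequiv(f)$, and finish with Proposition~\ref{WAlgSpace}. The only cosmetic difference is that the paper writes the $\Sigma$-identity as $\Id(\sup_C, p^*(\sup_D))$ and goes pointwise via the $\Pi$-type characterization, whereas you use $\Id(p_!(\sup_C),\sup_D)$ and argue via the conjugate $(\ext p)\circ\sup_C\circ P((\ext p)^{-1})$; both reductions are handled by the same $\Id$-elimination and lead to the same $\mathsf{isalghom}(\ext\,p)$.
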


\begin{proof} 
Let $C = (C,\sup_C)$ and $D= (D,\sup_D)$ be $P$-algebras. By the characterization of paths in $\Sigma$-types, 
$\Id \big( (C,\sup_C) ,  (D,\sup_D) \big)$ can be expressed as the type
\[
(\Sigma p \co \Id(C, D)) \,  \Id \big( \sup_C ,  p^*(\sup_D)  \big) \, .
\]
By path induction on $p$ and the characterization of paths in $\Pi$-types, this type is equivalent to
\[  
(\Sigma p \co \Id(C,D))
(\Pi x \in A) 
(\Pi u \co B(x) \to W)
\Id \big(  (\ext \, p)( \sup_C(x,u)),  \sup_D(x, (\ext \, p ) \circ u) \big) \, , 
\]
where $\ext \co \Id(C,D) \to \mathsf{Equiv}(C,D)$ is the canonical extension function for the identity types of elements
of~$\U$, asserted to be an equivalence by the univalence axiom. Hence, the above type is equivalent to
\[
(\Sigma f \co \mathsf{Equiv}(C,D)) 
(\Pi x  \co A) 
(\Pi y \co B(x) \to W) \, 
\Id \big( f (\sup_C(x,u))  , \sup_D (x, f u) \big) \, .
\]
After rearranging, we get
\[
(\Sigma f \co \Palg ( (C,\sup_C),  (D,\sup_D) ) \, \isequiv( f ) \, .
\]
By Proposition~\ref{WAlgSpace}, this type is equivalent to $\AlgEquiv \big( (C,\sup_C),  (D,\sup_D)\big)$, as desired. Finally, it is not hard to see that the composition of the above equivalences yields, up to a homotopy, the canonical function $\ext$ which is therefore an equivalence, as required.
\end{proof} 

The following corollary, still obtained under the assumption of the univalence axiom, shows that
homotopy-initial algebras are unique up to a unique path.

\begin{corollary}  Assuming the univalence axiom,
homotopy-initial $P$-algebras are unique up to a  contractible type of paths, i.e. the type
\[ 
(\Pi C \co \Palg) (\Pi D \co \Palg) \big( \isalghinit(C) \times \isalghinit(D)  \to 
\iscontr(\Id(C,D)) \big) \, .
\] 
is inhabited.
\end{corollary}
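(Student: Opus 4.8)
The plan is to deduce the corollary immediately from the two preceding results, exactly as was done for bipointed types in the corollary following Theorem~\ref{thm:bipunivalence}. The two ingredients are Proposition~\ref{WHInitIso}, which asserts that homotopy-initial $P$-algebras are unique up to a contractible type of algebra equivalences, and Theorem~\ref{thm:Punivalence}, which---assuming univalence---exhibits the canonical extension function $\ext^\Palg_{C,D} \co \Id(C,D) \to \AlgEquiv(C,D)$ as an equivalence.

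First I would fix arbitrary $P$-algebras $C, D \co \Palg$ together with witnesses of their homotopy-initiality, namely elements of $\isalghinit(C)$ and $\isalghinit(D)$. Feeding these into Proposition~\ref{WHInitIso} produces a proof that the type $\AlgEquiv(C,D)$ is contractible. Next, invoking Theorem~\ref{thm:Punivalence} under the univalence assumption, the function $\ext^\Palg_{C,D}$ is an equivalence between $\Id(C,D)$ and $\AlgEquiv(C,D)$. Since contractibility is transported along equivalences---any type equivalent to a contractible type is itself contractible---we conclude that $\Id(C,D)$ is contractible, which is precisely the assertion to be inhabited after discharging the two hypotheses and the two universal quantifiers.

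The argument involves no genuine obstacle: it is a routine composition of the two deep results already in place, and the only auxiliary fact required is the standard lemma that equivalences preserve contractibility. The entire proof can be stated in a single sentence citing Proposition~\ref{WHInitIso} and Theorem~\ref{thm:Punivalence}, in perfect parallel with the bipointed case.
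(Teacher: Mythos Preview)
Your proposal is correct and matches the paper's own proof essentially verbatim: the paper simply states that the result is an immediate consequence of Theorem~\ref{thm:Punivalence} and Proposition~\ref{WHInitIso}. Your additional remark that contractibility is preserved along equivalences makes explicit the only step left implicit in that one-line justification.
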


\begin{proof} This is an immediate consequence of Theorem~\ref{thm:Punivalence} and Proposition~\ref{WHInitIso}. 
\end{proof}

\section*{Acknowledgements}

We would like to thank Vladimir Voevodsky and Michael Warren for helpful discussions
on the subject of this paper. In particular, the former suggested a simplification of the 
proof that an inductive $P$-algebra is homotopy-initial. \smallskip
 
 \noindent
 \emph{Acknowledgements for Steve Awodey.}  Steve Awodey gratefully acknowledges the support of the National Science Foundation, Grant DMS-1001191 and the Air Force OSR, Grant Number 11NL035 and MURI Grant Number FA9550-15-1-0053. \smallskip
 
 \noindent
 \emph{Acknowledgements for Nicola Gambino.}   
 \begin{enumerate}[(i)]
 \item This work was supported by the National Science Foundation 
under agreement No.\ DMS-0635607. 
\item This material is based on research sponsored by the Air Force Research Laboratory, under agreement 
number FA8655-13-1-3038. \item This research was supported by a grant from the John Templeton Foundation.
\item This research was supported by an EPSRC grant (EP/M01729X/1). 
\end{enumerate}

\smallskip

\noindent
\emph{Acknowledgements for Kristina Sojakova.} The support of CyLab at Carnegie
Mellon under grants DAAD19-02-1-0389 and W911NF-09-1-0273 from the Army
Research Office is gratefully acknowledged.

\bibliographystyle{plain}

\bibliography{references}

\end{document}